\documentclass[a4paper,11pt]{article}

\usepackage{graphicx,enumitem}
\usepackage{color}
\usepackage{latexsym}
\usepackage{amsmath, amssymb, mathrsfs}
\usepackage{bm}
\usepackage{algorithmic}
\usepackage{algorithm}

% NEW PACKAGE ADDED BY FERGY 2021.03.08
\usepackage{soul,xcolor}
\usepackage[colorlinks]{hyperref}
\usepackage{tikz}
\usetikzlibrary{arrows.meta}
\usetikzlibrary{calc}
\usepackage{subcaption} 
\setlength{\belowcaptionskip}{-5pt}
\usepackage{amsfonts,amsmath,amssymb}
\usepackage{amsthm}
\usepackage{geometry}
\usepackage{graphicx} 
\usepackage{xcolor}
\usepackage{hyperref}
\usepackage{footnote}
\usepackage{stackrel}
\usepackage{subcaption}

%% for itembox added by FERGY 2021.5.29
\usepackage[tikz]{mdframed}
\newenvironment{itembox}[1]{\begin{mdframed}[roundcorner=10pt,
  frametitlefont=\normalfont,
  frametitleaboveskip=\dimexpr-0.7\baselineskip,skipabove=\topskip,
  innertopmargin=\dimexpr-0.65\baselineskip,
  innerbottommargin=\dimexpr0.65\baselineskip,
  frametitle={\tikz{\node[anchor=base,rectangle,fill=white] {\strut #1};}}]
}{\end{mdframed}}

\numberwithin{equation}{section} 
\newtheorem{theorem}{Theorem}[section]
\newtheorem{dfn}{Definition}

\newtheorem{prob}{Problem}
\newtheorem{numex}{Numerical Example}
\newtheorem{remark}{Remark}
\newtheorem{proposition}{Proposition}
\newtheorem{lemma}[theorem]{Lemma}
% NEW COMMAND ADDED BY FERGY 2021.03.08

\newcommand{\bb}[1]{\boldsymbol{{#1}}}

\newcommand{\ddn}[1]{\frac{\partial{#1}}{\partial {\nu}}}

 \usepackage{mathpazo}

% \usepackage{mathptmx}      % use Times fonts if available on your TeX system
%
% insert here the call for the packages your document requires
%\usepackage{latexsym}
% etc.
%
% please place your own definitions here and don't use \def but
% \newcommand{}{}
%
% Insert the name of "your journal" with
% \journalname{myjournal}
%

\title{Comoving mesh method for certain classes of moving boundary problems\footnotetext{The work of MK was supported by JSPS KAKENHI Grant Number JP20KK0058. JFTR acknowledges the support from JST CREST Grant Number JPMJCR2014.}}
\author{Yosuke Sunayama$^{\ast}$ \qquad Masato Kimura$^\dagger$ \qquad Julius Fergy T. Rabago$^\ddagger$}

\date{%
	{\footnotesize
	$^{\ast}$Division of Mathematical and Physical Sciences,\\%
         Kanazawa University, Kanazawa 920-1192, Japan\\%
	\texttt{sunayama\_math@stu.kanazawa-u.ac.jp}\\[2ex]%
	%%%
	$^\dagger$Faculty of Mathematics and Physics,\\%
         Kanazawa University, Kanazawa 920-1192, Japan\\%
	\texttt{mkimura@se.kanazawa-u.ac.jp}\\[2ex]%
	%%%
	$^\ddagger$Faculty of Mathematics and Physics,\\%
         Kanazawa University, Kanazawa 920-1192, Japan\\\vspace{-2pt}
        \texttt{rabagojft@se.kanazawa-u.ac.jp,\ jfrabago@gmail.com}}\\[2ex]
    \today
}

\begin{document}

%-----------------------------------------------------------
\maketitle

%-----------------------------------------------------------

\begin{abstract}
  A Lagrangian-type numerical scheme called the ``comoving mesh method'' or CMM is developed for numerically solving certain classes of moving boundary problems which include, for example, the classical Hele-Shaw flow problem and the well-known mean curvature flow problem.
  This finite element scheme exploits the idea that the normal velocity field of the moving boundary can be extended throughout the entire domain of definition of the problem using, for instance, the Laplace operator.
  Then, the boundary as well as the finite element mesh of the domain are easily updated at every time step by moving the nodal points along this velocity field.
  The feasibility of the method, highlighting its practicality, is illustrated through various numerical experiments.
  Also, in order to examine the accuracy of the proposed scheme, the experimental order of convergences between the numerical and manufactured solutions for these examples are also calculated.\\

\textbf{Keywords} {Hele-Shaw problem $\cdot$ quasi-stationary Stefan problem $\cdot$ comoving mesh method $\cdot$ moving boundary problem $\cdot$ free boundary problem} \\

\textbf{Mathematics Subject Classification (2020)} {35R37 $\cdot$ 76D27 $\cdot$ 35R35 $\cdot$ 65Nxx}
\end{abstract}

%-----------------------------------------------------------

\section{Introduction}
\label{intro}
We are interested in the numerical approximation of solutions to certain classes of moving boundary problems for $d$-dimensional ($d = 2,3$) bounded domains that includes, specifically, the so-called single phase Hele-Shaw problem.
The classical Hele-Shaw moving boundary problem seeks a solution to a Laplace's equation in an unknown region whose boundary changes with time.
In the present study, we are actually interested with the more general Hele-Shaw problem that also arises in shape optimization problems.

%%% GENERAL PROBLEM
Let $T>0$ be fixed and $B$ be an open bounded set in $\mathbb{R}^d$ $(d=2,3)$ with a smooth boundary $\partial B$.
For $t\in[0,T]$, consider a larger open bounded set $\Omega(t) \subset \mathbb{R}^d$ containing $\overline{B}$ with boundary $\Gamma(t):={\partial\Omega}(t)$ such that $\partial B \cap \Gamma(0) = \emptyset$ (i.e., $\operatorname{dist}(\partial B, \Gamma(0)) > 0$).
Denote by ${\nu}$ the outward unit normal vector on the boundary of $\Omega(t) \setminus \overline{B}$ as illustrated in Fig. \ref{fig:Fig1}.
Given the functions $f:\mathbb{R}^d \times [0,T] \to \mathbb{R}$, $q_B:\partial B \times [0,T] \to \mathbb{R}$, $\bb{\gamma}:\mathbb{R}^d \times [0,T] \to \mathbb{R}^d$, the constant $\lambda \in \mathbb{R}$, and the initial profile $\Omega_0$ of $\Omega(t)$, with $V_{n}:=V_{n}(x,t)$, $x \in \Gamma(t)$, describing the outward normal velocity of the moving interface $\Gamma(t)$,
we consider the following moving boundary problem:
\begin{prob}
\label{prob:general_Hele-Shaw}
Find $\Omega(t) \supset \overline{B}$ and $u(\cdot,\ t):\overline{\Omega(t)} \setminus B \to \mathbb{R}$ such that
%-----------------------------------------------------------------------------------------------------------------------------------------------------------------------------------------
	%%% PROBLEM 1: HELE-SHAW PROBLEM
	\begin{equation}
	\label{eq:general_Hele-Shaw}
	\left\{\arraycolsep=1.4pt\def\arraystretch{1}
	\begin{array}{rcll}
		-\Delta u					&=&f 		&\quad\text{in $\Omega(t)\setminus \overline{B}$, \quad $t \in [0,T]$},\\
		(1-\alpha)u + \alpha \nabla u \cdot \nu
								&=&q_{B}		&\quad\text{on $\partial B$},\\
		u						&=&0		&\quad\text{on $\Gamma(t)$, \quad $t \in [0,T]$},\\
		V_{n} 					&=&(- \nabla u + \bb{\gamma})\cdot {\nu} +\lambda	&\quad\text{on $\Gamma(t)$, \quad $t \in [0,T]$},\\
		\Omega(0)				&=& \Omega_0,
	\end{array}
	\right.
	\end{equation}
	where $\alpha \in \{0,1\}$.
\end{prob}
%-----------------------------------------------------------------------------------------------------------------------------------------------------------------------------------------
Here, for simplicity, we assume that the boundaries $\partial B$ and $\Gamma(t)$ are smooth, or equivalently, of class $C^{\infty}$.
The topological situation illustrating the above problem is depicted in Fig. \ref{fig:Fig1}.
%%% MODEL ILLUSTRATION
\begin{figure}[htbp]
\centering
	\scalebox{0.33}{\includegraphics{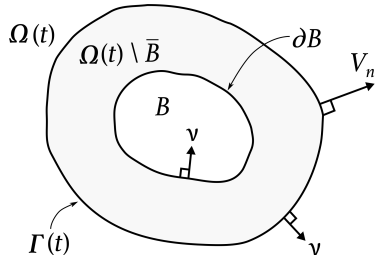}}
	\caption{The moving domain $\Omega(t)$ and fixed domain $B$}
	 \label{fig:Fig1}       % CALL NAME: FIG:MODEL
\end{figure}
%%%
In \eqref{eq:general_Hele-Shaw}, the parameter $\alpha$ indicates whether the boundary condition on the fixed boundary $\partial B$ is a Dirichlet boundary condition $(\alpha = 0)$ or a Neumann boundary condition ($\alpha = 1$).
The fourth equation in \eqref{eq:general_Hele-Shaw} expresses the motion of the free boundary that evolves according to $V_{n} = (- \nabla u + \bb{\gamma})\cdot {\nu}+\lambda$,
where the function $u$ satisfies the first three equations in \eqref{eq:general_Hele-Shaw}.
Here, equation \eqref{eq:general_Hele-Shaw} with $f \equiv 0$, $\bb{\gamma} \equiv \bb{0}$ and $\lambda = 0$ is also known in the literature as the classical Hele-Shaw problem or simply the Hele-Shaw problem (see, e.g., \cite{EscherSimonnet1997}):
%%% PROBLEM 1: HELE-SHAW PROBLEM
Let us discuss more about the case $f \equiv 0$ and $\lambda = 0$ in \eqref{eq:general_Hele-Shaw}.
If $\alpha = 1$, $q_{B} > 0$, and $\bb{\gamma} \equiv 0$, problem \eqref{eq:general_Hele-Shaw} describes a model of the expanding (two-dimensional) Hele-Shaw flow (see, e.g., \cite{Crank1984,ElliottOckendon1982,Elliott1980,ElliottJanovsky1981,Richardson1972}) which provides a simple description either of the flow of a viscous Newtonian liquid between two horizontal plates separated by a thin gap, or of a viscous liquid moving under Darcy's law in a porous medium \cite{CummingsHowisonKing1999} (see also \cite{MilneThomsonBook1996}).
In a typical situation, $u$ represents the pressure in an incompressible viscous fluid blob $\Omega \setminus \overline{B}$, and
because the Neumann flux $q_B$ is positive, more fluid is injected through the fixed boundary $\partial B$.
As a result, the blob expands in time and is modelled by the moving boundary $\Gamma$.
The problem is sometimes formulated with the prescribed pressure (i.e., $q_{B}$ is now interpreted as a given pressure instead of a Neumann flux) on the fixed boundary, i.e., with the non-homogeneous Dirichlet boundary condition $u = q_{B}$ on $\partial B$ (see, e.g., \cite{FasanoPrimecerio1993}).
This situation corresponds to the case $\alpha = 0$ in \eqref{eq:general_Hele-Shaw}.
For further classical applications of \eqref{eq:general_Hele-Shaw} at the current setting, we refer the readers, for example, to \cite{Crank1984,Elliott1980,ElliottJanovsky1981,Friedman1979}.
In the case that $\bb{\gamma} \not\equiv 0$, the given quantity may, in a sense, be interpreted as an (external) background flow.
Here, we do not consider the interesting question of existence of unique \emph{classical} solution to the general problem \eqref{eq:general_Hele-Shaw}, but readers may refer to \cite{EscherSimonnet1997} for existence result in the case of $f \equiv 0$ and $q_{B} > 0$.
Nevertheless, this issue will be the topic of our future investigation.
Meanwhile, results regarding existence of a weak solution to the Hele-Shaw problem via variational inequalities can be found in \cite{Elliott1980,ElliottJanovsky1981,Gustafsson1985}.
Of course, it would be nice if we could actually transform equation \eqref{eq:general_Hele-Shaw} into an elliptic variational inequality formulation such as in the case of the classical Hele-Shaw problem (see \cite{ElliottJanovsky1981}).
However, it seems that such method which employs the so-called Baiocchi transform \cite{BaiocchiBook1984} does not apply directly to our problem due to the presence of the external background flow $\bb{\gamma}$.
Moreover, we emphasize that we are not aware of any existing solution methods to treat the given problem.
So, as in many past studies, this motivates us to at least find an approximate numerical solution to the problem for concrete cases by providing a simple and convenient numerical method in accomplishing the task.

Problem \ref{prob:general_Hele-Shaw} is also related to the Bernoulli free boundary problem.
Suppose now that $f=f(x)$, $\bb{\gamma} \equiv 0$, and $\lambda < 0$, and that the shape solution to \eqref{eq:general_Hele-Shaw} happens to converge to a stationary point as $t$ increases indefinitely, i.e., there exists a domain $\Omega^*$ such that $ V_{n} = 0$ on $\Gamma^* = \partial \Omega^*$, then we call \eqref{eq:general_Hele-Shaw} a generalized \emph{exterior} Bernoulli-like free boundary problem:
%-----------------------------------------------------------------------------------------------------------------------------------------------------------------------------------------
\begin{prob}
Given a negative constant $\lambda$ and a fixed open bounded set $B$, find a bounded domain $\Omega \supset \overline{B}$ and a function $u:\overline{\Omega} \setminus B \rightarrow \mathbb{R}$ such that
	%%% PROBLEM 2: GENERALIZED BERNOULLI PROBLEM
	\begin{equation}
	\label{eq:generallized_Bernoulli_problem}
	\left\{\arraycolsep=1.4pt\def\arraystretch{1}
	\begin{array}{rcll}
		-\Delta u		&=&f 						&\quad\text{in $\Omega\setminus \overline{B}$},\\
		(1-\alpha)u + \alpha \nabla u \cdot \nu
					&=&q_{B}	&\quad\text{on $\partial B$},\\
		u\ =\ 0 \quad \text{and}\quad \nabla u\cdot{\nu}&=& \lambda						&\quad\text{on $\Gamma$}.
	\end{array}
	\right.
	\end{equation}
\end{prob}
%-----------------------------------------------------------------------------------------------------------------------------------------------------------------------------------------

Bernoulli problems find their origin in the description of free surface for ideal fluids (see, e.g., \cite{Friedman1984,Friedrichs1934}).
However, it also arises in the context of optimal design, such as in electro chemical machining and galvanization \cite{Crank1984,LaceyShillor1987}, as well as in insulation problems \cite{Acker1981,Flucher1993}.
For some qualitative properties of solutions to the Bernoulli problem, including existence, classifications, and uniqueness of its solution, and some ideas about numerical approximations of its solutions via fixed-point iterations, we refer the readers to \cite{FlucherRumpf1997}, as well as to the references therein (see also \cite{RabagoThesis2020}).

As mentioned earlier, our main objective in this study is to present a simple numerical scheme for solving the moving boundary problem \eqref{eq:general_Hele-Shaw}.
Of course, there are already several numerical approaches to solve the present problem, especially in the case of the Hele-Shaw flow $V_{n} = -\nabla u \cdot {\nu}$ (with $f \equiv 0$, $\bb{\gamma} \equiv \bb{0}$, $\lambda = 0$, $\alpha = 1$, and $q_{B} > 0$ in \eqref{eq:general_Hele-Shaw}).
In fact, it is well-known that the Hele-Shaw problem can be solved numerically using the boundary element method which was employed, for instance, in \cite{GustafssonVasilev2006}, or the charge simulation method (CSM) applied in \cite{Kimura1997,SakakibaraYazaki2015}.
The latter method can also be used to other two-dimensional moving boundary problems, but is not actually easy to utilized in the case of three-dimensional problems.
To address this difficulty, the authors in \cite{KimuraNotsu2002} proposed an improvement of CSM by combining it with the level-set method.
Still, however, to the best of our knowledge, no convenient and effective numerical approach has yet been developed to numerically solve the more general equation \eqref{eq:general_Hele-Shaw} with $f \not\equiv 0$ and $\bb{\gamma} \not\equiv \bb{0}$.
The purpose of this investigation, therefore, is to fill this gap by developing a numerical method to solve \eqref{eq:general_Hele-Shaw} with the following three main characteristics:
\begin{itemize}[label=$\bullet$]
	\item firstly, as opposed to CSM, our proposed method is easier to implement and can easily treat three-dimensional moving boundary problems without ad hoc procedures;
	%%%
	\item secondly, in contrast to existing traditional finite element methods used to solve many moving boundary problems, our propose scheme does not require mesh regeneration at every time step in the approximation process;
	%%%
	\item and, lastly, our method can easily be adapted to solve other classes of moving boundary problems, such as the mean curvature problem.
\end{itemize}
%%%
The rest of the paper is organized as follows.
In Section \ref{sec:CMM}, we formally introduce and give the motivation behind our proposed method which we termed as the `comoving mesh method'.
We also write out the structure of the numerical algorithm for the method, and then illustrate its applicability in solving the Hele-Shaw problem.
Moreover, we evaluate the correctness and accuracy of the scheme through the method of manufactured solution.
Then, in Section \ref{sec:Bernoulli}, we will discuss how equation \eqref{eq:general_Hele-Shaw} is closely related to the so-called exterior Bernoulli problem in connection with a shape optimization formulation of the said free boundary problem (FBP).
In addition, we numerically solve the FBP using our propose scheme.
Meanwhile, in Section \ref{sec:MCF}, as further application of CMM, we will also apply our method to curve shortening problem, thus showcasing the versatility of the method.
Furthermore, in Section \ref{sec:properties}, we state and prove two simple qualitative properties of the proposed numerical approximation procedure.
Finally, we end the paper by giving out a concluding statement in Section \ref{sec:conclusion} and a brief remark about our future work.
%-----------------------------------------------------------

\section{The Comoving Mesh Method for the Hele-Shaw Problem}
\label{sec:CMM}
This section is mainly devoted to the introduction of the proposed method.
The motivations behind its formulation are also given in this section.
Moreover, the structure of the algorithm that will be used in the numerical implementation of the method is also provided here.
This is followed by a presentation of two simple numerical examples illustrating the applicability of the scheme in solving concrete cases of problem \eqref{eq:general_Hele-Shaw}, one with $\bb{\gamma}(\cdot,t) \equiv \bb{0}$ and the other one with $\bb{\gamma}(\cdot,t) \not \equiv \text{constant} \neq 0$ on $\Gamma(t)$ ($t \in [0,T]$).
To check the accuracy of the proposed scheme, we also examine the error of convergence or EOC of the method with the help of the method of manufactured solutions \cite{SalariKnupp2000}.

\subsection{Idea and motivation behind CMM}
\label{sec:idea}

As alluded in Introduction, the main purpose of the present paper is the development of a simple Lagrangian-type numerical scheme that we call ``comoving mesh method,'' or simply CMM, for solving a class of moving boundary problems.
To begin with, we give out a naive idea of the method.
For simplicity, we set $\alpha = 1$.
Let $T>0$ be a given final time of interest, $N_T$ be a fixed positive integer, and set the time discretization step-size as $\tau := T/N_T$.
For each time-step index $k = 0,1,\cdots,N_T$, we denote the time discretized domain by $\Omega^k \approx \Omega(k \tau)$ (similarly, $\Gamma^k \approx \Gamma(k \tau)$) and the associated time discretized function as $u^k \approx u(\cdot, k \tau)$, $f^k \approx f(\cdot, k \tau)$, $q_B^k \approx q_B(\cdot, k \tau)$, and $\bb{\gamma}^k \approx \bb{\gamma}(\cdot, k \tau)$.
The rest of the notations used below are standard and will only be stressed out for clarity.

After specifying the final time of interest $T>0$ and deciding the value of $N_T \in \mathbb{N}$,
a naive numerical method for the Hele-Shaw problem \eqref{eq:general_Hele-Shaw} consists of the following three steps:
%%% 
\begin{itembox}{{Conventional scheme for \eqref{eq:general_Hele-Shaw}}} 
\vspace{5pt}
At each time $t = k \tau$:
\begin{description}
	\item{\underline{Step 1.}} The first step is to solve $u^k$ over the domain $\Omega^k \setminus \overline{B}$:
	\[
		-\Delta u^k 	=	f^k 		\ \ \text{in $\Omega^k\setminus \overline{B}$},\quad
		\nabla u^k \cdot \nu^k =q_{B}^k	\ \ \text{on $\partial B$},\quad
		u^k			=	0		\ \ \text{on $\Gamma^k$}.
	\]
	%%%
	\item{\underline{Step 2.}} Then, we define the normal velocity of $\Gamma^k$ in terms of the function $u^k$ and the normal vector ${\nu}^k$ to $\Gamma^k$, i.e., we set $V_{n}^k := (-\nabla u^k + \bb{\gamma}^k) \cdot \nu ^k + \lambda$ on $\Gamma ^k$.
	%%%
	\item{\underline{Step 3.}} Finally, we move the boundary along the direction of the velocity field $V_{n}^k$, i.e., we update the moving boundary according to \[\Gamma^{k+1} := \left\{ x+\tau V_{n}^k (x) \nu^k (x) \ \middle\vert \ x \in \Gamma^k \right\}.\]
\end{description}
\end{itembox}
%%%

However, there are two obstacles in the realization of this naive idea in a finite element method (FEM).
The first main difficulty is that if $u^k$ is a piecewise linear function on a triangular finite element mesh, then $V_{n}^k$ only lives in the space $P_0(\Gamma^k_h)$ (here, of course, $\Gamma^k_h$ denotes the exterior boundary of the triangulation $\Omega_h^k \setminus \overline{B_h}$ of the domain $\Omega \setminus \overline{B}$ with the maximum mesh size $h>0$, at the current time step $k$).
Unfortunately, this local finite element space is not enough to uniquely define $V_{n}^k$ on nodal points of the mesh, and, in fact, it must belong to the (conforming piecewise) linear finite element space $P_1(\Gamma^k_h)$ in the third step.
The second one is not actually an impediment in implementing the method, but more of a preference issue in relation to mesh generation.
Typically, moving boundary problems requires mesh regeneration when solved using finite element methods; that is, one needs to generate a triangulation of the domain $\Omega^k \setminus \overline{B}$ at each time step $k$ after the boundary moves.
To circumvent these issues, we offer the following remedies.

We first address the second issue.
In order to avoid generating a triangulation of the domain at every time step, we move not only the boundary, but also the internal nodes of the mesh triangulation at every time step.
By doing so, the mesh only needs to be generated at the initial time step $k=0$.
This is the main reason behind the terminology used to name the present method (i.e., the `comoving mesh' method).
In order to move the boundary and internal nodes simultaneously, we first create a smooth extension ${\bb{w}^k}$ of the velocity field $V_{n}^k{\nu}^k$ into the entire domain $\Omega^k \setminus \overline{B}$ using the Laplace operator.
This is done more precisely by finding ${\bb{w}^k_h} \in P_1(\Omega_h^k\setminus \overline{B_h};\mathbb{R}^d)$ which is a finite element solution to the following Laplace equation:
%-----------------------------------------------------------------------------------------------------------------------------------------------------------------------------------------
	%%% NAIVE EXTENSION
	\begin{equation}
	\label{eq:naive}
	- \Delta {\bb{w}^k} 	=  \bb{0} \ \ \text{in $\Omega_h^k \setminus \overline{B_h}$},\qquad
		{\bb{w}^k} 	=  \bb{0} \ \ \text{on $\partial B_h$},\qquad
 		{\bb{w}^k}		= V_{n}^k {\nu}^k \ \ \text{on $\Gamma_h^k$},
	\end{equation}
%-----------------------------------------------------------------------------------------------------------------------------------------------------------------------------------------
where, we suppose a polygonal domain $\overline{\Omega_h^k}$, at $t=k \tau$, and its triangular mesh $\mathcal{T}_h(\overline{\Omega_h^k}\setminus B_h) = \{ K^k_l \} ^{N_e}_{l=1}$ ($K^{k}_l$ is a closed triangle $(d=2)$, or a closed tetrahedron $(d=3)$), are given,
 and $P_1(\Omega_h^k\setminus \overline{B_h};\mathbb{R}^d)$ denotes the $\mathbb{R}^d$-valued piecewise linear function space on $\mathcal{T}_h(\overline{\Omega_h^k}\setminus B_h)$.
Then, $\Omega_h^{k+1}$ and $\mathcal{T}_h(\overline{\Omega_h^{k+1}}\setminus B_h) = \{ K^{k+1}_l \} ^{N_e}_{l=1}$ are defined as follows:
\begin{eqnarray}
  \label{eq:mesh-update}
	\overline{\Omega^{k+1}_h}\setminus B_h
		&:=&\left \{ x + \tau \bb{w}^k_h(x) \ \middle\vert \
			x \in \overline{\Omega^{k}_h}\setminus B_h \right\}, \\
  \label{eq:triangle-update}
  K^{k+1}_l
    &:=&\left\{ x + \tau \bb{w}^k_h(x) \ \middle\vert \
    	x \in K^{k}_l \right\},
\end{eqnarray}
for all $k = 0,1,\cdots,N_T$, see Fig. \ref{fig:Fig2} for illustration.

\begin{remark}
  If ${\bb{w}^k_h}$ is belongs to $P_2$ or higher order finite element space, then, instead of \eqref{eq:triangle-update}, we set the triangular mesh $\mathcal{T}_h(\overline{\Omega_h^k}\setminus B_h)$ with the set of nodal points $\mathcal{N}^k_h = \{ p^k_j \} ^{N_p}_{j=1}$ :
  \begin{equation}
  \label{eq:node-update}
  \mathcal{T}_h(\overline{\Omega_h^{k+1}}\setminus B_h) :=
  \left\{\arraycolsep=1.4pt\def\arraystretch{1.5}
  \begin{array}{rcll}
    p^{k+1}_j &:=& p^k_j + \tau {\bb{w}^k_h}(p^k_j) \\[0.2em]
    K^{k+1}_l \cap \mathcal{N}^{k+1}_l &=& K^k_l \cap \mathcal{N}^k_l	.
  \end{array}
  \right.
  \end{equation}
\end{remark}
Note that the definition of the (discrete) time evolution of the annular domain $\overline{\Omega} \setminus B$ given in \eqref{eq:mesh-update} clearly agrees with the original characteristic (at least for the interior boundary) of its desired evolution.
This is because the choice of extension for the vector field $V_{n}{\nu}$ fixes the boundary $\partial B$ of the interior domain $B$ throughout the entire time evolution interval $[0,T]$.
It is worth to emphasize that a similar idea is adopted in the so-called \emph{traction method} developed by Azegami \cite{Azegami1994} for shape optimization problems (see also \cite{AzegamiBook2020}).
%%% MOVEMESH ILLUSTRATION
\begin{figure}[htbp]
\centering
        \begin{subfigure}[b]{0.55\textwidth}
                \centering
               \raisebox{-0.5\height}{\resizebox{\textwidth}{!}{\includegraphics{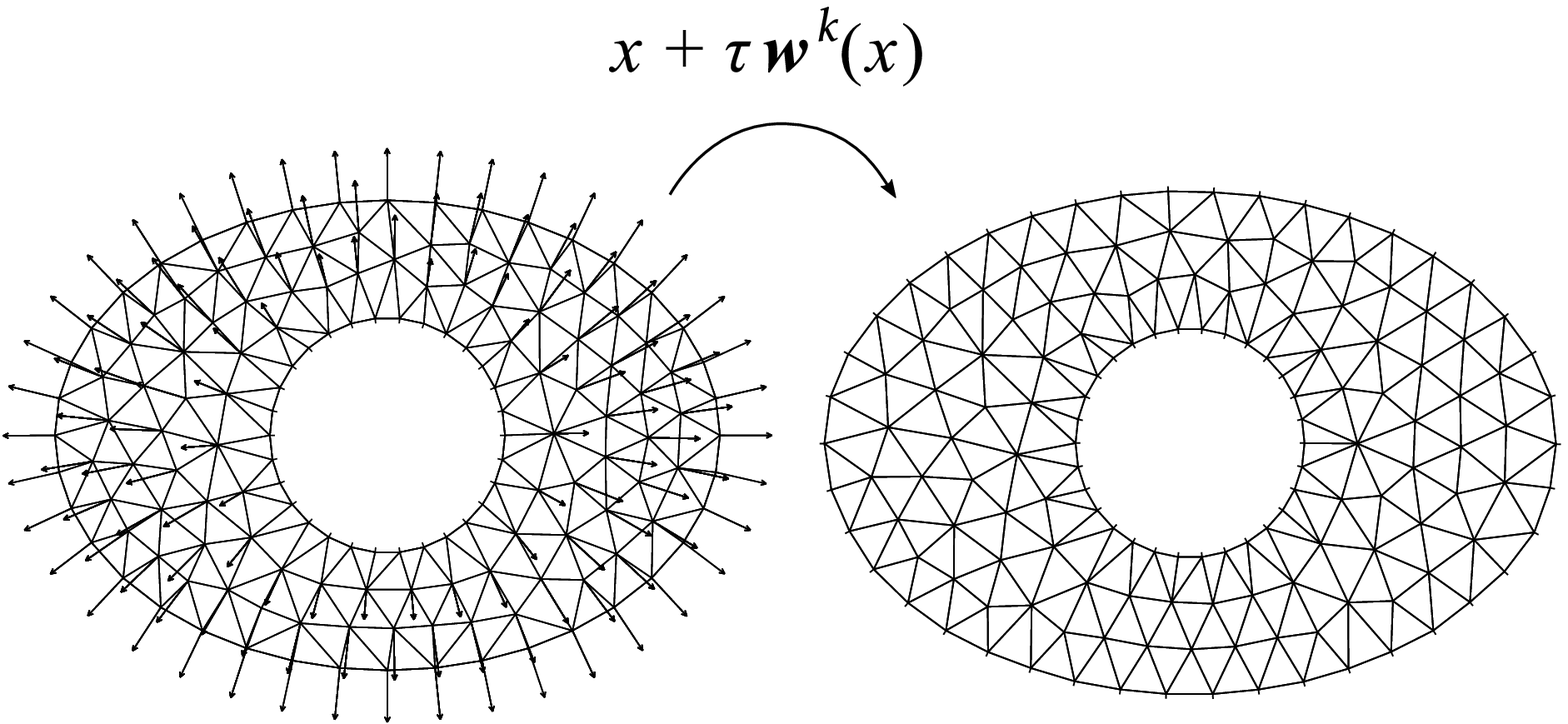}}}
                \caption{Nodal points relocation}
                \label{fig:Fig2a}
        \end{subfigure}%
        \hfill
	 \begin{subfigure}[b]{0.45\textwidth}
                \centering
                 \raisebox{-0.5\height}{\resizebox{0.6\textwidth}{!}{\includegraphics{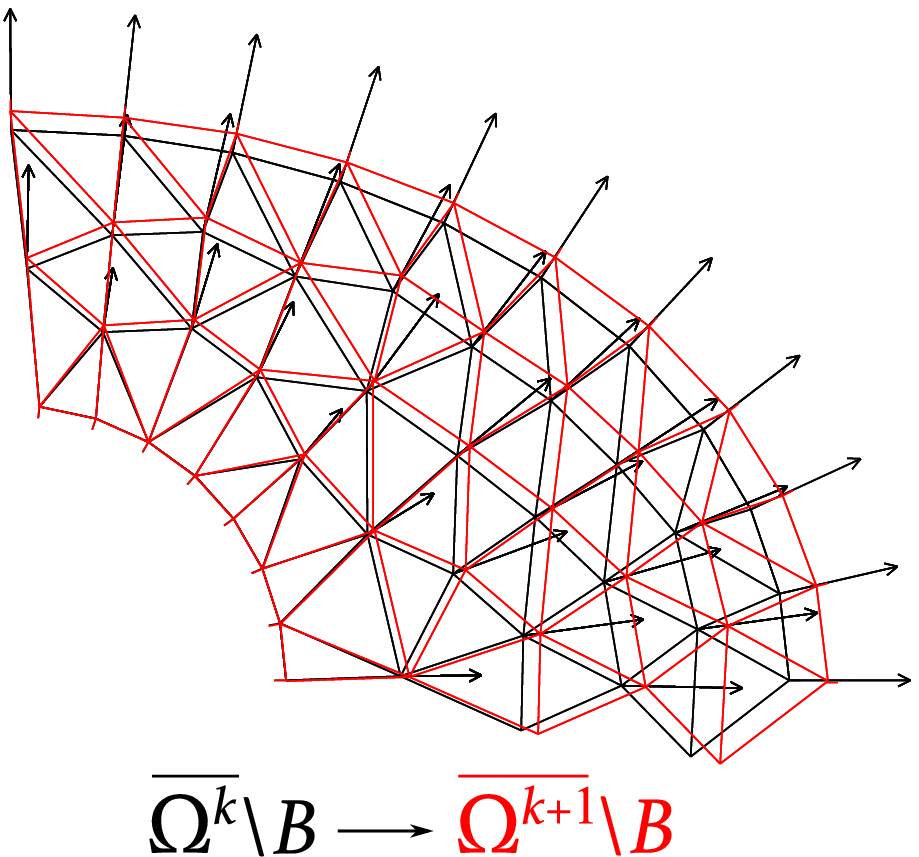}}}
                 \caption{A superimposed sectional illustration}
                \label{fig:Fig2b}
        \end{subfigure}%
\caption{Plot \ref{fig:Fig2a}: initial and deformed mesh after nodes relocation (scaled with the time-step parameter $\tau$ and moved in accordance with the direction of the velocity field $\bb{w}$); plot \ref{fig:Fig2b}: a superimposed comparison of corresponding sections of the domains $\overline{\Omega^{k}} \setminus B$ and $\overline{\Omega^{k+1}} \setminus B$}
\label{fig:Fig2}
\end{figure}
%%%

On the other hand, concerning the first issue, we treat the $P_0(\Gamma^k_h)$-function by using a Robin approximation ${\varepsilon} \nabla {\bb{w}^k} \cdot {\nu}^k + {\bb{w}^k} = V_{n}^k {\nu}^k$ of ${\bb{w}^k} = V_{n}^k {\nu}^k$ in \eqref{eq:naive}, where $\varepsilon > 0$ is a sufficiently small fixed real number.
In other words, given $\varepsilon > 0$, we define ${\bb{w}^k_h} \in P_1(\Omega_h^k\setminus \overline{B_h};\mathbb{R}^d)$ as the finite element solution to the following mixed Dirichlet-Robin boundary value problem:
%-----------------------------------------------------------------------------------------------------------------------------------------------------------------------------------------
	%%% CMM APPROXIMATION
	\begin{equation}
	\label{eq:velocity}
	\left\{\arraycolsep=1.4pt\def\arraystretch{1}
	\begin{array}{rcll}
		- \Delta {\bb{w}^k} 		&=& \bb{0}	&\quad \text{in $\Omega_h^k \setminus \overline{B_h}$},\\[0.3em]
		{\bb{w}^k} 			&=& \bb{0}	&\quad \text{on $\partial B_h$},\\[0.3em]
 		{\varepsilon}  \nabla \bb{w}^k \cdot \nu^k  + {\bb{w}^k} &=& V_{n}^k {\nu}^k		&\quad \text{on $\Gamma_h^k$}.
	\end{array}
	\right.
	\end{equation}
%-----------------------------------------------------------------------------------------------------------------------------------------------------------------------------------------
%
In variational form, the system of partial differential equations \eqref{eq:velocity} is given as follows: find ${\bb{w}^k} \in H^1_{\partial B,\bb{0}}(\Omega_h^k\setminus \overline{B_h};\mathbb{R}^d)$ such that
	\begin{align}
   	&\displaystyle \int_{\Omega_h^k \setminus \overline{B_h}} \nabla {\bb{w}^k} : \nabla \bb{\varphi} \ {\rm d}x
    		 + \frac{1}{{\varepsilon}} \int_{\Gamma_h^k}  {\bb{w}^k} \cdot \bb{\varphi}\ {\rm d}s \nonumber\\
			&\displaystyle \hspace{1in} = \frac{1}{{\varepsilon}} \int_{\Gamma_h^k} V_{n}^k {\nu}^k \cdot \bb{\varphi}\ {\rm d}s,
    				\quad \forall \bb{\varphi} \in H_{\partial B,\bb{0}}^1(\Omega_h^k\setminus \overline{B_h};\mathbb{R}^d), \label{eq:velocity_weakform}
	\end{align}
where $H_{\partial B, \bb{0}}^1(\Omega \setminus \overline{B} ;\mathbb{R}^d)$ denotes the Hilbert space $\{\bb{\varphi} \in H^1(\Omega \setminus \overline{B} ;\mathbb{R}^d) \mid \bb{\varphi} = \bb{0} \ \text{on}\ \partial B\}$.
Obviously, the integral equation in \eqref{eq:velocity_weakform} can be evaluated even for $V_{n}^k {\nu}^k \in P_0(\Gamma_h^k)$.

To summarize the above idea, we provide the following algorithm for the comoving mesh method.
%-----------------------------------------------------------------------------------------------------------------------------------------------------------------------------------------
% ALGORITHM: COMOVING MESH METHOD
\begin{algorithm}[H]
    \caption{Comoving mesh method}
    \label{alg:cmm}
    {\fontsize{9}{10}\selectfont
    \begin{algorithmic}[1]
    \STATE Specify $T>0$, $N_T\in \mathbb{N}$, $\varepsilon>0$, and set $k=0$.
    		Also, generate a finite element mesh of the initial domain $\overline{\Omega_h^0} \setminus B_h  \approx \overline{\Omega^0} \setminus B$.
    \WHILE{$k \leqslant N_T$}
    %%%
    \STATE Solve the finite element solution $u^k_h \in P_1(\Omega_h^k \setminus \overline{B_h})$ for the following:
    \vspace{-1mm}
    \[
        - \Delta u^k = f^k\quad \text{in $\Omega_h^k \setminus B_h$},\qquad
      	\nabla u^k \cdot {\nu}^k = q_B^k\quad \text{on $\partial B_h$},\qquad
        u^k = 0\quad \text{on $\Gamma_h^k$}.
    \]
    \vspace{-3mm}
    %%%
    \STATE Define the normal velocity as $V_{n}^k := (-\nabla u^k_h + \bb{\gamma}^k) \cdot \nu ^k +\lambda$ on $\Gamma_h^k$.
    %%%
    \STATE Create an extension of $V_{n}^k \nu^k$ by solving the finite element solution ${\bb{w}^k_h} \in P_1(\Omega_h^k\setminus \overline{B_h};\mathbb{R}^d)$ for the following:
    \vspace{-1mm}
	\[
        - \Delta {\bb{w}^k} = \bb{0}\ \ \text{in $\Omega_h^k \setminus \overline{B_h}$},\quad
        {\bb{w}^k} = \bb{0}\ \ \text{on $\partial B_h$},\quad
        \varepsilon \nabla \bb{w}^k \cdot \nu^k + {\bb{w}^k} = V_{n}^k {\nu}^k\ \ \text{on $\Gamma_h^k$}.
	\]
	\vspace{-3mm}
    %%%
    \STATE Update the current domain by moving the mesh according to \eqref{eq:mesh-update} and  \eqref{eq:triangle-update}.
%%%    	\[
%%% 	 \overline{\Omega_h^{k+1}} \setminus B := \left\{ x + \tau {\bb{w}^k_h}(x) \ \middle\vert \ x \in \overline{\Omega_h^k} \setminus B \right\},
%%% 	 \quad \text{where $\tau = T/N_T$.}
%%%	\]
%%%	\vspace{-3mm}
    \STATE $k \leftarrow k+1$
    \ENDWHILE
    \end{algorithmic}}%%% END ALGORITHMIC
\end{algorithm}
%--------------------------------

%-----------------------------------------------------------------------------------------------------------------------------------------------------------------------------------------
\subsection{Application of CMM to a classical Hele-Shaw problem}
\label{sec:HS}
In this subsection, we apply the comoving mesh method to solve two concrete examples of problem \eqref{eq:general_Hele-Shaw}.
First, let us consider the classical Hele-Shaw problem:

\begin{equation}
\label{eq:classical_Hele-Shaw}
\left\{\arraycolsep=1.4pt\def\arraystretch{1.1}
\begin{array}{rcll}
  -\Delta u					&=&0	&\quad\text{in $\Omega(t)\setminus \overline{B}$, \quad $t \in [0,T]$},\\
  \nabla u\cdot{\nu} 			&=&1	&\quad\text{on $\partial B$},\\
  u						&=&0	&\quad\text{on $\Gamma(t)$, \quad $t \in [0,T]$},\\
  V_{n} 					&=& - \nabla u\cdot{\nu} 	&\quad\text{on $\Gamma(t)$, \quad $t \in [0,T]$},\\
  \Omega(0)				&=& \Omega_0.
\end{array}
\right.
\end{equation}
Note that, because of the maximum principle and the unique continuation property \cite{HormanderBook}, $u$ is positive in $\Omega\setminus \overline{B}$.
This means that $\nabla u\cdot{\nu} < 0$ on the moving boundary, and, in this case, since the normal velocity $V_{n}$ is always positive, the hypersurface expands.

%%% EXAMPLE 1
\begin{numex}
\label{example1}
	In this example, the initial profile $\Omega_0$ of the moving domain $\Omega(t)$ ($t \in [0, T]$) is given as an ellipse, as shown in Fig. \ref{fig:Fig3a} (along with its mesh triangulation), and the final time of interest is set to $T=2$.
	Algorithm \ref{alg:cmm} is executed using mesh sizes of uniform width $h \approx 0.1$ with parameter value ${\varepsilon} = 0.1$ and time step size $\tau = 0.1$.
\end{numex}
%%% END OF EXAMPLE 1
%%%%%%%%%%%%%%%%%%%%%%%%%%%%%%%%%%%%%%%%%%%%%%%%%%%%%%%%%%%%
%%% FIGURE 3
\begin{figure}[htbp]
\centering
        \begin{subfigure}[b]{0.49\textwidth}
                \centering
                \resizebox{0.9\textwidth}{!}{\includegraphics{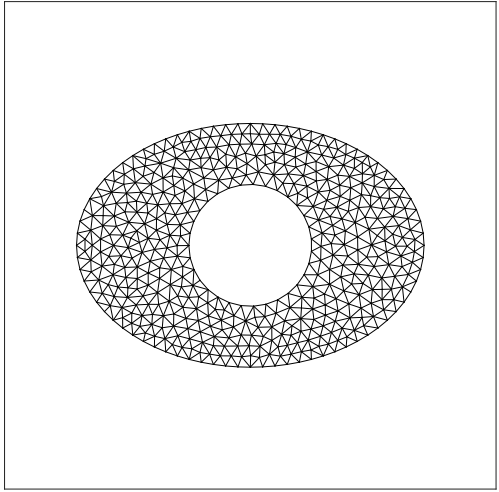}}
                \caption{Initial mesh profile $\Omega_h^0$}
                \label{fig:Fig3a}
        \end{subfigure}%
        \hfill
	 \begin{subfigure}[b]{0.49\textwidth}
                \centering
                \resizebox{0.9\textwidth}{!}{\includegraphics{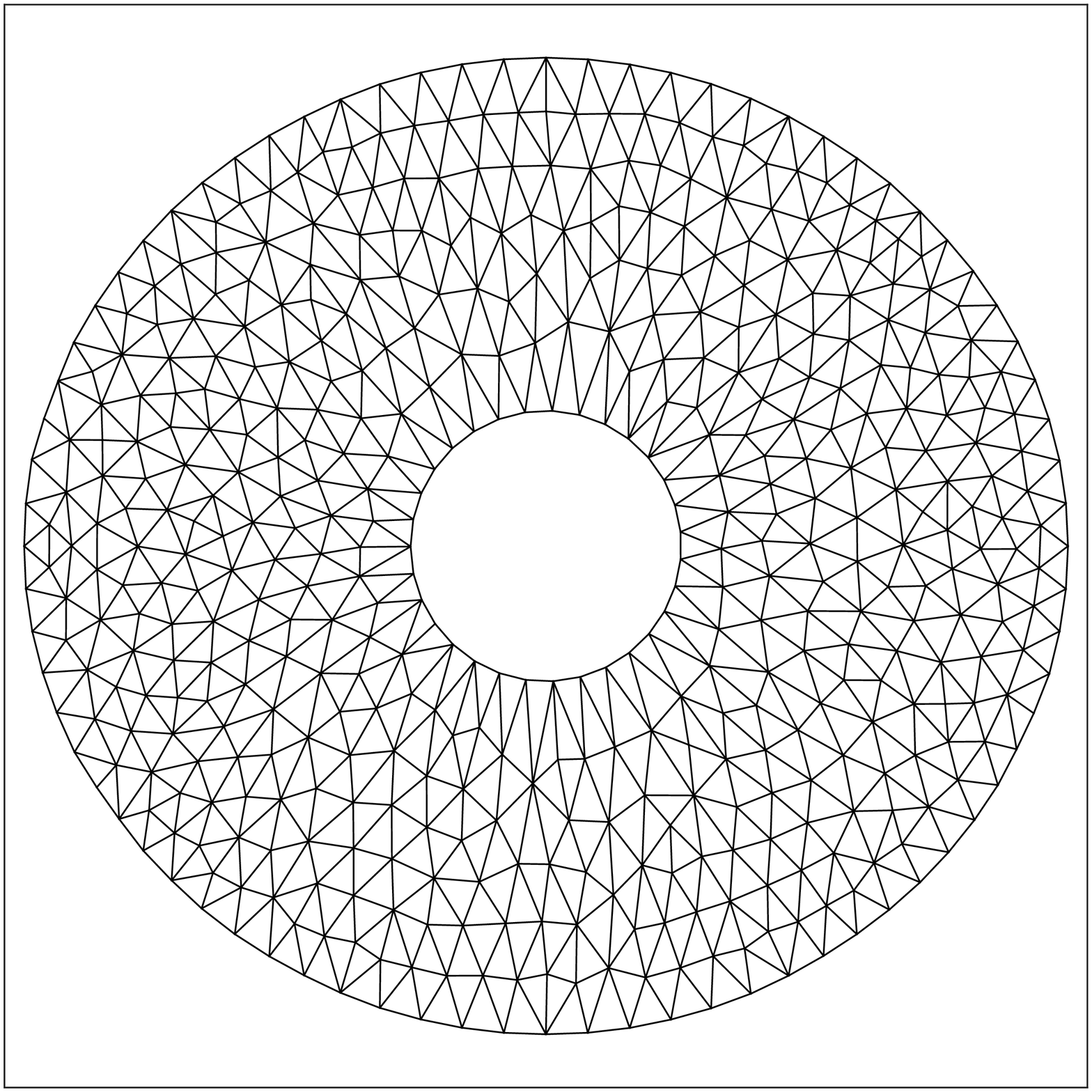}}
                 \caption{Mesh profile of $\Omega_h^{N_T}$ at $T=2$}
                \label{fig:Fig3b}
        \end{subfigure}%
\par\bigskip
        \begin{subfigure}[b]{0.49\textwidth}
                \centering
                \resizebox{0.9\textwidth}{!}{\includegraphics{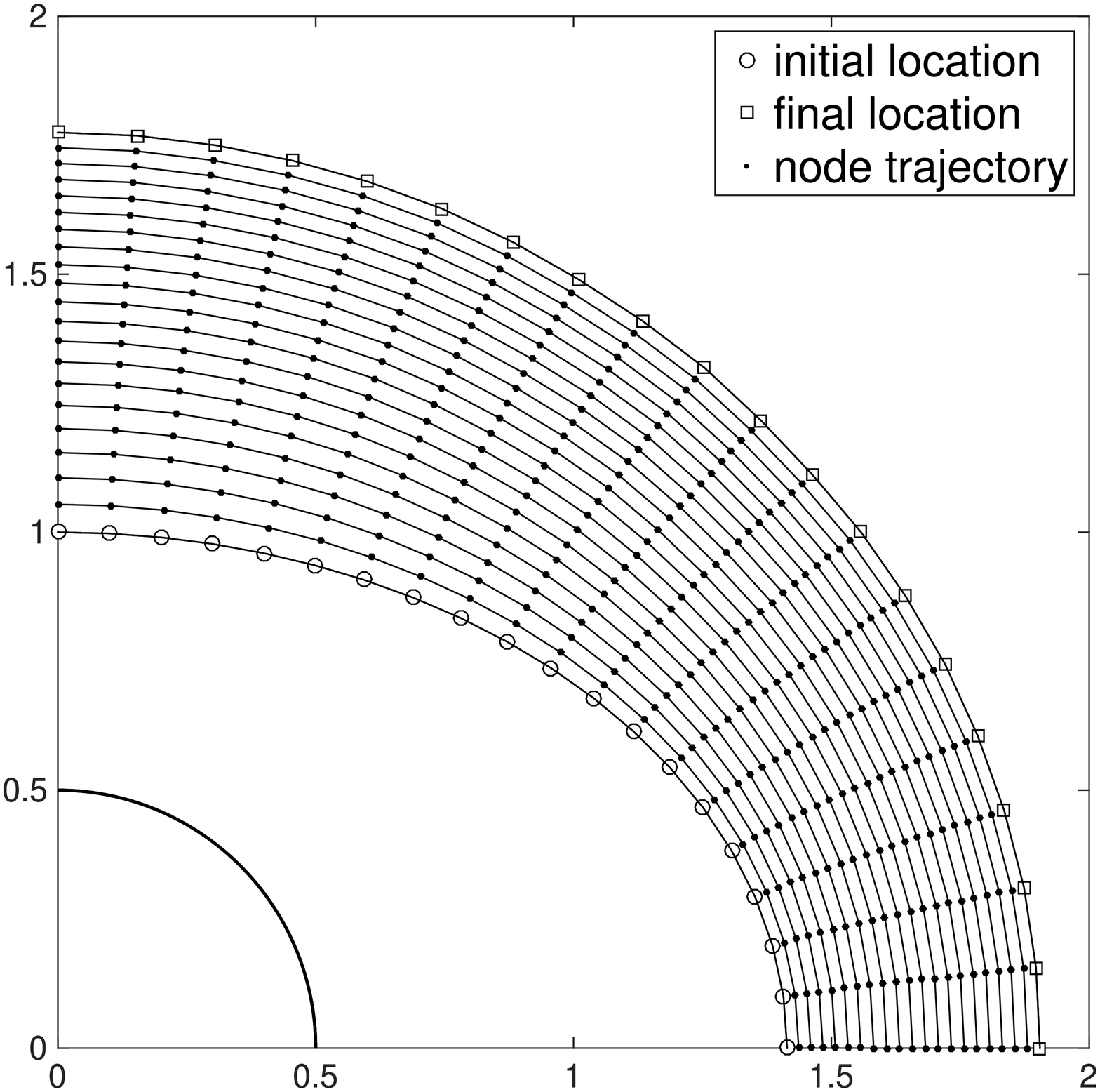}}
                \caption{Trajectory of boundary nodes }
                \label{fig:Fig3c}
        \end{subfigure}%
        \hfill
        \begin{subfigure}[b]{0.49\textwidth}
                \centering
                \resizebox{0.9\textwidth}{!}{\includegraphics{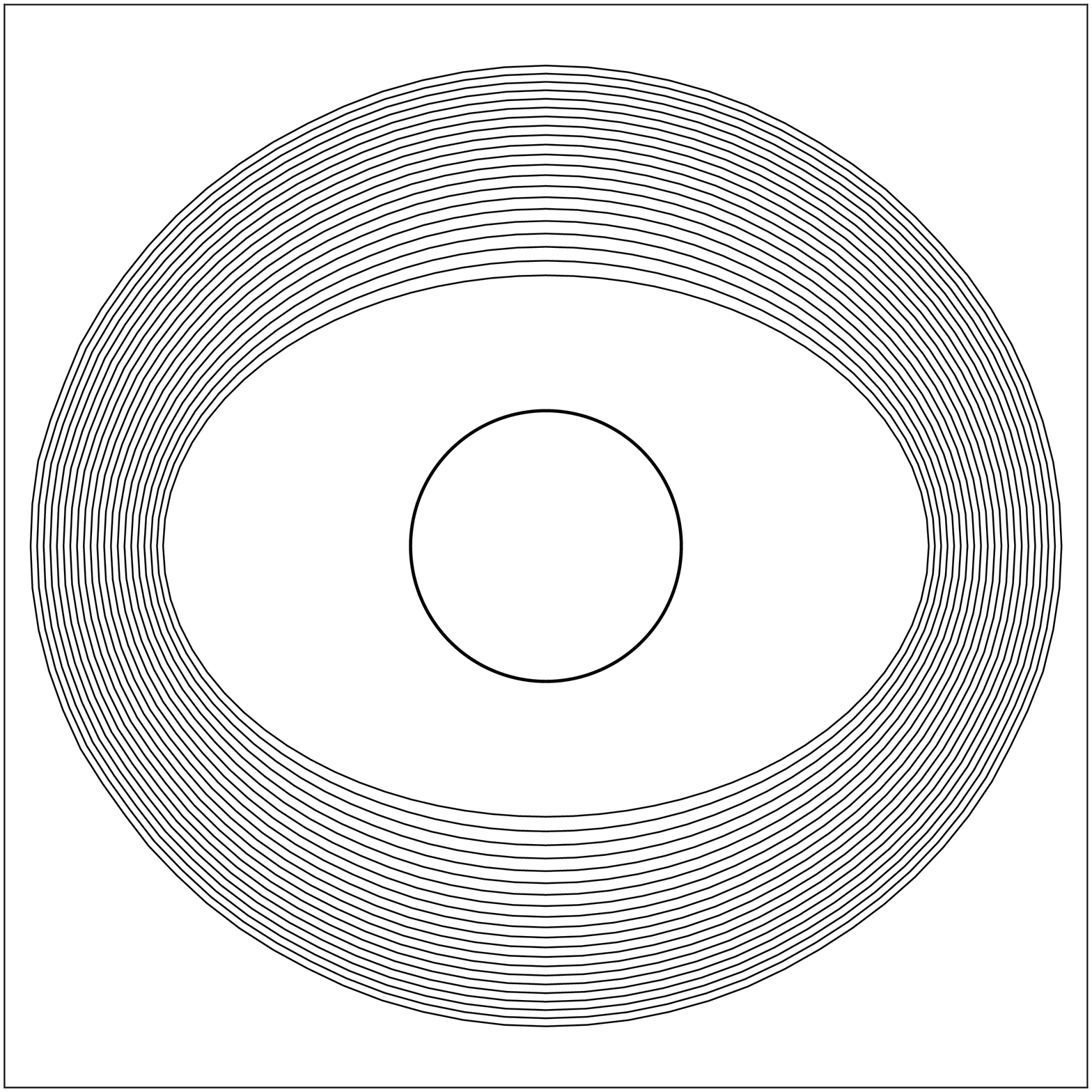}}
                \caption{Time evolution of the moving boundary}
                \label{fig:Fig3d}
        \end{subfigure}%
\caption{Computational results of Example \ref{example1}}
\label{fig:Fig3}
\end{figure}
%%%%%%%%%%%%%%%%%%%%%%%%%%%%%%%%%%%%%%%%%%%%%%%%%%%%%%%%%%%%
The numerical results of the present experiment are shown in Fig. \ref{fig:Fig3}.
Fig. \ref{fig:Fig3b}, in particular, shows the shape of the annular domain at the final time of interest $T=2$ (with its mesh profile).
Meanwhile, Fig. \ref{fig:Fig3c} plots the trajectory of the boundary nodes, and we see from this figure that the nodes are well-spaced at every time-step.
The last plot, Fig. \ref{fig:Fig3d}, depicts the evolution of the (exterior) moving boundary $\Gamma^k_h$.
Here, the innermost exterior boundary represents the initial profile of $\Gamma^0_h$ and the outermost corresponds to its final shape.
As expected, the annular domain $\Omega(t)\setminus \overline{B}$ expands through time.
%
%-----------------------------------------------------------
\subsection{EOC of CMM for the Hele-Shaw problem}
\label{sec:EOC-HS}

To check the accuracy of CMM for Hele-Shaw problems of the form \eqref{eq:general_Hele-Shaw}, with $\alpha=1$ and $\lambda=0$, we use the method of manufactured solutions \cite{SalariKnupp2000}.
Therefore, we construct a proper manufactured solution for the Hele-Shaw problem \eqref{eq:general_Hele-Shaw}.
%%% PROPOSITION 1
\begin{proposition}\label{prop:Manufactured1}
  We suppose $\phi(x,t)$ is a smooth function with $\phi<0$ for $x \in \overline{B}$ and $|\nabla \phi| \neq 0$ on $\{\phi = 0\}$, for $t \in [0,T]$.
  We define $f:=\Delta \phi$, $q_B:= -\nabla \phi \cdot {\nu}$, $\bb{\gamma}:= \left(-\frac{\phi _t}{|\nabla \phi|^2}+1\right) \nabla \phi$ (where $\phi_t$ means the partial derivative of $\phi$ with respect to $t$), and $\Omega_0:=\{ \phi(x,0)<0 \}$.
  Then, $u(x,t)=-\phi(x,t)$ and $\Omega (t):= \{ x \in \mathbb{R}^d \mid \phi(x,t) < 0) \}$ satisfy the moving boundary value problem \eqref{eq:general_Hele-Shaw} with $\alpha = 1$ and $\lambda=0$.
\end{proposition}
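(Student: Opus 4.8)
The plan is to verify, by direct substitution, that the pair $u(x,t)=-\phi(x,t)$ and $\Omega(t)=\{\phi(\cdot,t)<0\}$ satisfies each of the five relations in \eqref{eq:general_Hele-Shaw} with $\alpha=1$ and $\lambda=0$, treating only the kinematic free-boundary condition as nontrivial. Before touching the equations I would first record the two structural facts that make $\{\phi=0\}$ a legitimate moving interface. Since $\phi<0$ on $\overline{B}$, we have $\overline{B}\subset\{\phi(\cdot,t)<0\}=\Omega(t)$, so the annular configuration of the problem is respected; and since $|\nabla\phi|\neq 0$ on $\{\phi=0\}$, the implicit function theorem guarantees that $\Gamma(t)=\partial\Omega(t)$ is a smooth hypersurface on which the outward unit normal of $\Omega(t)$ is well defined and equals $\nu=\nabla\phi/|\nabla\phi|$ (it points toward $\{\phi>0\}$, i.e.\ out of $\Omega(t)$). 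These two observations supply everything the later computations need.

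Next I would dispatch the four routine conditions. Substituting $u=-\phi$ gives $-\Delta u=\Delta\phi=f$ in $\Omega(t)\setminus\overline{B}$, which is exactly the Poisson equation with the prescribed data; note this is what forces the choice $u=-\phi$ rather than $u=\phi$. On $\partial B$ the Neumann trace is $\nabla u\cdot\nu=-\nabla\phi\cdot\nu=q_B$ by the definition of $q_B$, so the $\alpha=1$ boundary condition is an identity. On $\Gamma(t)=\{\phi=0\}$ we have $u=-\phi=0$, giving the homogeneous Dirichlet condition. Finally, the initial condition is immediate, since $\Omega(0)=\{\phi(\cdot,0)<0\}=\Omega_0$ by the definition of $\Omega_0$.

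The crux is the free-boundary law $V_{n}=(-\nabla u+\bb{\gamma})\cdot\nu$. I would first derive the classical level-set expression for the normal velocity: differentiating the identity $\phi(x(t),t)=0$ along a point $x(t)$ transported with the interface yields $\nabla\phi\cdot\dot{x}+\phi_t=0$, and using $\dot{x}\cdot\nu=V_{n}$ together with $\nabla\phi=|\nabla\phi|\,\nu$ gives the geometric transport velocity $V_{n}=-\phi_t/|\nabla\phi|$. I would then expand the right-hand side using $-\nabla u=\nabla\phi$ and the explicit form of $\bb{\gamma}$, reduce $\nabla\phi\cdot\nu$ to $|\nabla\phi|$, and confirm that the two sides agree. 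The mechanism to highlight is that $\bb{\gamma}$ is taken as a scalar multiple of $\nabla\phi$ precisely so that its normal component reconciles the purely geometric velocity $-\phi_t/|\nabla\phi|$ with the Hele-Shaw flux $-\nabla u\cdot\nu=|\nabla\phi|$.

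I expect this last identity to be the only real obstacle: it is the single step that couples the geometry (the level-set velocity formula and the explicit normal $\nu=\nabla\phi/|\nabla\phi|$) with the analytic data, and it demands careful sign bookkeeping, since both the sign of $\phi$ inside $\Omega(t)$ and the orientation of $\nu$ enter simultaneously. Once that reconciliation is carried out, the remaining four relations are, as indicated above, immediate substitutions, and the proposition follows.
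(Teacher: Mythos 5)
Your plan coincides with the paper's own proof, which is a two-line remark invoking exactly the level-set identities you derive, $V_{n}=-\phi_t/|\nabla\phi|$ and $\nu=\nabla\phi/|\nabla\phi|$, and leaving the substitution to the reader; your verification of the four routine conditions (the Poisson equation forcing $u=-\phi$, the Neumann datum on $\partial B$, the homogeneous Dirichlet condition on $\Gamma(t)$, and the initial condition) is correct and complete, and your geometric preliminaries ($\overline{B}\subset\Omega(t)$, smoothness of $\{\phi=0\}$ via $|\nabla\phi|\neq 0$) are exactly what is needed.

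The gap is in the one step you announce but do not execute: ``confirm that the two sides agree.'' With the data exactly as printed, they do not. On $\Gamma(t)$ one has $-\nabla u\cdot\nu=\nabla\phi\cdot\nu=|\nabla\phi|$ and
\[
\bb{\gamma}\cdot\nu=\Bigl(-\frac{\phi_t}{|\nabla\phi|^2}+1\Bigr)|\nabla\phi| = -\frac{\phi_t}{|\nabla\phi|}+|\nabla\phi|,
\]
so that
\[
(-\nabla u+\bb{\gamma})\cdot\nu = -\frac{\phi_t}{|\nabla\phi|}+2|\nabla\phi| = V_{n}+2|\nabla\phi|,
\]
and the surplus $2|\nabla\phi|$ cannot vanish, since $|\nabla\phi|\neq0$ on $\{\phi=0\}$ by hypothesis. (A sanity check with the $\phi$ of Example \ref{example2} at $t=0$, $x=(0,1)$: $V_{n}=1/2$, while the right-hand side evaluates to $9/2$.) The identity closes precisely when the scalar coefficient in $\bb{\gamma}$ is $-\phi_t/|\nabla\phi|^2-1$; keeping the printed $+1$ would instead require $u=+\phi$, which is incompatible with $f=\Delta\phi$ and $q_B=-\nabla\phi\cdot\nu$, as your own sign analysis of the Poisson equation shows. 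So your instinct that this last identity is ``the only real obstacle'' and ``demands careful sign bookkeeping'' was exactly right, but a blind proof must actually do the bookkeeping: carried out, it reveals that the kinematic condition holds only after correcting the sign in $\bb{\gamma}$ (a typo in the statement), after which your argument goes through verbatim.
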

%%% END OF PROPOSITION 1
\begin{proof}
The proposition is easily verified by straightforward computation noting that the normal velocity of the moving boundary $V_{n}$ and the unit normal vector ${\nu}$ with respect to the moving boundary $\Gamma(t)$ can be expressed in terms of the level set function $\phi$; that is, $V_{n} = -\frac{\phi_t}{|\nabla \phi|}$ and ${\nu} = \frac{\nabla \phi}{|\nabla \phi|}$ (see, e.g., \cite{KimuraNotes2008}).
\end{proof}
%%%
We check the experimental order of convergence (EOC) by comparing the approximate solution $u^k_h$ with the manufactured solution $\phi^k$.
In this case, $\phi^k$ is viewed as the interpolated exact solution to the solution space of the discretized problem.
Now, with regards to EOC, we define the numerical errors as follows:
\[
  \operatorname{err}_{\Gamma}  := \max_{0 \leqslant k \leqslant N_T} \max_{x \in \Gamma^k_h} \ \operatorname{dist}(x,\Gamma(k \tau)),\qquad
  \operatorname{err}_{\mathcal{X}^k} := \max_{0 \leqslant k \leqslant N_T} \left\{ \left\| u^k_h-\Pi_h u(\cdot,k\tau) \right\|_{\mathcal{X}^k} \right\},
\]
where $\mathcal{X}^k \in \{L^2(\Omega_h^k \setminus \overline{B}), H^1(\Omega_h^k \setminus \overline{B})\}$, and $\Pi_h :H^1(\Omega) \rightarrow P_1(\mathcal{T}_h(\Omega))$ is the projection map such that $\Pi_h u(p) = u(p)$ for all nodal points $p \in \mathcal{N}_h$ of $\mathcal{T}_h(\Omega)$.

%%% EXAMPLE 2
\begin{numex}
\label{example2}
As an example, we perform a numerical experiment with the following conditions: $\varepsilon  \in \{ 10^{-4}, 10^{-2} \}$, $h \approx \tau = 0.05$,
\begin{align*}
	\phi(x,t)		&:=\frac{x_1^2}{2(t+1)}+\frac{x_2^2}{t+1}-1,  \quad t\in [0,1],  \quad (x:=(x_1,x_2)),
\end{align*}
\sloppy so $\overline{B} := \left\{x\in \mathbb{R}^2 \ \middle\vert \ x_1^2+x_2^2 \leqslant 0.5^2 \right\}$ and $\Omega_0 := \left\{x\in \mathbb{R}^2 \ \middle\vert \ 0.5 x_1^2+x_2^2 < 1 \right\}$.
\end{numex}
The computational results of Example \ref{example2} are shown in Fig. \ref{fig:Fig4}.
The initial profile of $\Gamma$ is depicted in Fig. \ref{fig:Fig4a}, while its shape at time $T=1$ is shown in Fig. \ref{fig:Fig4b}.
Meanwhile, Fig. \ref{fig:Fig4c} and Fig. \ref{fig:Fig4d} plot the evolution of the moving boundary $\Gamma$ (on the first quadrant) from initial to final time of interest $T=1$ with $\varepsilon = 10^{-4}$ and $\varepsilon = 10^{-2}$, respectively.
Notice that we get more stable evolution, in the sense that the boundary nodes are well-spaced at every time step, of the moving boundary for a higher value of $\varepsilon$ than with a lower value (refer, in particular, to the encircled region in the plots).
In fact, for higher values of $\varepsilon$, we observe better mesh quality than when $\varepsilon$ is of small magnitude.
Consequently, we notice in our experiment the obvious fact that there is a trade-off between accuracy and stability of the scheme when $\varepsilon$ is made smaller compared to the time step size $\tau \approx h$.
In fact, as already expected, the scheme is stable when the step size is taken relatively small compared to $\varepsilon$.
Results regarding accuracy are illustrated in further illustrations below.
%%%%%%%%%%%%%%%%%%%%%%%%%%%%%%%%%%%%%%%%%%%%%%%%%%%%%%%%%%%%
%%% FIGURE 4
\begin{figure}[htbp]
\centering
        \begin{subfigure}[b]{0.5\textwidth}
                \centering
                \resizebox{0.95\textwidth}{!}{\includegraphics{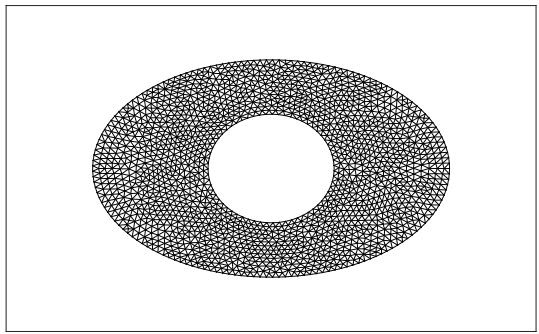}}
                \caption{Initial mesh profile $\Omega^{0}_h$ ($\varepsilon = 10^{-4}$)}
                \label{fig:Fig4a}
        \end{subfigure}%
        \hfill
        \begin{subfigure}[b]{0.5\textwidth}
                \centering
                \resizebox{0.95\textwidth}{!}{\includegraphics{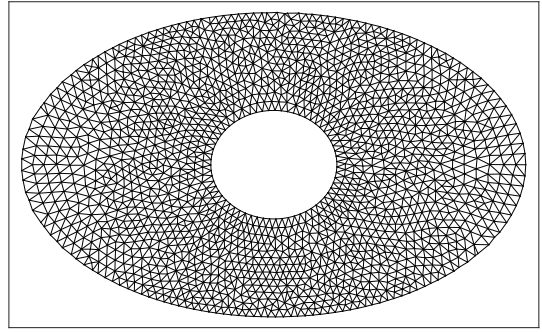}}
                \caption{Mesh profile of $\Omega^{N_T}_h$ at $T=1$ ($\varepsilon = 10^{-4}$)}
                \label{fig:Fig4b}
        \end{subfigure}%
        \par
        \bigskip
        \begin{subfigure}[b]{0.5\textwidth}
                \centering
                \resizebox{0.95\textwidth}{!}{\includegraphics{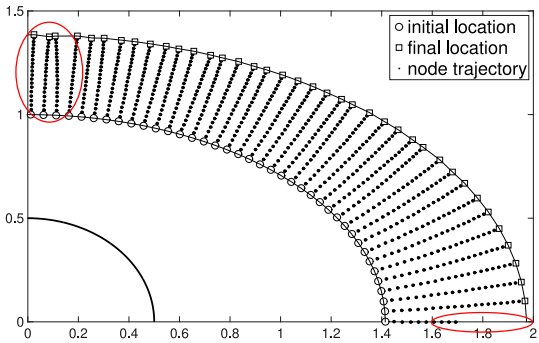}}
                \caption{Boundary nodes trajectory ($\varepsilon = 10^{-4}$)}
                \label{fig:Fig4c}
        \end{subfigure}%
        \hfill
                \begin{subfigure}[b]{0.5\textwidth}
                \centering
                \resizebox{0.95\textwidth}{!}{\includegraphics{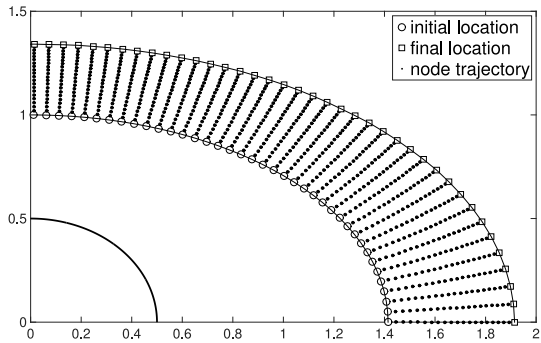}}
                \caption{Boundary nodes trajectory ($\varepsilon = 10^{-2}$)}
                \label{fig:Fig4d}
        \end{subfigure}%
\caption{Computational results for Example \ref{example2}}
\label{fig:Fig4}
\end{figure}
%%% END OF FIGURE 4
%%%%%%%%%%%%%%%%%%%%%%%%%%%%%%%%%%%%%%%%%%%%%%%%%%%%%%%%%%%%

We also check how the error changes with the magnitude of the time step $\tau$ along with the maximum mesh size $h$ of the triangulation $\mathcal{T}_h$ by calculating the EOC of the present numerical example.
Here, the mesh size $h$ is as large as the time step $\tau$, i.e., $h \approx \tau$.
The results are depicted in Fig. \ref{fig:Fig5}.
Notice in these figures that the orders are mostly linear when $\varepsilon$ is sufficiently small, except, of course, in Fig. \ref{fig:Fig5b}.
Nevertheless, we can expect that the numerical solution converges to the exact solution by reducing the time step as well as the mesh size in the numerical procedure.
Based on these figures, the error is evidently reduced by choosing smaller $\varepsilon$.
However, in Fig. \ref{fig:Fig5b}, for sufficiently small $\varepsilon$, the errors become saturated and the saturated values decrease with order $O(\tau)$.
%%%%%%%%%%%%%%%%%%%%%%%%%%%%%%%%%%%%%%%%%%%%%%%%%%%%%%%%%%%%
%%% FIGURE 5
\begin{figure}[htbp]
        \begin{subfigure}[b]{0.49\textwidth}
                \centering
                \resizebox{\textwidth}{!}{\includegraphics{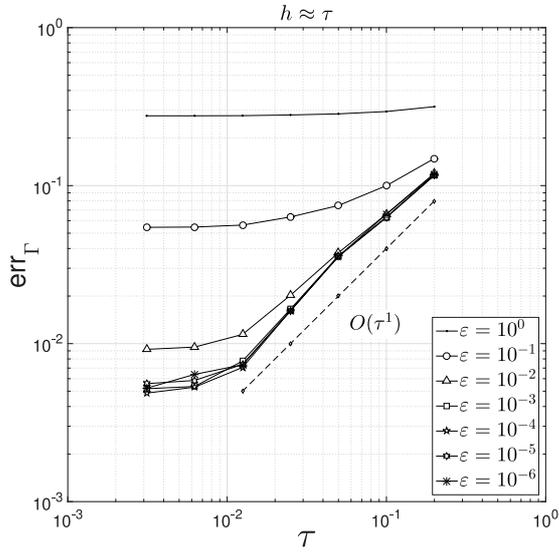}}
                \caption{$\tau$ vs $\operatorname{err}_{\Gamma}$ }
                \label{fig:Fig5a}
        \end{subfigure}%
        \hfill
        \begin{subfigure}[b]{0.49\textwidth}
                \centering
                \resizebox{\textwidth}{!}{\includegraphics{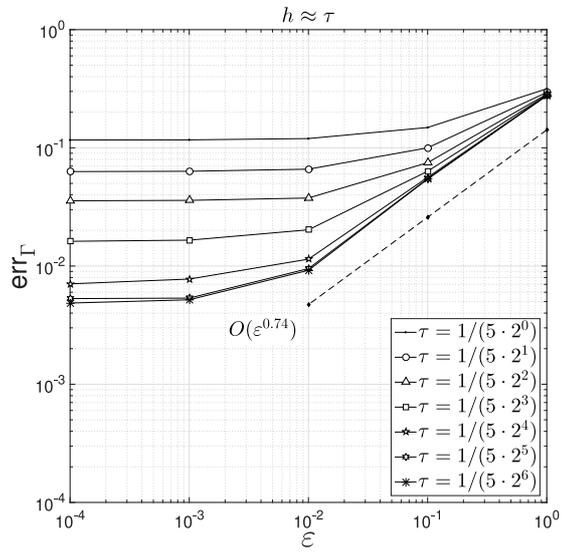}}
                \caption{$\varepsilon$ vs $\operatorname{err}_{\Gamma}$}
                \label{fig:Fig5b}
        \end{subfigure}%

	\par\bigskip
        \begin{subfigure}[b]{0.49\textwidth}
                \centering
                \resizebox{\textwidth}{!}{\includegraphics{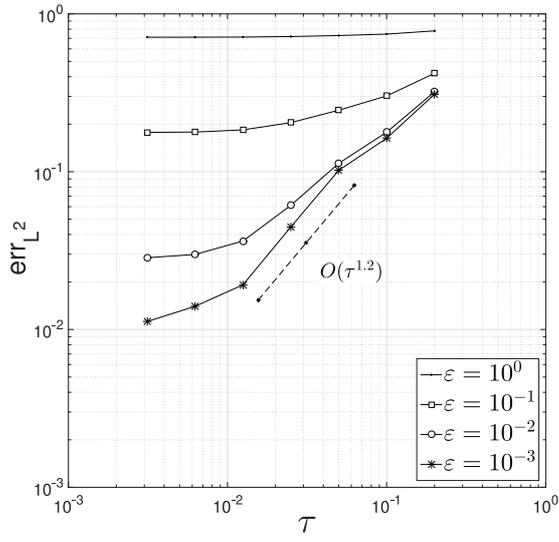}}
                \caption{$\tau$ vs $\operatorname{err}_{L^2}$ }
                \label{fig:Fig5c}
        \end{subfigure}%
        \hfill
        \begin{subfigure}[b]{0.49\textwidth}
                \centering
                \resizebox{\textwidth}{!}{\includegraphics{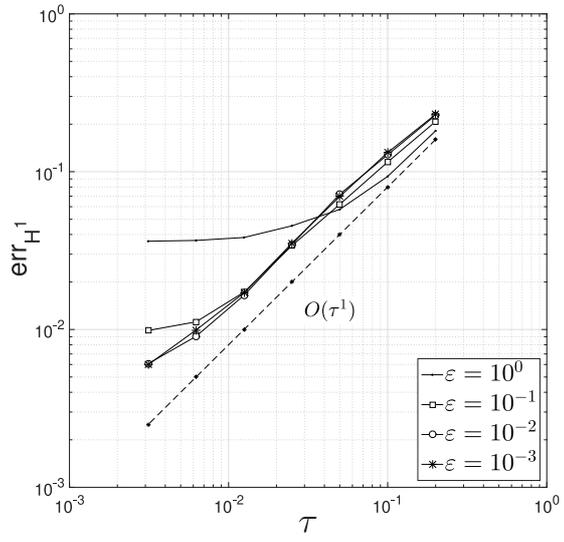}}
                \caption{$\tau$ vs $\operatorname{err}_{H^1}$}
                \label{fig:Fig5d}
        \end{subfigure}%
\caption{Error of convergences for Example \ref{example2}}
\label{fig:Fig5}
\end{figure}
%%% END OF FIGURE 5
%%%%%%%%%%%%%%%%%%%%%%%%%%%%%%%%%%%%%%%%%%%%%%%%%%%%%%%%%%%%
%-----------------------------------------------------------
\section{Bernoulli Free Boundary Problem}
\label{sec:Bernoulli}

In this section, we showcase the practicality of the method for solving stationary free boundary problems.

\subsection{Application of CMM in solving the Bernoulli free boundary problem}
\label{sec:BP}

%%% %%% %%%
%%%\edits{
	Here, we shall show that our proposed finite element scheme can actually be applied to numerically solved the well-known Bernoulli problem, a prototype of stationary free boundary problems \cite{FlucherRumpf1997}.
	The applicability of our method in solving the said problem is not surprising since the kinematic boundary condition $V_{n} = (- \nabla u + \bb{\gamma})\cdot {\nu} +\lambda$ with $\bb{\gamma} \equiv \bb{0}$ where $\lambda < 0$, in fact provides a descent direction for a gradient-based descent algorithm for solving the Bernoulli problem in the context of shape optimization.
	To see this, let us briefly discuss how the Bernoulli problem can be solved using the method of shape optimization, a well-established tool for solving FBPs \cite{DelfourZolesio2011}.

	The Bernoulli problem splits into two types: (i) the exterior case, similar to the topological profile of the domain $\Omega\setminus \overline{B}$ examined in previous sections, and (ii) the interior case which is the exact opposite of the exterior problem (i.e., the free boundary is the interior part of the disjoint boundaries).
	In the discussion that follows, we shall focus on the former case which can be described by the following overdetermined boundary value problem:
	%%% BERNOULLI PROBLEM
	\begin{equation}
	\label{Bernoulli_problem}
	\left\{\arraycolsep=1.4pt\def\arraystretch{1.}
	\begin{array}{rcll}
		-\Delta u		&=&0 		&\quad\text{in $\Omega\setminus \overline{B}$},\\
		u			&=&1		&\quad\text{on $\partial B$},\\
		u=0\quad\text{and}\quad\nabla u\cdot{\nu}&=&\lambda	&\quad\text{on $\Gamma$}.
	\end{array}
	\right.
	\end{equation}
	%%%
	There are several ways to reformulate the above problem into a shape optimization setting (see, e.g., \cite{RabagoThesis2020} and the references therein), and the one we are concerned with here is the minimization of the shape functional \cite{EpplerHarbrecht2006}
	\[
		J(\Omega) = \int_{\Omega}\left( |\nabla (u(\Omega))|^2 + \lambda^2 \right) \operatorname{d}\!x,
	\]
	where $u = u(\Omega)$ is a unique weak solution to the underlying well-posed state problem:
	%%% DIRICHLET PROBLEM
  \vspace{2mm}

    Find $u \in H^1_{\Gamma,0}(\Omega \setminus \overline{B})$, with $u = 1$ on $\partial B$, such that
    \begin{equation}\label{eq:state}
     	\int_{\Omega \setminus \overline{B}} \nabla u : \nabla \varphi \ {\rm d}x = 0,
      		 \quad \forall \varphi \in H^1_{\Gamma,0}(\Omega \setminus \overline{B}).
  	\end{equation}
	We like to emphasize here that the positivity of the Dirichlet data on the fixed boundary $\partial B$ implies that the state solution $u$ is positive in $\Omega$.
	This, in turn, yields the identity $|\nabla u| \equiv - \nabla u \cdot \nu$ on $\Gamma$ because $u$ takes homogenous Dirichlet data on the free boundary $\Gamma$.

	The solution to the exterior Bernoulli problem \eqref{Bernoulli_problem} is equivalent to finding the solution pair $(\Omega, u(\Omega))$ to the shape optimization problem
	\begin{equation}
	\label{shape_problem}
		\min_{\Omega} J(\Omega),
	\end{equation}
	where $u(\Omega) \in H^1_{\Gamma,0}(\Omega \setminus \overline{B})$, with $u = 1$ on $\partial B$, satisfies the variational problem \eqref{eq:state}.
	This results from the necessary condition of a minimizer of the cost functional $J(\Omega)$, that is,
	\[
		\operatorname{d}\!J(\Omega)[\boldsymbol{V}]
			%%%= \lim_{\varepsilon \searrow 0} \frac{J(\Omega_\varepsilon) - J(\Omega)}{\varepsilon}
			= \left. \frac{\operatorname{d}}{\operatorname{d}\varepsilon} J(\Omega_{\varepsilon})\right|_{\varepsilon = 0}
			= \int_{\Gamma} \left[ \lambda^2 - \left( \nabla u \cdot {\nu} \right)^2\right] V_{n} \operatorname{d}\!x
			=0,
			\quad V_{n}:= \boldsymbol{V} \cdot {\nu},
	\]
	has to hold for all sufficiently smooth perturbation fields $\boldsymbol{V}$.
	Here, $\Omega_\varepsilon$ stands for a deformation of $\Omega$ along the deformation field $\boldsymbol{V}$ vanishing on $\partial B$.
	For more details of how to compute $\operatorname{d}\!J(\Omega)[\boldsymbol{V}]$, and for more discussion on shape optimization methods, in general, we refer the readers to \cite{DelfourZolesio2011} and \cite{SokolowskiZolesio1992}.

	To numerically solve \eqref{shape_problem}, a typical approach is to utilize the \emph{shape gradient} (i.e., the kernel of the shape derivative $\operatorname{d}\!J(\Omega)[\boldsymbol{V}]$, see, e.g., \cite[Thm. 3.6, p. 479--480]{DelfourZolesio2011}) in a gradient-based descent algorithm.
	For instance, given enough regularity on the boundary $\Gamma$ and on the state $u$, we can take $\boldsymbol{0} \not\equiv \boldsymbol{V} = -\left[ \lambda^2 - \left(\nabla u \cdot {\nu}\right)^2\right]{\nu} \in L^2(\Gamma)$.
	This implies that, formally, for small $t>0$, we have the following inequality
	\begin{align*}
		J(\Omega_t)
			&= J(\Omega) + t \left. \frac{\operatorname{d}}{\operatorname{d}\varepsilon} J(\Omega_{\varepsilon})\right|_{\varepsilon = 0} + O(t^2)\\
			&= J(\Omega) + t \int_{\Gamma} \left[ \lambda^2 - \left(\nabla u \cdot {\nu} \right)^2\right] V_{n} \operatorname{d}\!s + O(t^2)\\
			%%%
			&= J(\Omega) - t \int_{\Gamma} |V_{n}|^2 \operatorname{d}\!s + O(t^2)
			< J(\Omega).
	\end{align*}
	%%%
	Here, we observe that we can simply take $(-\nabla u \cdot {\nu} + \lambda){\nu}$ as the descent vector $\boldsymbol{V}$.
	This issues from the fact that $\nabla u \cdot {\nu} + \lambda < 0$ on $\Gamma$ since $|\nabla u| \equiv - \nabla u \cdot \nu$ on $\Gamma$.
	Indeed, with $\boldsymbol{V} = (-\nabla u \cdot {\nu} + \lambda){\nu}$, we see that
	\begin{align*}
		J(\Omega_t)
			&= J(\Omega) + t \int_{\Gamma} \left(\nabla u \cdot {\nu} + \lambda \right) \left(-\nabla u \cdot {\nu} + \lambda\right) V_{n} \operatorname{d}\!s + O(t^2)\\
			%%%
			&= J(\Omega) + t \int_{\Gamma} \underbrace{\left(\nabla u \cdot {\nu} + \lambda \right) }_{<\ 0} |V_{n}|^2 \operatorname{d}\!s + O(t^2)
			< J(\Omega).
	\end{align*}
	%%%
	It is worth to mention here that simply taking the kernel of the shape derivative of the cost function (multiplied to the normal vector on the free boundary) as the deformation field $\boldsymbol{V}$ may lead to subsequent loss of regularity of the free boundary, hence forming oscillations of the free boundary.
	To avoid such phenomena, the descent vector is, in most cases, replaced by the so-called \emph{Sobolev gradient} \cite{Neuberger2010}.
	A strategy to do this is to apply the \emph{traction method} or the $H^1$ gradient method which are popular smoothing techniques in the field of shape design problems (see, e.g., \cite{AzegamiBook2020}).

	Now, the evolution of the free boundary $\Gamma(t)$ of the Bernoulli problem according to a shape gradient-based descent algorithm (see, e.g., \cite{EpplerHarbrecht2006}) describes a similar evolutionary equation for the Hele-Shaw problem with the moving boundary given as $\Gamma(t)$:
	%%% HELE-SHAW PROBLEM
	\begin{equation}
	\label{Hele-Shaw-Bernoulli}
	\left\{\arraycolsep=1.4pt\def\arraystretch{1.1}
	\begin{array}{rcll}
		-\Delta u		&=&0 		&\quad\text{in $\Omega(t)\setminus \overline{B}$, \quad $t\in[0, T]$},\\
		u			&=&1		&\quad\text{on $\partial B$},\\
		u			&=&0		&\quad\text{on $\Gamma(t)$, \quad $t \in [0,T]$},\\
		V_{n} 			&=&- \nabla u\cdot{\nu} + \lambda	&\quad\text{on $\Gamma(t)$, \quad $t \in [0,T]$},\\
		\Omega(0)	&=& \Omega_0,
	\end{array}
	\right.
	\end{equation}
	where $T>0$.
%%% %%% %%%
Before we give a concrete numerical example illustrating the evolution of the solution of \eqref{Hele-Shaw-Bernoulli},
note that the convergence of the solution of the moving boundary problem in CMM to a stationary (non-degenerate) shape solution will be given in Section \ref{sec:properties} (see Proposition \ref{prop:convergence_to_a_stationary_point}).
Furthermore, we infer from this claim that the convergence of $\Omega(t)$ to $\Omega^\ast$, as time $t$ increases indefinitely, does not depend on the choice of the value of the parameter $\varepsilon$ in the $\varepsilon$-approximation of the normal-velocity flow $V_{n}\nu$ of the moving boundary $\Gamma(t)$.

%%%} END OF \EDITS
%%%%%%%%%%%%%%%%%%%%%%%%%%%%%%%%
%%% EXAMPLE 3
\begin{numex}
\label{example3}
Let us now consider a concrete example of the exterior Bernoulli problem and apply CMM to approximate its numerical solution.
We consider the problem with $\lambda = -10$, and define the fixed interior domain as the \texttt{L}-shaped domain $B = (0.25,0.25)^2\setminus [0.25,0.25]$.
Also, we solve the problem for different choices of initial boundary $\Gamma(0)$.
In particular, we consider it to be a circle $\Gamma_1^0$, a square with rounded corners $\Gamma_2^0$, and a rectangle with rounded corners $\Gamma_3^0$.
We carry out the approximation procedure discretizing these domains with (initial) triangulations having mesh of width $h \approx 5 \times 2^3$.
We set the time step to $\tau = 0.001$ and take $T = 1$ as the final time of interest.
Hence, the procedure terminates after $N_T = 1000$ time steps.
Lastly, we set the CMM parameter $\varepsilon$ to $0.1$.
\end{numex}
%%%%%%%%%%%%%%%%%%%%%%%%%%%%%%%%%%%%%%%%%%%%%%%%%%%%%%%%%%%%
	The results of the experiments are summarized in Fig. \ref{fig:Fig6}--Fig. \ref{fig:Fig9}.
	Fig. \ref{fig:Fig6} depicts the initial mesh triangulation of each mentioned test cases.
	Fig. \ref{fig:Fig7}, on the other hand, shows the mesh profile after $N_T$ time steps (i.e., the computational mesh profile at time $T=1$).
	Notice from these plots that the mesh quality actually deteriorates in the sense that the area of some of the triangles become very small (see the part of the discretized shape near the concave region of the domain in Fig. \ref{fig:Fig7}).
	This is not actually surprising since we do not imposed any kind of mesh improvement or re-meshing during the approximation process.
	Of course, as a consequence, the step size may become too large in comparison with the minimum mesh size of the triangulation after a large number of time steps have passed, and this may cause instability within the approximation scheme.
	Even so, we do not encounter this issue in these present test examples.
	Meanwhile, to illustrate how the nodes changes after each time step, we plot the boundary nodes' trajectory from initial to final time step for each test cases, and these are projected in Fig. \ref{fig:Fig8}.
	Moreover, in Fig. \ref{fig:Fig9a}, we plot the shapes at $T=1$ (i.e., the shape $\Gamma_i^k$, $i=1,2,3$, at final time step $k=N_T$) against each of the test cases.
	Notice that the computed shapes are slightly different, but are nevertheless close to the shape obtained via shape optimization methods (see \cite{RabagoThesis2020}).
	This is primarily due to the fact that the number of triangles within the initial mesh profile generated for each test cases are also different.
	Nonetheless, as we tested numerically, the resulting shapes at time $ T=1$ for each cases coincide at one another under smaller time steps and finer meshes.
	Lastly, Fig. \ref{fig:Fig9b} graphs the histories of the $L^2(\Gamma)$-norms between $\nabla u$ and $\lambda$, for each cases.

	We mention here that we also tested the case where the initial shape actually contains entirely the closure of the stationary shape (which is typically the experimental setup examined in the literature), and, as expected, we also get an almost identical shape with the ones obtained for the given cases.
	Here, we opted to consider the above-mentioned test setups to see whether our scheme works well in the case that the initial shape does not contain some regions of the stationary shape.
%%%%%%%%%%%%%%%%%%%%%%%%%%%%%%%%%%%%%%%%%%%%%%%%%%%%%%%%%%%%
%%% FIGURE 6
\begin{figure}[htbp]
\centering
        \begin{subfigure}[b]{0.32\textwidth}
                \centering
                \resizebox{\textwidth}{!}{\includegraphics{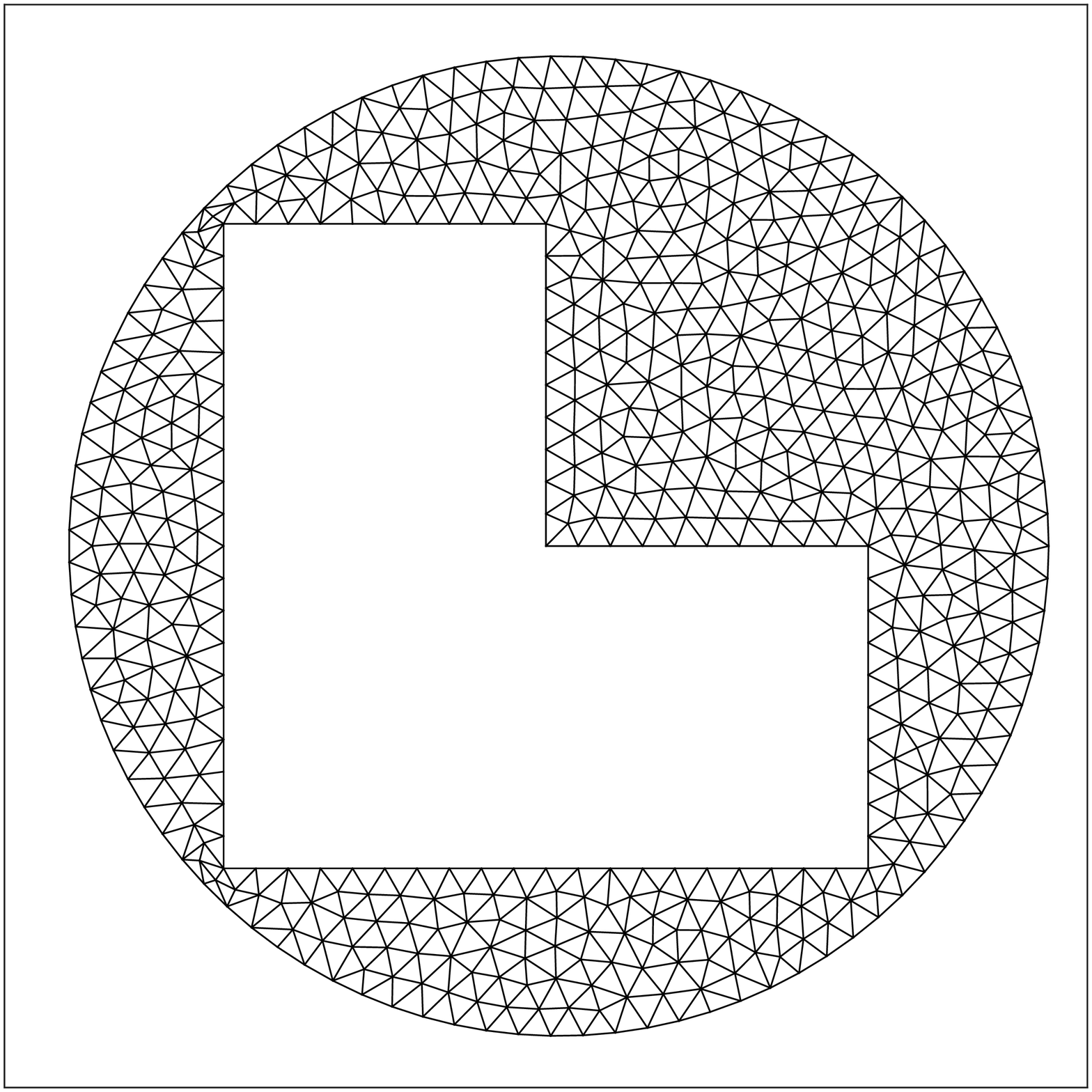}}
                \caption{Mesh profile of $\Gamma_1^0$}
                \label{fig:Fig6a}
        \end{subfigure}%
        \hfill
	 \begin{subfigure}[b]{0.32\textwidth}
                \centering
                \resizebox{\textwidth}{!}{\includegraphics{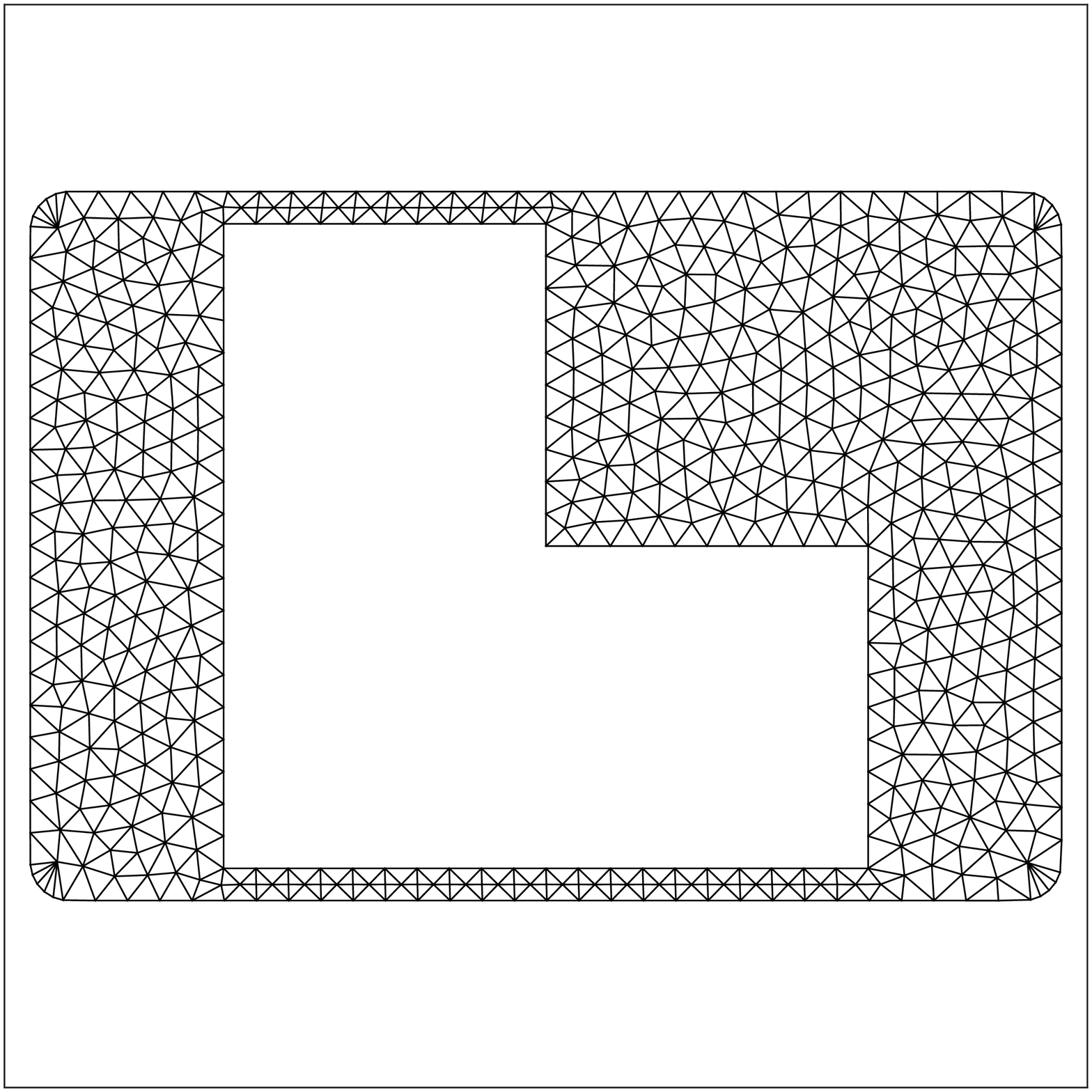}}
                 \caption{Mesh profile of $ \Gamma_2^0$}
                \label{fig:Fig6b}
        \end{subfigure}%
        \hfill
        \begin{subfigure}[b]{0.32\textwidth}
                \centering
                \resizebox{\textwidth}{!}{\includegraphics{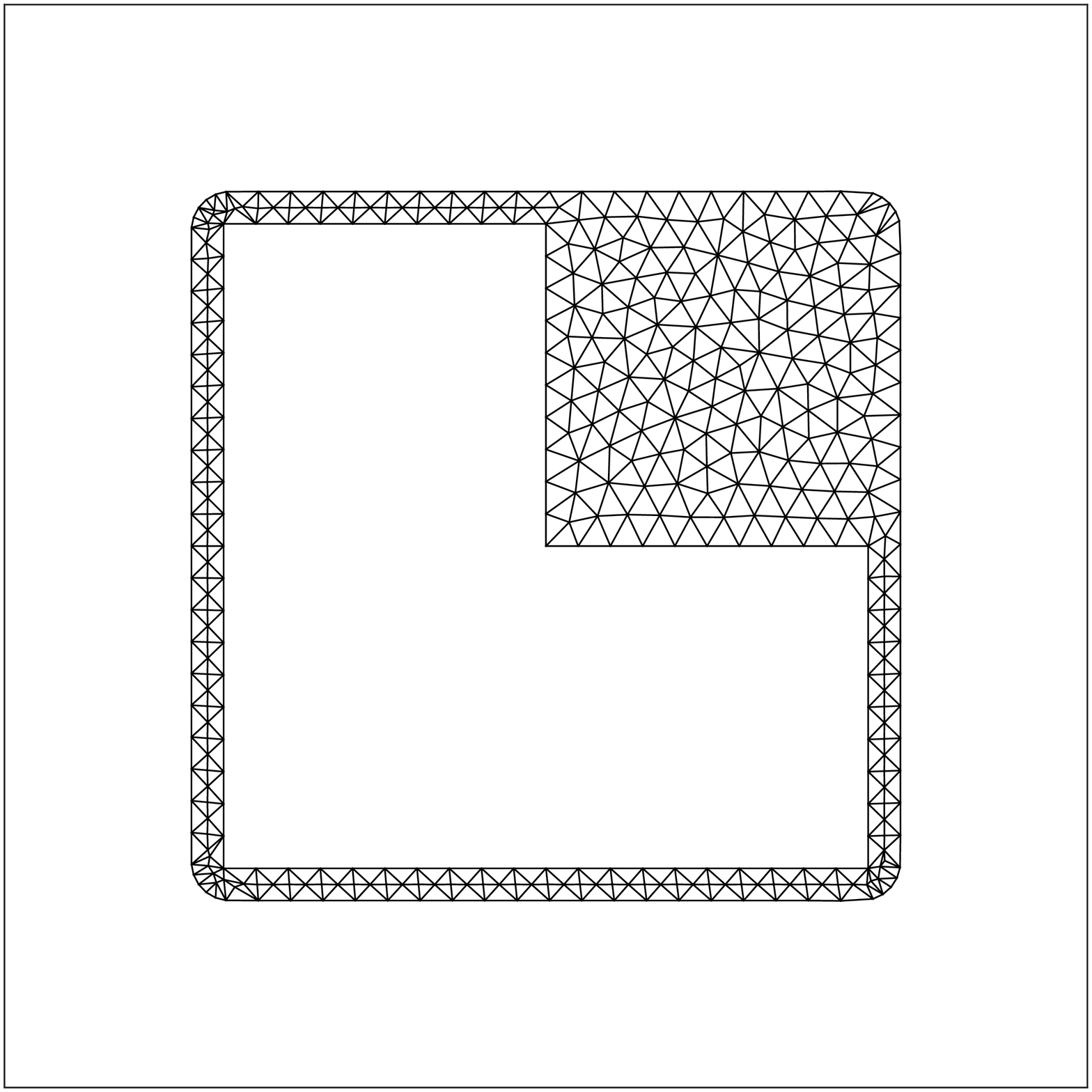}}
                \caption{Mesh profile of $\Gamma_3^0$}
                \label{fig:Fig6c}
        \end{subfigure}%
\caption{Initial computational meshes for each test case in Example \ref{example3}}
\label{fig:Fig6}
\end{figure}
%%%%%%%%%%%%%%%%%%%%%%%%%%%%%%%%%%%%%%%%%%%%%%%%%%%%%%%%%%%%
%%% FIGURE 7
\begin{figure}[htbp]
\centering
        \begin{subfigure}[b]{0.32\textwidth}
                \centering
                \resizebox{\textwidth}{!}{\includegraphics{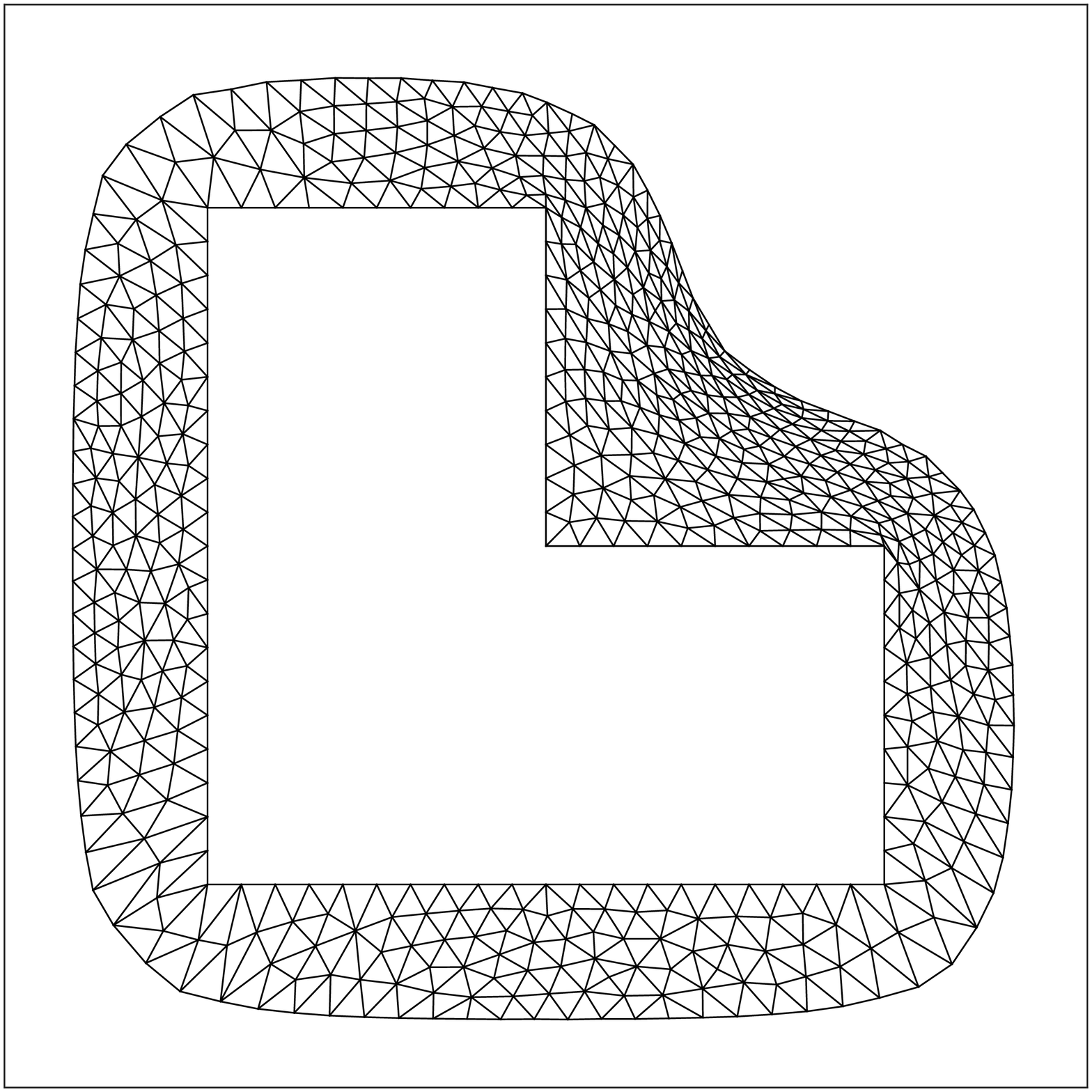}}
                \caption{Mesh profile of $\Gamma_1^{N_T}$}
                \label{fig:Fig7a}
        \end{subfigure}%
        \hfill
	 \begin{subfigure}[b]{0.32\textwidth}
                \centering
                \resizebox{\textwidth}{!}{\includegraphics{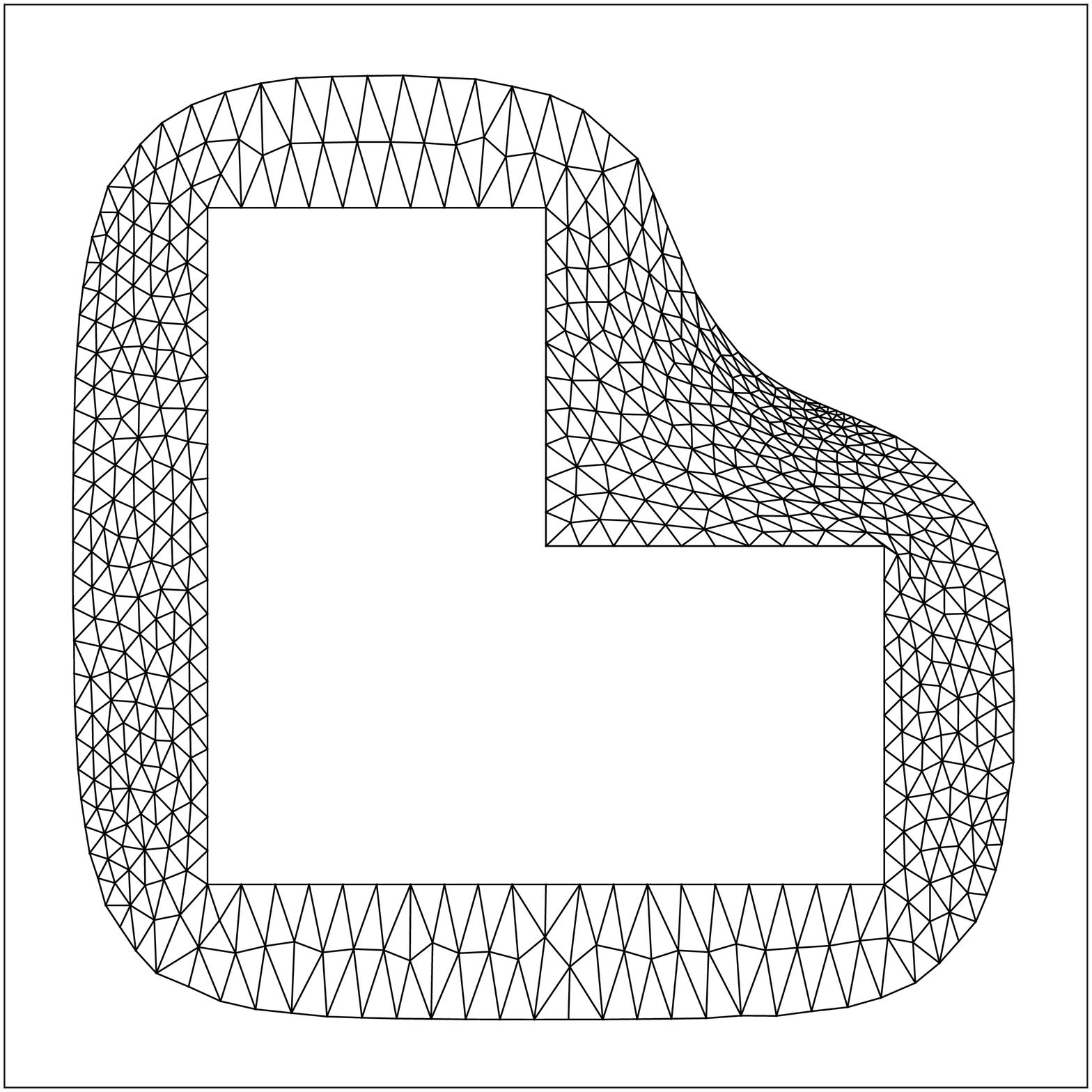}}
                 \caption{Mesh profile of $\Gamma_2^{N_T}$}
                \label{fig:Fig7b}
        \end{subfigure}%
        \hfill
        \begin{subfigure}[b]{0.32\textwidth}
                \centering
                \resizebox{\textwidth}{!}{\includegraphics{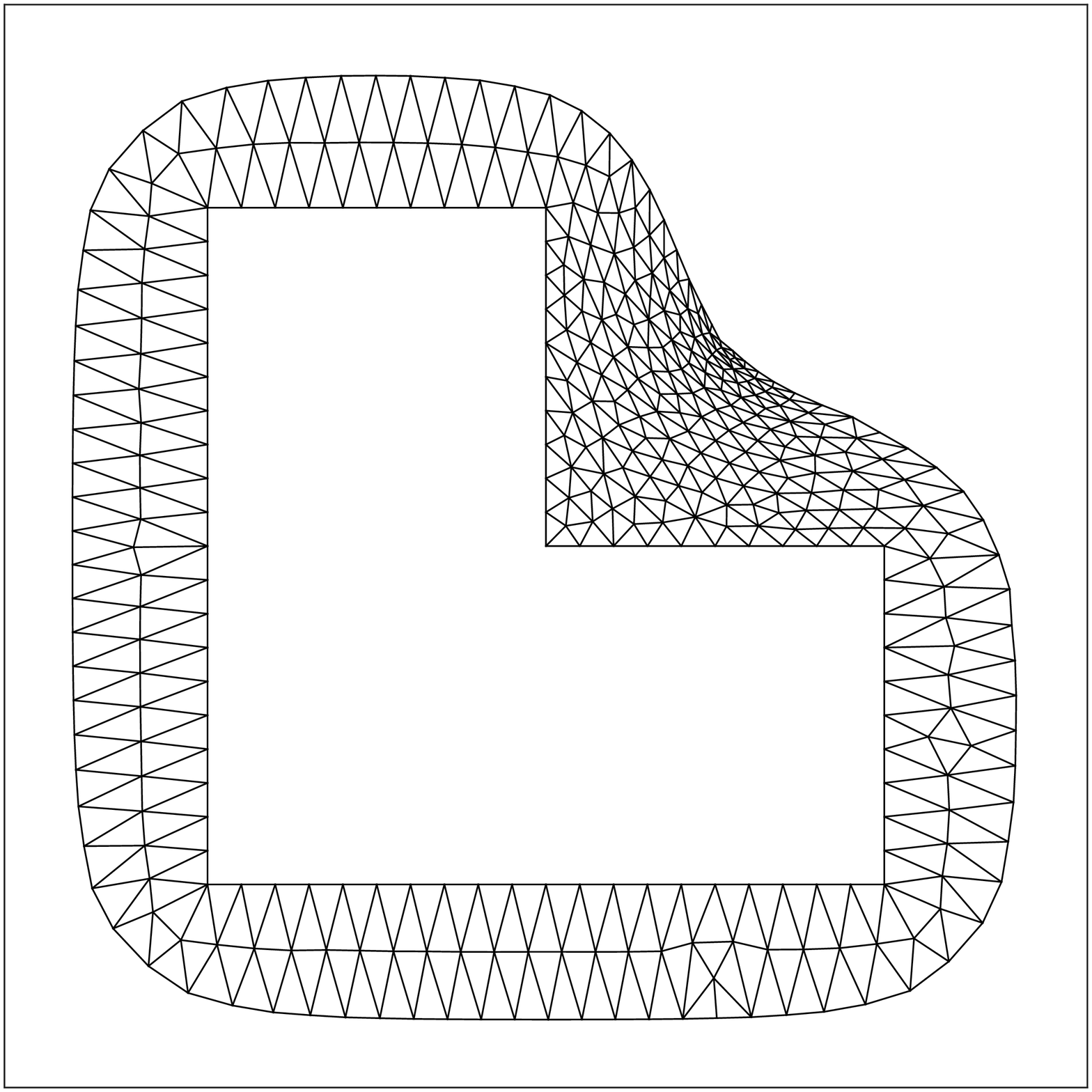}}
                \caption{Mesh profile of $\Gamma_3^{N_T}$}
                \label{fig:Fig7c}
        \end{subfigure}%
\caption{Computational mesh profiles for each test case in Example \ref{example3} at $T=1$}
\label{fig:Fig7}
\end{figure}
%%%%%%%%%%%%%%%%%%%%%%%%%%%%%%%%%%%%%%%%%%%%%%%%%%%%%%%%%%%%
%%% FIGURE 8
\begin{figure}[htbp]
\centering
        \begin{subfigure}[b]{0.32\textwidth}
                \centering
                \resizebox{\textwidth}{!}{\includegraphics{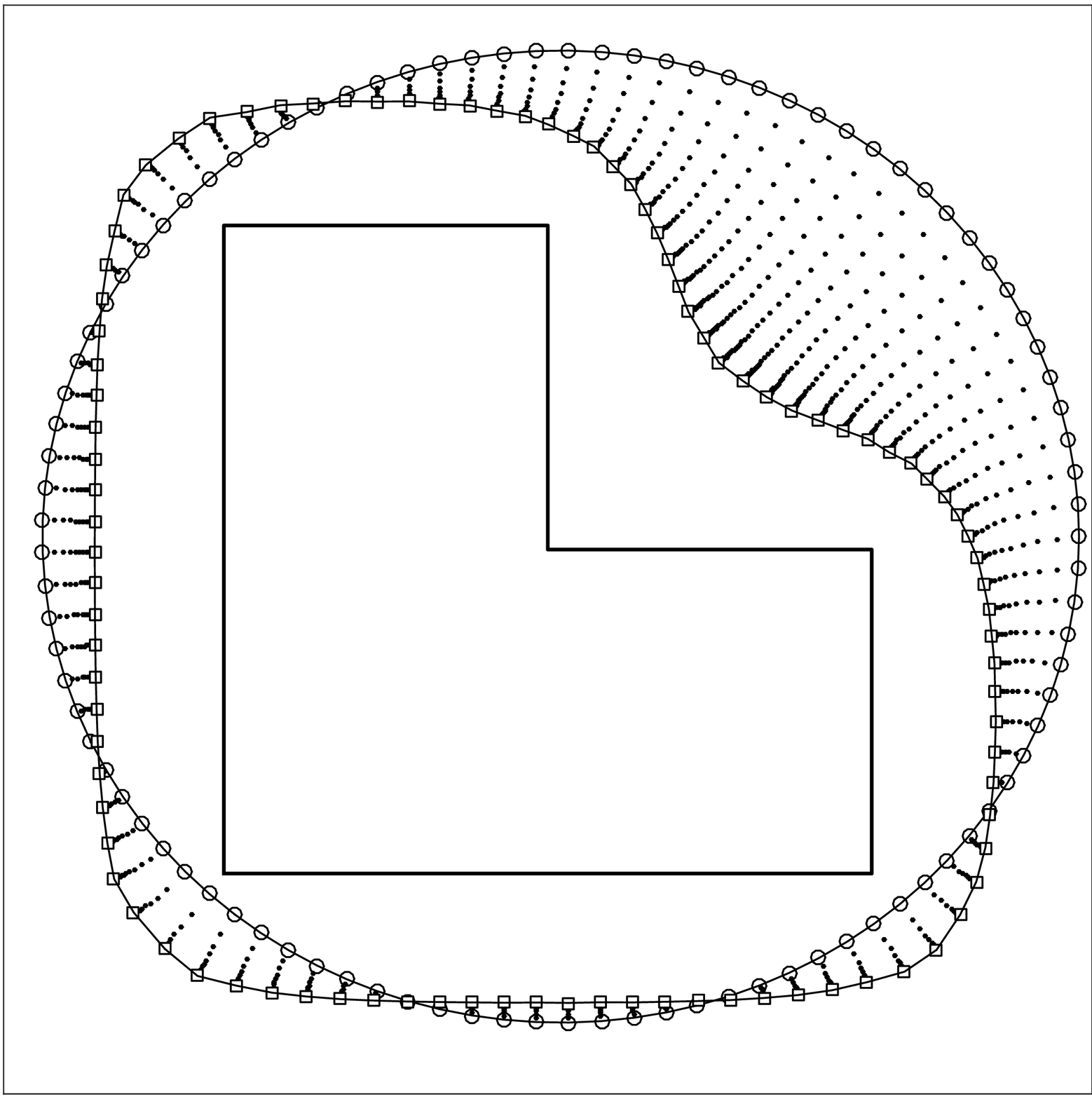}}
                \caption{Case $\Gamma(0)=\Gamma_1^0$}
                \label{fig:Fig8a}
        \end{subfigure}%
        \hfill
	 \begin{subfigure}[b]{0.32\textwidth}
                \centering
                \resizebox{\textwidth}{!}{\includegraphics{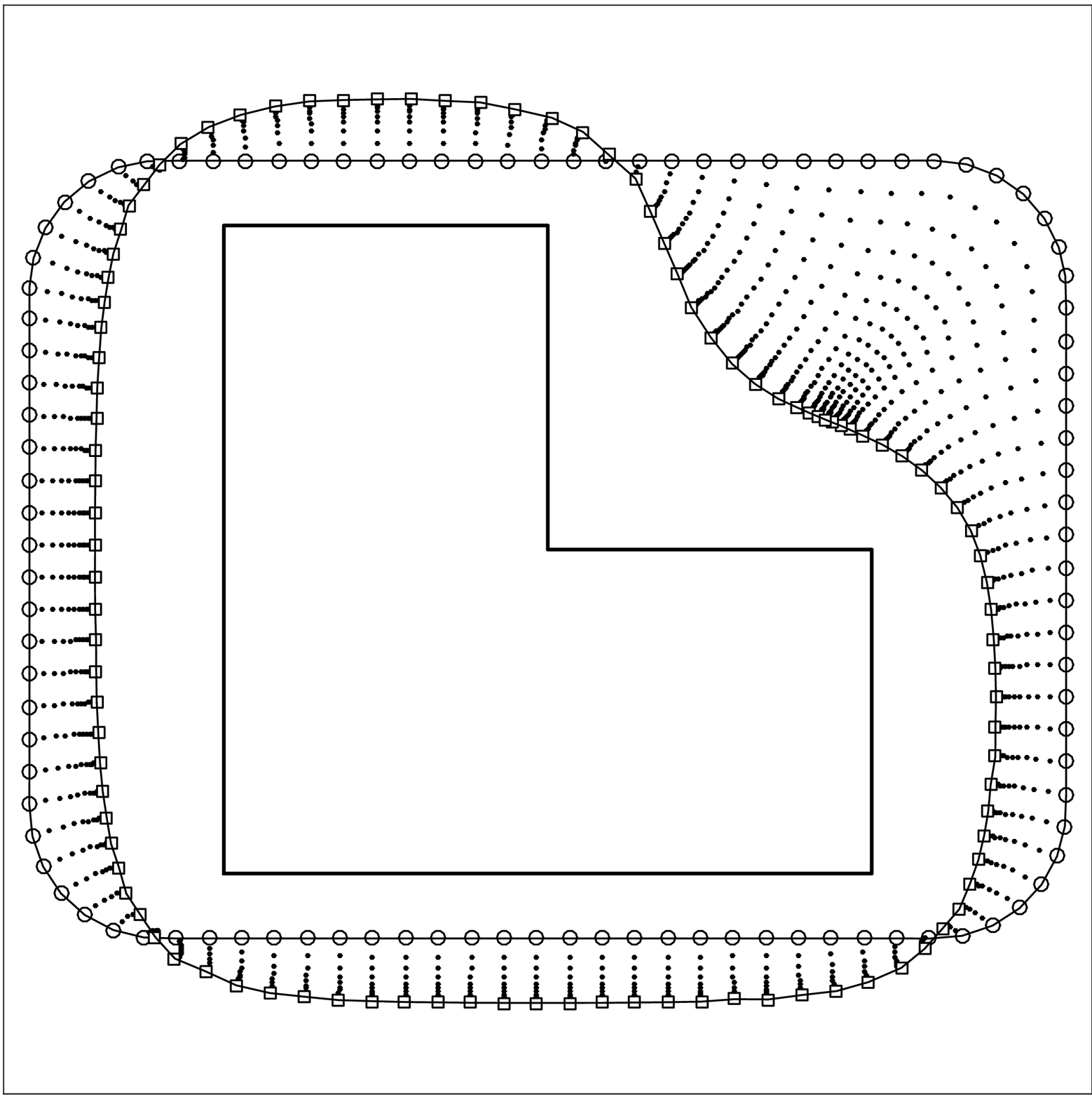}}
                 \caption{Case $\Gamma(0)=\Gamma_2^0$}
                \label{fig:Fig8b}
        \end{subfigure}%
        \hfill
        \begin{subfigure}[b]{0.32\textwidth}
                \centering
                \resizebox{\textwidth}{!}{\includegraphics{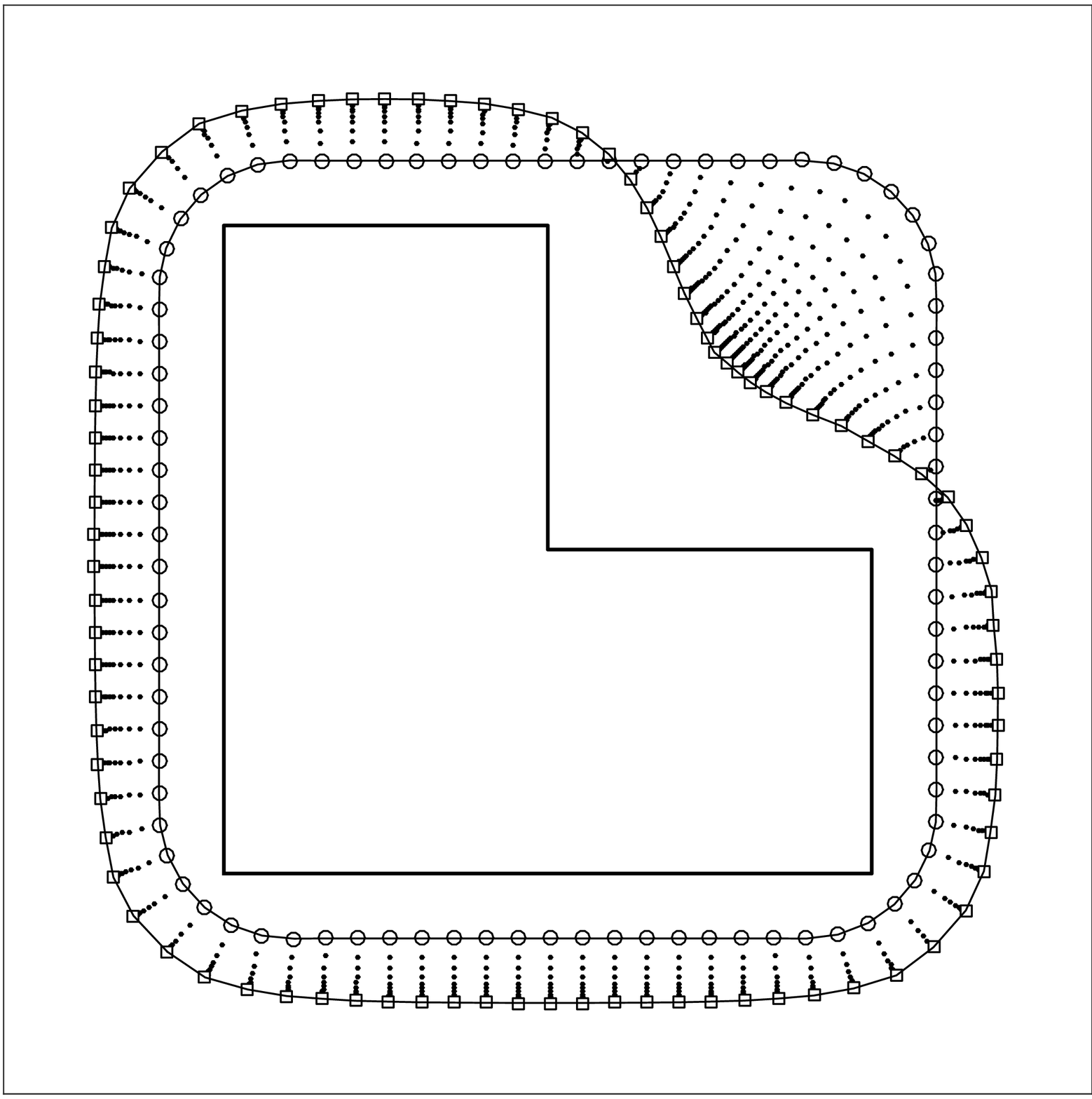}}
                \caption{Case $\Gamma(0)=\Gamma_3^0$}
                \label{fig:Fig8c}
        \end{subfigure}%
\caption{Boundary nodes' trajectories for each test case in Example \ref{example3}}
\label{fig:Fig8}
\end{figure}
%%%%%%%%%%%%%%%%%%%%%%%%%%%%%%%%%%%%%%%%%%%%%%%%%%%%%%%%%%%%
%%% FIGURE 9
\begin{figure}[htbp]
\centering
        \begin{subfigure}[b]{0.5\textwidth}
                \centering
                \resizebox{0.9\textwidth}{!}{\includegraphics{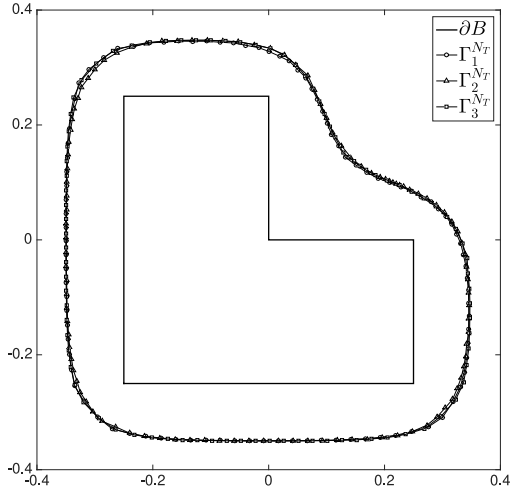}}
                \caption{Computed shapes}
                \label{fig:Fig9a}
        \end{subfigure}%
        \hfill
	 \begin{subfigure}[b]{0.5\textwidth}
                \centering
                \resizebox{0.9\textwidth}{!}{\includegraphics{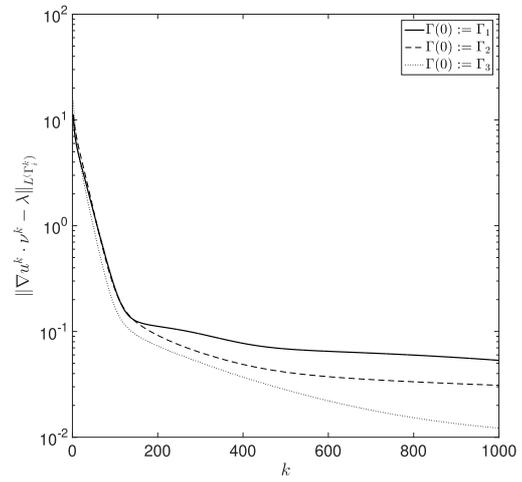}}
                 \caption{Histories of values of $\|\nabla u^k \cdot \nu^k  - \lambda\|_{L^2(\Gamma_i^k)}$}
                \label{fig:Fig9b}
        \end{subfigure}%
\caption{Plot \ref{fig:Fig9a}: Cross comparison of computed shapes at $T=1$; plot \ref{fig:Fig9b}: history of the $L^2$-norm $\|\nabla u^k \cdot \nu^k  - \lambda\|_{L^2(\Gamma_i^k)}$, $i=1,2,3$, for Example \ref{example3}}
\label{fig:Fig9}
\end{figure}
%%%%%%%%%%%%%%%%%%%%%%%%%%%%%%%%%%%%%%%%%%%%%%%%%%%%%%%%%%%%
%-----------------------------------------------------------
\section{Mean Curvature Flow Problem}
\label{sec:MCF}
\subsection{Application of CMM to mean curvature flow problem}

As further application of CMM, we will showcase in this section how CMM can easily be adapted to handle mean curvature flows:
\begin{equation}
  \label{eq:curvature flow}
    V_{n} = -\kappa \qquad \text{on $\Gamma(t)$},
\end{equation}
where, $\kappa$ denotes curvature of $\Gamma(t)$ for $d=2$, or the sum of principal curvature of $\Gamma(t)$ for $d \geqslant 3$.
The corresponding problem under this situation is often referred to in the literature as the \emph{curve shortening problem} when $d=2$ (see, e.g., \cite{GageHamilton1986,Grayson1987}), and is called, in general (i.e., $d \geqslant 3$), as the \emph{mean curvature flow problem} (see, e.g., \cite{Dziuk1991,Huisken1984}).
Here, we use the latter terminology in any dimensional case.
For other numerical methods used to solve the problem such as the CSM coupled with the level-set method, or via a finite element method using approximation by a reaction-diffusion equation, we refer the readers to \cite{KimuraNotsu2002} and \cite{NochettoVerdi1996}, respectively.

Now, let $\kappa^k$ be the curvature of $\Gamma^k = \partial \Omega ^k$. Similarly to \eqref{eq:velocity}, the smooth extension of $V_{n}{\nu}$ according to CMM satisfies the following problem for ${\bb{w}^k_h} : \Omega_h^k\setminus \overline{B_h} \to \mathbb{R}^d$:

\begin{equation}
\label{eq:curv-velocity}
\left\{\arraycolsep=1.4pt\def\arraystretch{1}
\begin{array}{rcll}
  - \Delta {\bb{w}^k} 		&=& \bb{0}	&\quad \text{in $\Omega_h^k \setminus \overline{B_h}$},\\[0.3em]
  {\bb{w}^k} 			&=& \bb{0}	&\quad \text{on $\partial B_h$},\\[0.3em]
  {\varepsilon}  \nabla \bb{w}^k \cdot \nu^k  + {\bb{w}^k} &=& -\kappa^k {\nu}^k		&\quad \text{on $\Gamma_h^k$}.
\end{array}
\right.
\end{equation}

In variational form, the system of partial differential equations \eqref{eq:curv-velocity} is given as follows:
%%% WEAK FORM OF THE MEAN CURVATURE FLOW PROBLEM
find ${\bb{w}^k} \in H^1_{\partial B,\bb{0}}(\Omega^k\setminus \overline{B};\mathbb{R}^d)$ such that
	\begin{align}
   	&\displaystyle \int_{\Omega^k \setminus \overline{B}} \nabla {\bb{w}^k} : \nabla \bb{\varphi} \ {\rm d}x
    		 + \frac{1}{{\varepsilon}} \int_{\Gamma^k}  {\bb{w}^k} \cdot \bb{\varphi}\ {\rm d}s \nonumber\\
			&\displaystyle \hspace{1in} = -\frac{1}{{\varepsilon}} \int_{\Gamma^k} \kappa^k {\nu}^k \cdot \bb{\varphi}\ {\rm d}s\nonumber\\
      &\displaystyle \hspace{1in} = -\frac{1}{{\varepsilon}} \int_{\Gamma^k} \operatorname{div}_{\Gamma} \bb{\varphi} \ {\rm d}s,
    				\quad \forall \bb{\varphi} \in H_{\partial  B,\bb{0}}^1(\Omega^k\setminus \overline{B};\mathbb{R}^d),\label{curvatureflow_weakform}
	\end{align}
   where $\operatorname{div}_{\Gamma}$ denotes the \emph{tangential divergence} (see, e.g., \cite[Chap. 9, Sec. 5.2, eq. (5.6), p. 495]{DelfourZolesio2011} or \cite[Chap. 3, Sec. 1, Def. 2.3, p. 53]{KimuraNotes2008}).
%%%
Evaluating the mean curvature term numerically is quite problematic, especially when implemented in a finite element method.
%%%
Here, however, we point out that to numerically evaluate the integral consisting of the mean curvature $\kappa$, one may utilize the so-called \emph{Gauss-Green formula} on $\Gamma$ (see, e.g., \cite[Chap. 2, Sec. 2, Thm. 2.18, p. 56]{KimuraNotes2008} or \cite[eq. (5.27), p. 498]{DelfourZolesio2011}):
\begin{equation}
	\label{eq:Gauss_Green_formula}
	\int_{{\Gamma}}{\kappa {\nu} \cdot \bb{v}}\ {\rm d}s
		= \int_{{\Gamma}}{ \operatorname{div}_{\Gamma} \bb{v} } \ {\rm d}s,
\end{equation}
which is valid for $C^2$ regular boundary $\Gamma$ and vector-valued function $ \bb{v} : \Gamma \to \mathbb{R}^d$ that belongs at least to $C^1(\Gamma; \mathbb{R}^d)$.
Hence, the variational problem \eqref{curvatureflow_weakform} can be solved at once without the need to evaluate the mean curvature $\kappa^k$ at every time step $k = 0,1,\cdots,N_T$.

%%% REMARK ABOUT FEM IMPLEMENTATION
	To implement in a finite element method the right side integral appearing in the variational problem \eqref{curvatureflow_weakform}, we remark that the identity
	$\operatorname{div}_{\Gamma} \bb{\varphi} = \operatorname{div} \bb{\varphi} -(\bb{\varphi} \cdot \nu) \cdot\nu$ on $\Gamma$, actually holds for smooth $\Gamma$ and $\bb{\varphi} : \overline{\Omega} \to \mathbb{R}^d$.
	So, for a polygonal mesh $\Omega_h$ and $\Gamma_h:= \partial \Omega_h$, with triangular mesh $\mathcal{T}_h$ and element $\varphi_{ih} \in P_l(\mathcal{T}_h)$ ($i=1, 2, \ldots, d$), $l \in \mathbb{N}$, we have
	\[
	 \int_{\Gamma_h} \operatorname{div}_{\Gamma_h} \bb{\varphi}_h \ {\rm d}s
	 	=  \int_{\Gamma_h} \left( \operatorname{div} \bb{\varphi}_h - \frac{\partial \bb{\varphi}_h}{\partial \nu} \cdot\nu \right) {\rm d}s	.
	\]

%%% EXAMPLE 4
\begin{numex}
\label{example4}
With the above at hand, we perform a numerical experiment for the mean curvature flow problem which we execute under the following conditions: ${\varepsilon}  = 0.1$, $\tau = 5 \cdot 10^{-4}$, with the maximum mesh size of width $h \approx 0.2$, $t \in [0,T]$, $T=1$,
$\Omega_0 := \left\{ (r,\theta) \in \mathbb{R}^2 \middle\vert\ 0 \leqslant r < \frac{2}{2-\cos(5\theta)},\ 0 \leqslant \theta \leqslant 2\pi \right\}$,
and $\overline{B}$ is the circle $C(\bb{0},0.5)$ as in Example \ref{example2}.
\end{numex}
%%% END OF EXAMPLE 4
%%%
The results of the experiment are summarized in Fig. \ref{fig:Fig10}.
Here, the initial, plotted with its mesh triangulation, is shown in Fig. \ref{fig:Fig10a}.
Fig. \ref{fig:Fig10b}, on the other hand, plots the mesh profile at selected time steps.
The third figure, Fig. \ref{fig:Fig10c}, depicts the evolution of the moving boundary from its initial profile (outermost exterior boundary) up to its final shape (innermost exterior boundary), and at some intermediate time steps.
Fig. \ref{fig:Fig10d} again plots the time evolution of the moving boundary, but now viewed on the first quadrant and with emphasis to the location of the boundary nodes at time steps $k = 100j$, for $j=0, 1, \ldots, 20$.
As expected, the curvature flow equation $V_{n} = -\kappa$ on $\Gamma(t)$ has the effect of flattening uneven parts of the boundary, hence shrinking the whole domain into the geometric profile of the interior boundary $\partial B$ (as evident in the figures), after a sufficiently large time has passed.
%%%%%%%%%%%%%%%%%%%%%%%%%%%%%%%%%%%%%%%%%%%%%%%%%%%%%%%%%%%%
%%% FIGURE 10
\begin{figure}[htbp]
\centering
        \begin{subfigure}[b]{0.5\textwidth}
                \centering
                \resizebox{0.9\textwidth}{!}{\includegraphics{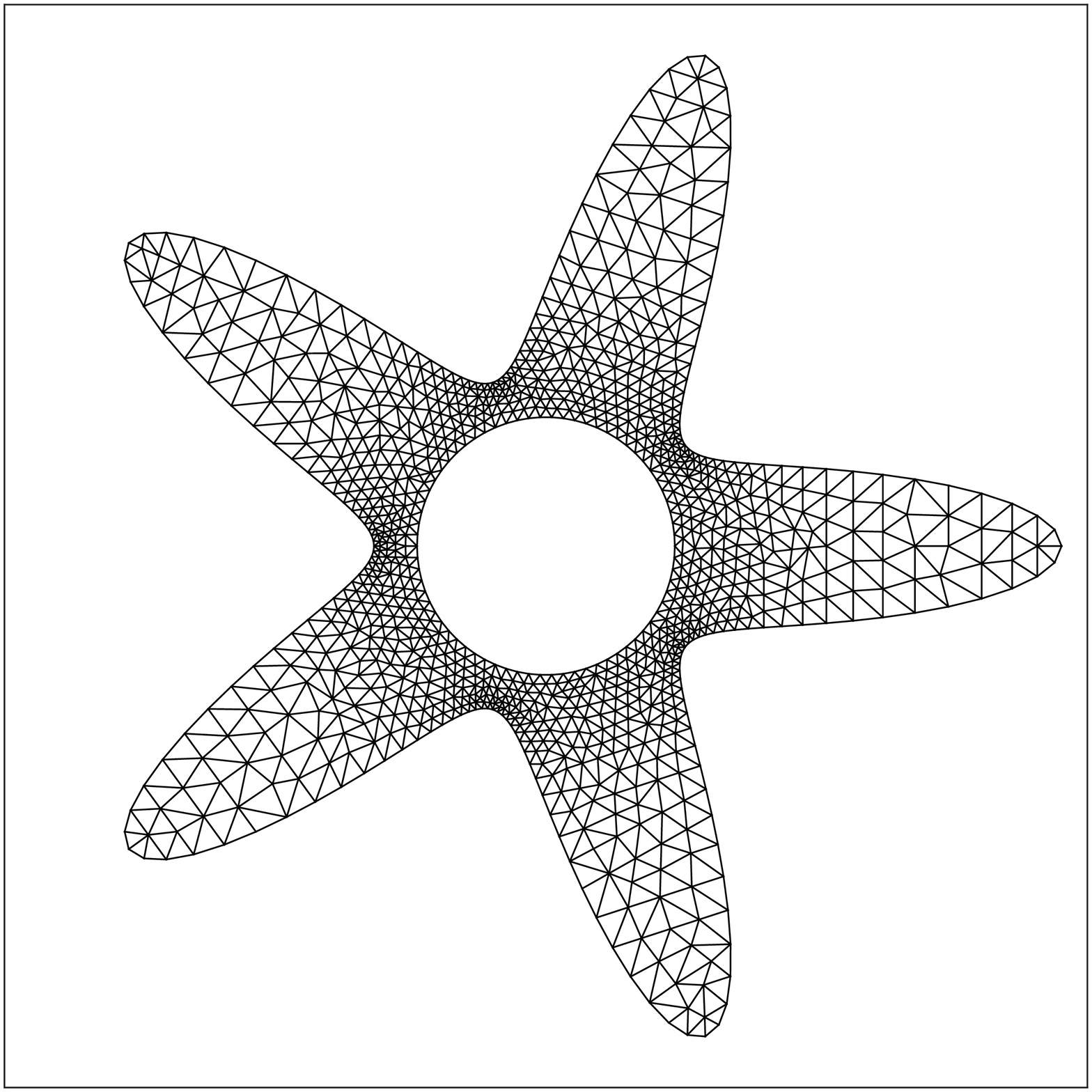}}
                \caption{Initial mesh profile of $\Omega_h^0$}
                \label{fig:Fig10a}
        \end{subfigure}%
        \hfill
	 \begin{subfigure}[b]{0.5\textwidth}
                \centering
                \resizebox{0.445\textwidth}{!}{\includegraphics{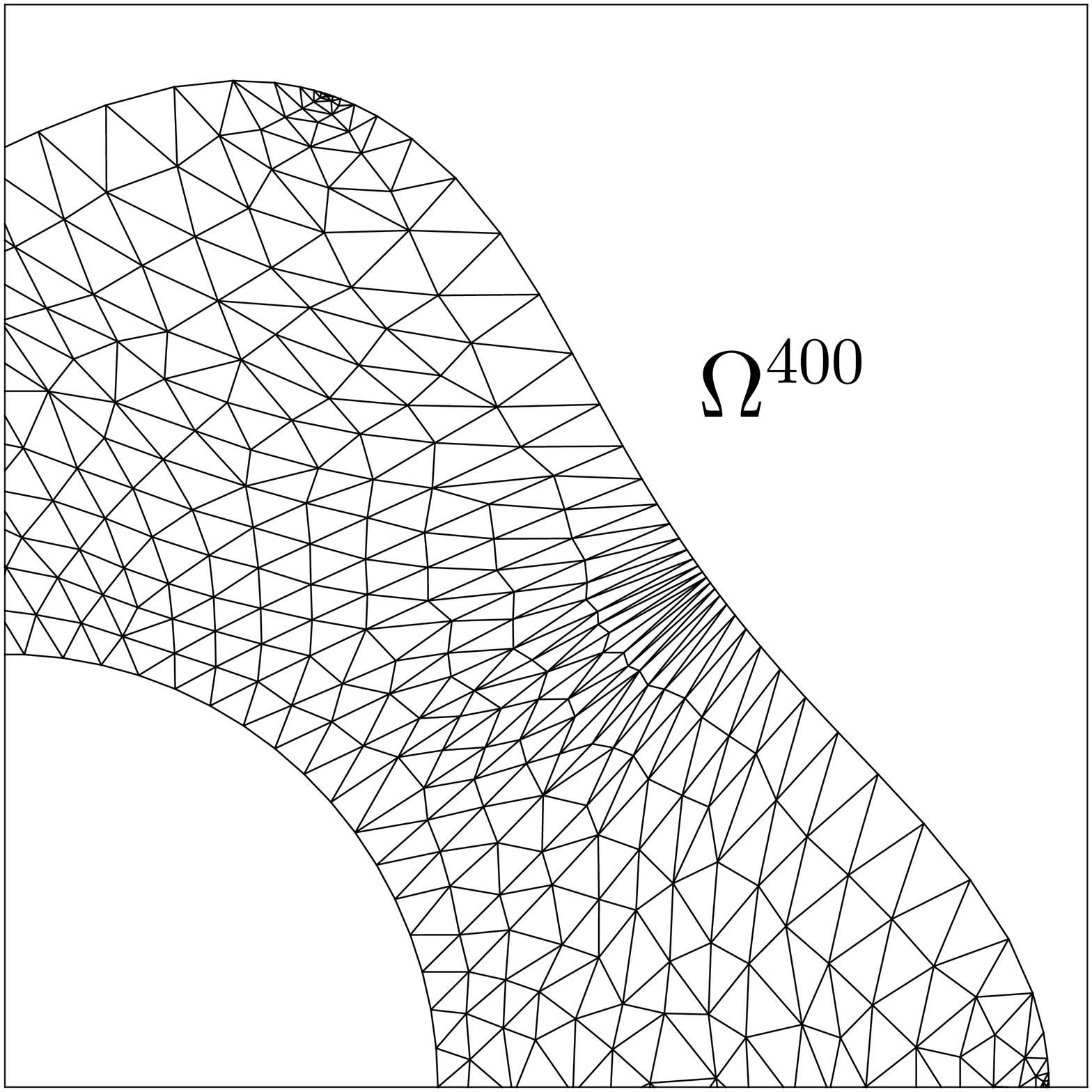}}
                \resizebox{0.445\textwidth}{!}{\includegraphics{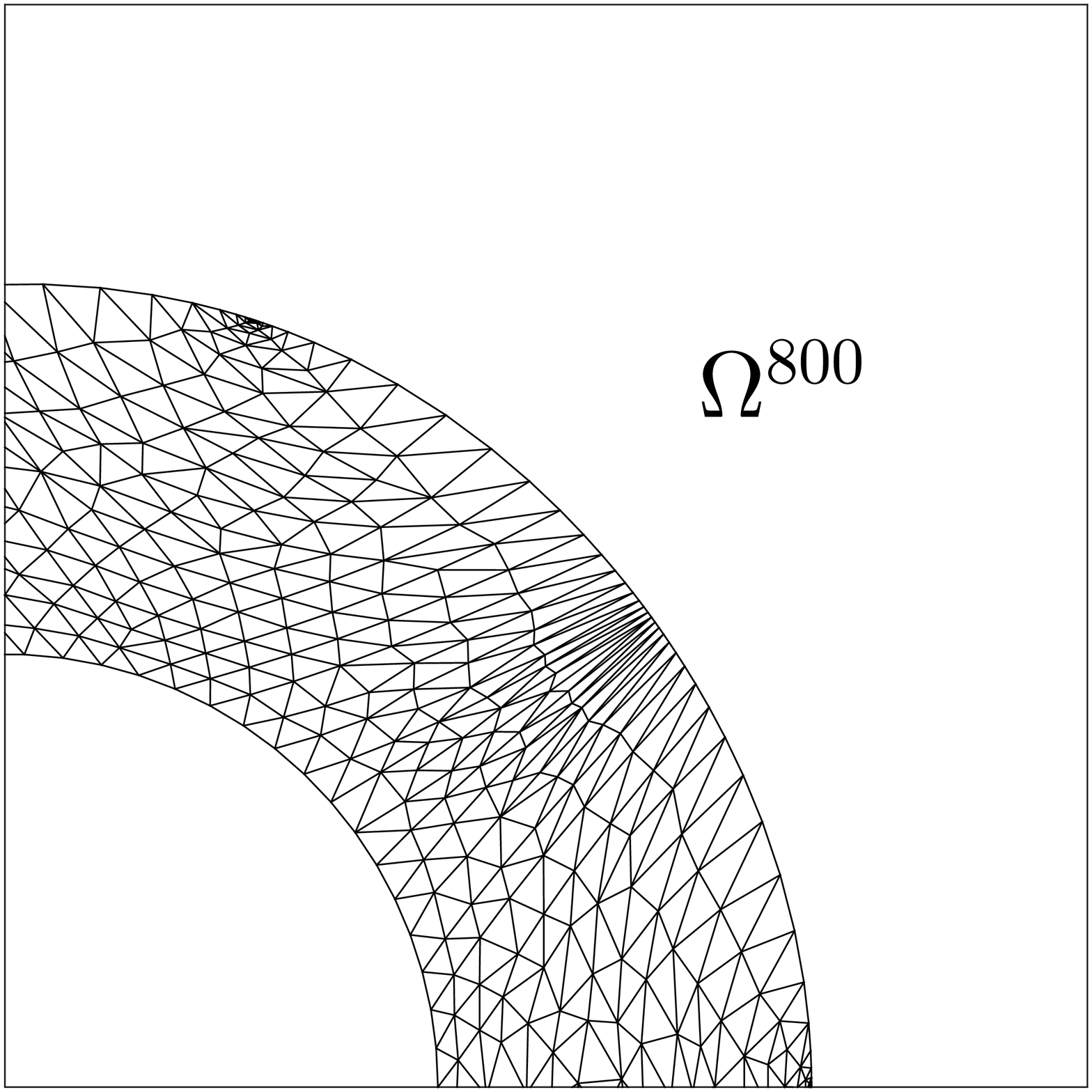}}
                \vskip1pt
                \resizebox{0.445\textwidth}{!}{\includegraphics{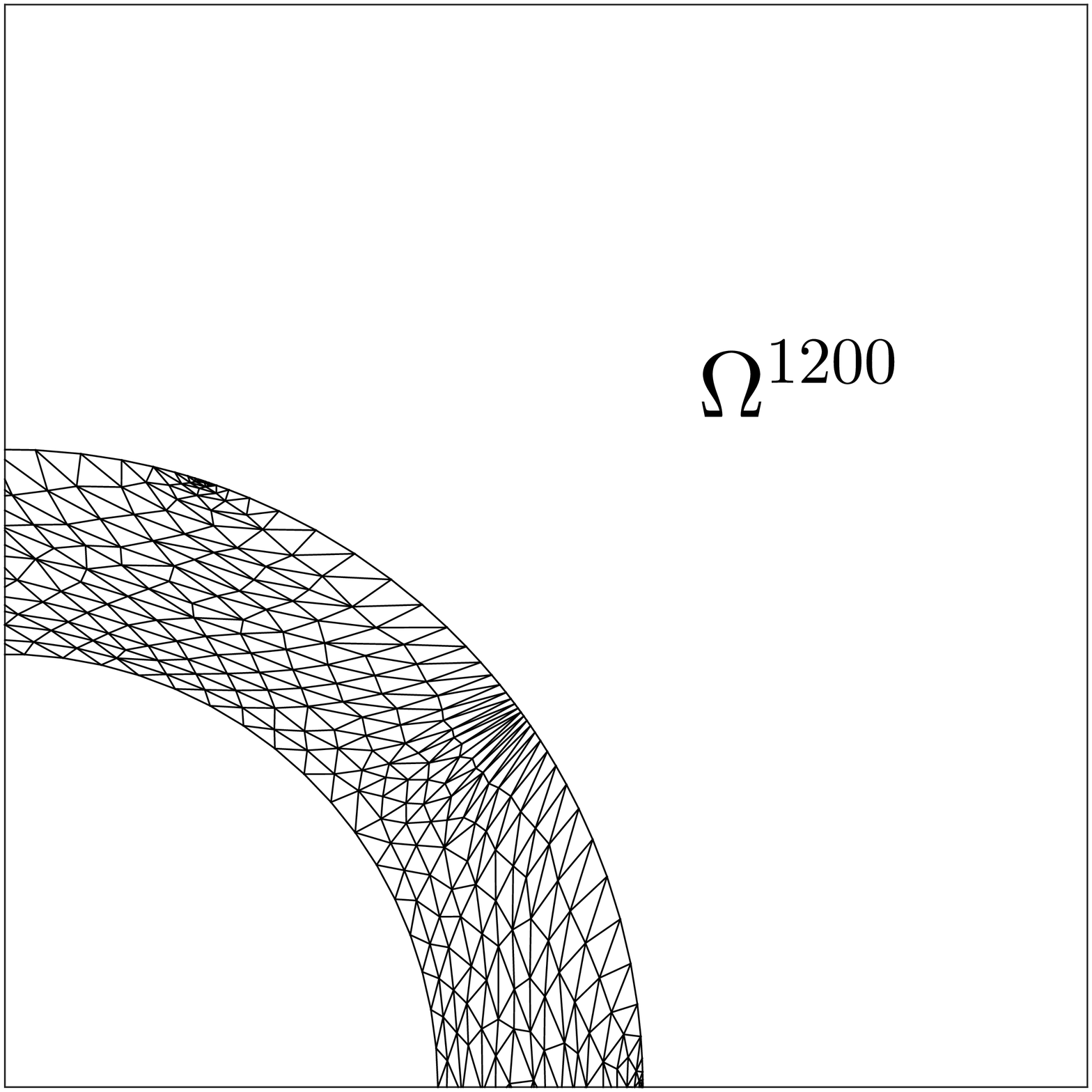}}
                \resizebox{0.445\textwidth}{!}{\includegraphics{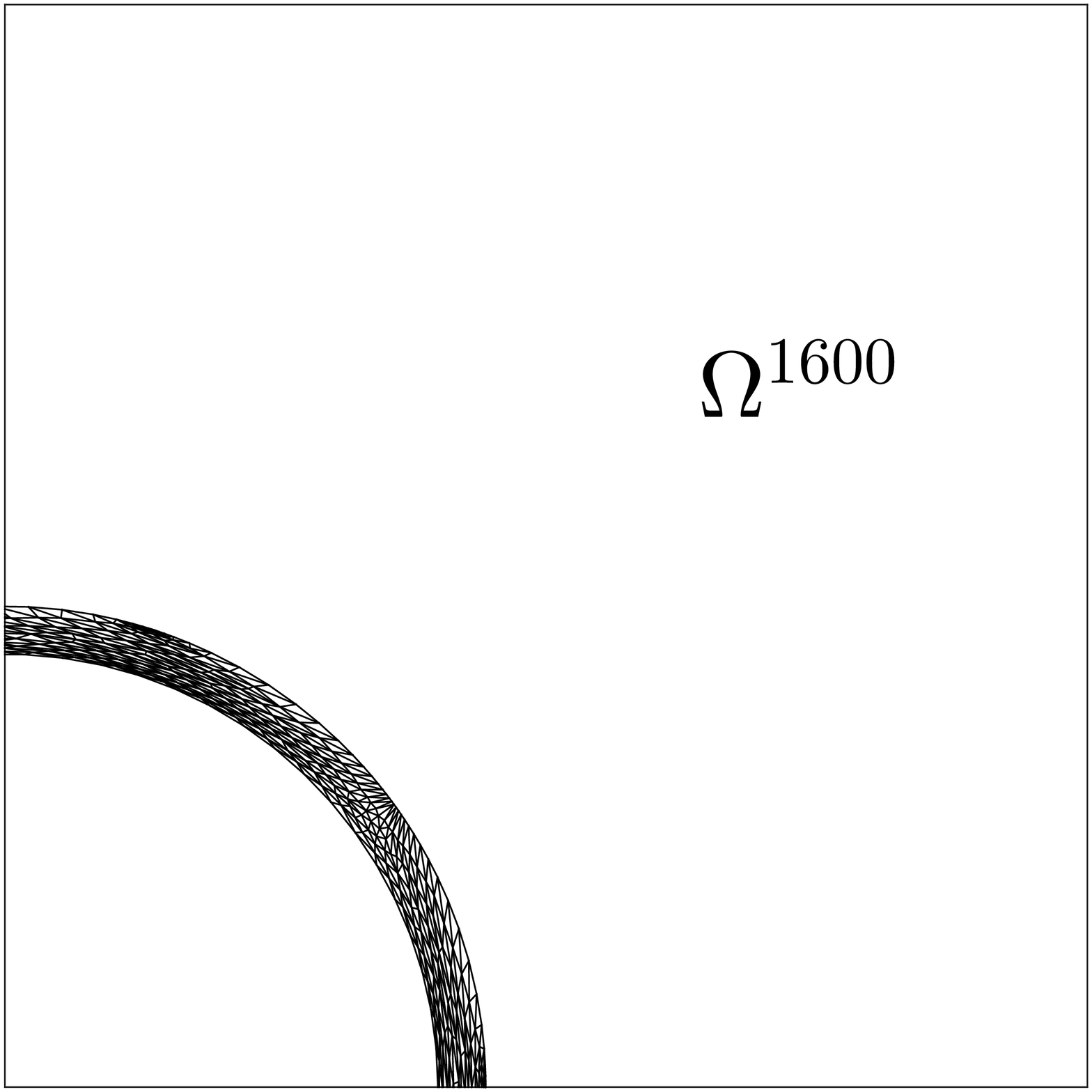}}
                 \caption{Mesh profile at $k=400, 800, 1200, 1600$}
                \label{fig:Fig10b}
        \end{subfigure}%
        \par\bigskip
        \begin{subfigure}[b]{0.5\textwidth}
                \centering
                \resizebox{0.9\textwidth}{!}{\includegraphics{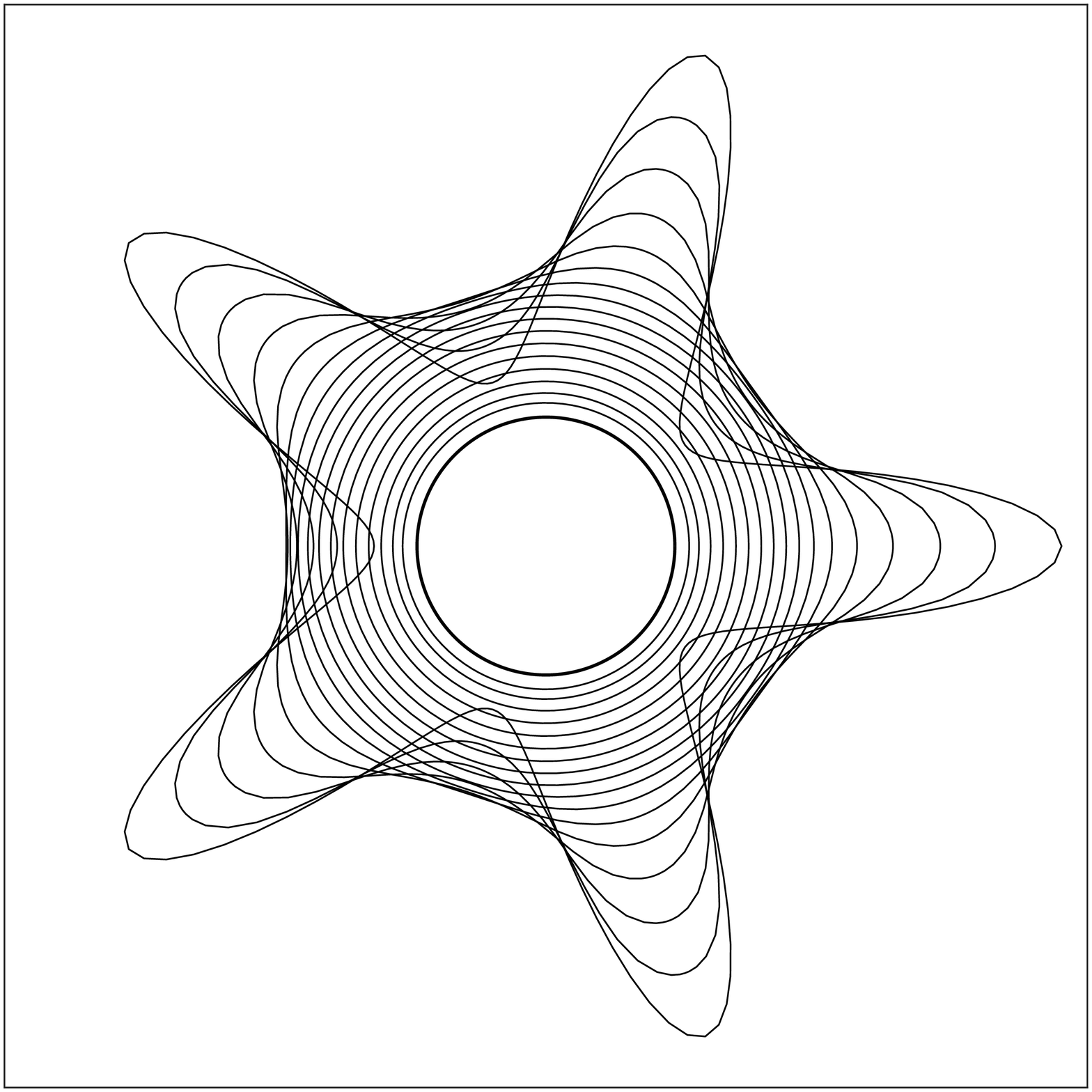}}
                \caption{Time evolution of the moving boundary}
                \label{fig:Fig10c}
        \end{subfigure}%
        \hfill
        \begin{subfigure}[b]{0.5\textwidth}
                \centering
                \resizebox{0.9\textwidth}{!}{\includegraphics{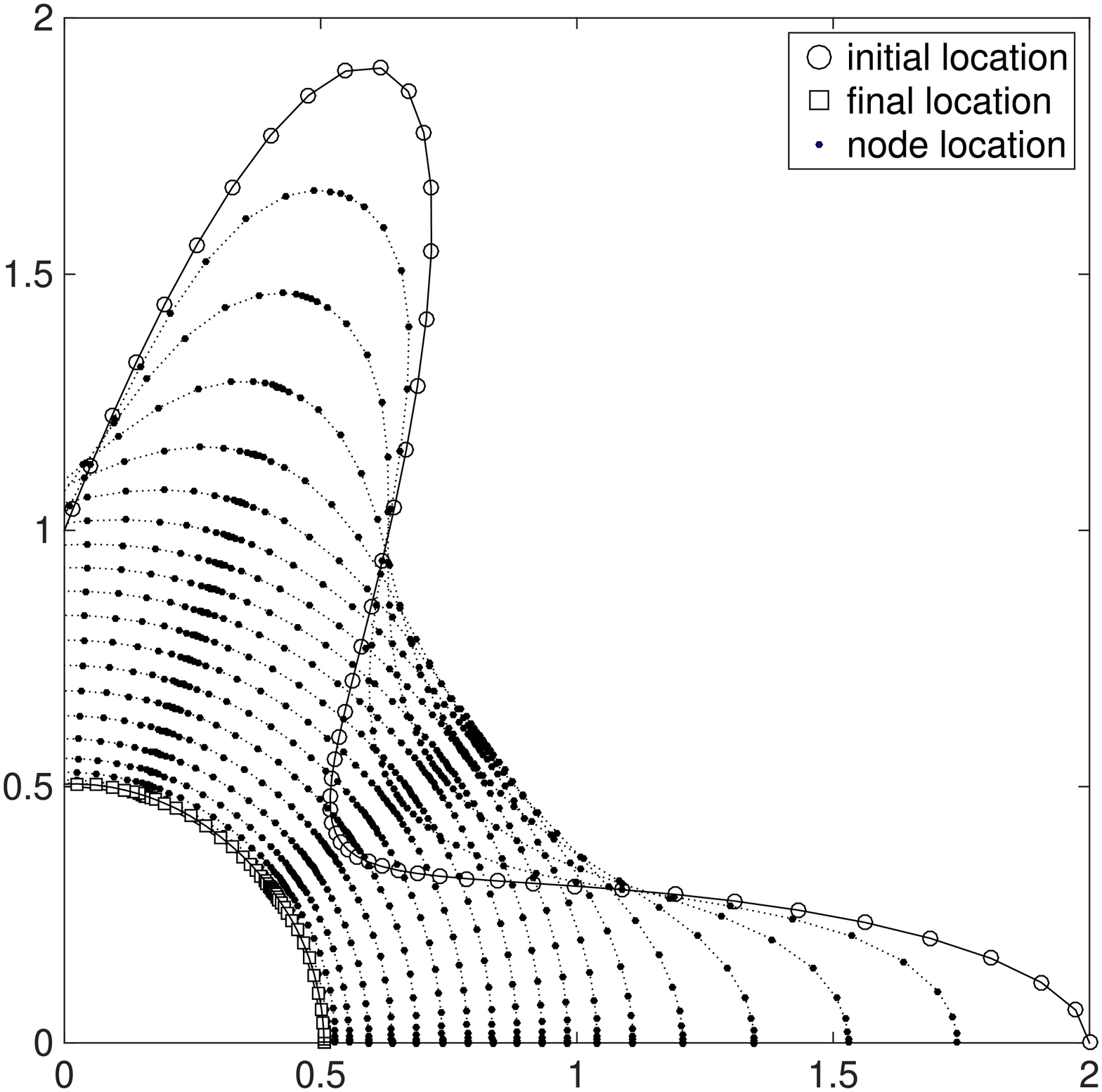}}
                \caption{Boundary nodes location at selected $k$s}
                \label{fig:Fig10d}
        \end{subfigure}%
\caption{Computational results for Example \ref{example4}}
\label{fig:Fig10}
\end{figure}
%%%%%%%%%%%%%%%%%%%%%%%%%%%%%%%%%%%%%%%%%%%%%%%%%%%%%%%%%%%%
%-----------------------------------------------------------
\subsection{EOC of CMM for the mean curvature flow problem}
\label{sec:EOC-CF}

We also check the accuracy of CMM for curvature flows in the same way as in subsection \ref{sec:EOC-HS}.
That is, we construct a manufactured solution and then compare the numerical solution obtained through the proposed scheme.
For this purpose, we state the following construction of the appropriate manufactured solution.
%%% MEAN CURVATURE FLOW PROBLEM
\begin{proposition}\label{prop:MS-CF}
  We suppose $\phi(x,t)$ is a smooth function with $\phi<0$ for $x \in \overline{B}$ and $|\nabla \phi| \neq 0$ on $\{\phi = 0\}$ for $t \in [0,T]$.
  We define $g:\mathbb{R}^d \times [0,T] \to \mathbb{R}$ as $g:= -\frac{\phi _t}{|\nabla \phi|} + \frac{\Delta \phi}{|\nabla \phi|} - \frac{((D^2 \phi)\nabla \phi) \cdot \nabla \phi}{|\nabla \phi|^3} $,
  and $\Omega_0:=\{ \phi(x,0)<0 \}$.
  Then, the moving domain $\Omega (t):= \{ x \in \mathbb{R}^d \mid \phi(x,t) < 0) \}$ satisfy $V_n = - \kappa  + g$ on $\Gamma(t)$, $t \in [0,T]$, and $\Omega(0)=\Omega_0$.
\end{proposition}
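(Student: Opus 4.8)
The plan is to mirror the short computation used in the proof of Proposition \ref{prop:Manufactured1}, reusing the level-set identities already invoked there. For the moving domain $\Omega(t) = \{\phi(\cdot,t) < 0\}$ with boundary $\Gamma(t) = \{\phi(\cdot,t) = 0\}$, the hypothesis $|\nabla \phi| \neq 0$ on $\{\phi = 0\}$ guarantees that the outward unit normal and the normal velocity are well-defined on $\Gamma(t)$ and given by
\[
  {\nu} = \frac{\nabla \phi}{|\nabla \phi|}, \qquad V_{n} = -\frac{\phi_t}{|\nabla \phi|}.
\]
The initial condition $\Omega(0) = \Omega_0$ then holds by definition, so the only substantive task is to verify the relation $V_{n} = -\kappa + g$ on $\Gamma(t)$.

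The essential ingredient is the level-set formula for the mean curvature $\kappa$ (the sum of the principal curvatures), which I would obtain from $\kappa = \operatorname{div} {\nu}$. First I would expand
\[
  \operatorname{div}\!\left(\frac{\nabla \phi}{|\nabla \phi|}\right)
    = \frac{\Delta \phi}{|\nabla \phi|} + \nabla \phi \cdot \nabla\!\left(\frac{1}{|\nabla \phi|}\right),
\]
and then use the identity $\nabla |\nabla \phi| = (D^2 \phi)\nabla \phi / |\nabla \phi|$ (obtained by differentiating $|\nabla\phi| = (\nabla\phi\cdot\nabla\phi)^{1/2}$) to get $\nabla(1/|\nabla \phi|) = -(D^2 \phi)\nabla \phi / |\nabla \phi|^3$. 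Substituting yields
\[
  \kappa = \frac{\Delta \phi}{|\nabla \phi|} - \frac{\big((D^2 \phi)\nabla \phi\big) \cdot \nabla \phi}{|\nabla \phi|^3}.
\]

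Comparing this expression with the definition of $g$, one reads off immediately that $g = -\phi_t/|\nabla \phi| + \kappa = V_{n} + \kappa$; rearranging gives $V_{n} = -\kappa + g$ on $\Gamma(t)$, which is the claim. The whole argument is a direct computation with no genuine obstacle; the only point requiring care is the derivation of the curvature formula, in particular correctly tracking the Hessian term that arises from differentiating $1/|\nabla \phi|$ and confirming the sign convention so that $\kappa$ as the sum of principal curvatures matches the geometric convention adopted in \eqref{eq:curvature flow} (a quick check with a shrinking sphere $\phi = |x| - R$, where $\kappa = (d-1)/R > 0$, fixes the sign). It is worth noting at the end that $g$ is smooth near $\Gamma(t)$ precisely because $|\nabla \phi| \neq 0$ there, and that orienting $\phi < 0$ inside $\Omega(t)$ makes ${\nu}$ the genuine outward normal, keeping the signs consistent throughout.
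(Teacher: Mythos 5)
Your proposal is correct and takes essentially the same route as the paper, whose proof simply states that the result follows from the straightforward computation of $V_n$, $\nu$, and $\kappa$ in terms of the level set function $\phi$; you have just written out those level-set identities ($V_n = -\phi_t/|\nabla\phi|$, $\nu = \nabla\phi/|\nabla\phi|$, $\kappa = \operatorname{div}(\nabla\phi/|\nabla\phi|) = \Delta\phi/|\nabla\phi| - ((D^2\phi)\nabla\phi)\cdot\nabla\phi/|\nabla\phi|^3$) explicitly and verified $g = V_n + \kappa$. The sign check via the shrinking sphere is a sensible addition but not a departure from the paper's argument.
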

\begin{proof}
The proposition easily follows from straightforward computation of $V_n$, ${\nu}$, and the mean curvature $\kappa$ in terms of the level set function $\phi$.
\end{proof}
%%%
We now examine the EOC of the scheme when applied to solving the mean curvature problem using Proposition \ref{prop:MS-CF}.
In this experiment, the domains are initially discretized with uniform mesh size of width $h \approx 100 \times \tau$, and we set $\tau = 1/(100 \cdot 2^m)$, where $m=0,1,\ldots,5$.
The results are depicted in Fig. \ref{fig:Fig11}.
We observe from Fig. \ref{fig:Fig11a} an EOC of order one for $\tau$ against the boundary error ${\rm err}_{\Gamma}$.
On the other hand, it seems that, for $h \approx 100 \times \tau$, we only have a sub-linear order for the EOC with respect to $\varepsilon$ against ${\rm err}_{\Gamma}$.
In fact, the plot shown in Fig. \ref{fig:Fig11b} shows that the behavior due to the change of $\varepsilon$ is similar to Fig. \ref{fig:Fig5b}.
This is because CMM is an explicit method, and since the right side of the variational problem \eqref{curvatureflow_weakform} contains the mean curvature term which is a second derivative, then the time step size $\tau$ must be well less than $h$ to stabilize the numerical calculation.
In relation to this, notice in Fig. \ref{fig:Fig11a} that there is no corresponding error value for $\tau = 1/(100 \cdot 2^5)$ in case of $\varepsilon = 10^{-3}$.
This is because the scheme is becoming unstable after several time steps under this set of parameter values, causing the algorithm to stop.
So, for these reasons, we perform another experiment where $h \approx 200 \times \tau$ and consider different values for $\tau$.
The results are summarized in Fig. \ref{fig:Fig12} where we now observe an almost linear convergence behavior of the scheme with respect to $\varepsilon$ against the boundary ${\rm err}_{\Gamma}$ as conspicuous in Fig. \ref{fig:Fig12b}.
However, for small times steps, ${\rm err}_{\Gamma}$ is already saturated for $\tau$ of magnitude around or less than $10^{-3}$ as evident in Fig. \ref{fig:Fig12a}.
Nevertheless, the error values became smaller, which implies that the numerical solution is improved by taking sufficiently small time steps.
%%%%%%%%%%%%%%%%%%%%%%%%%%%%%%%%%%%%%%%%%%%%%%%%%%%%%%%%%%%%
%%% FIGURE 11
\begin{figure}[htbp]
\centering
        \begin{subfigure}[b]{0.49\textwidth}
                \centering
                \resizebox{\textwidth}{!}{\includegraphics{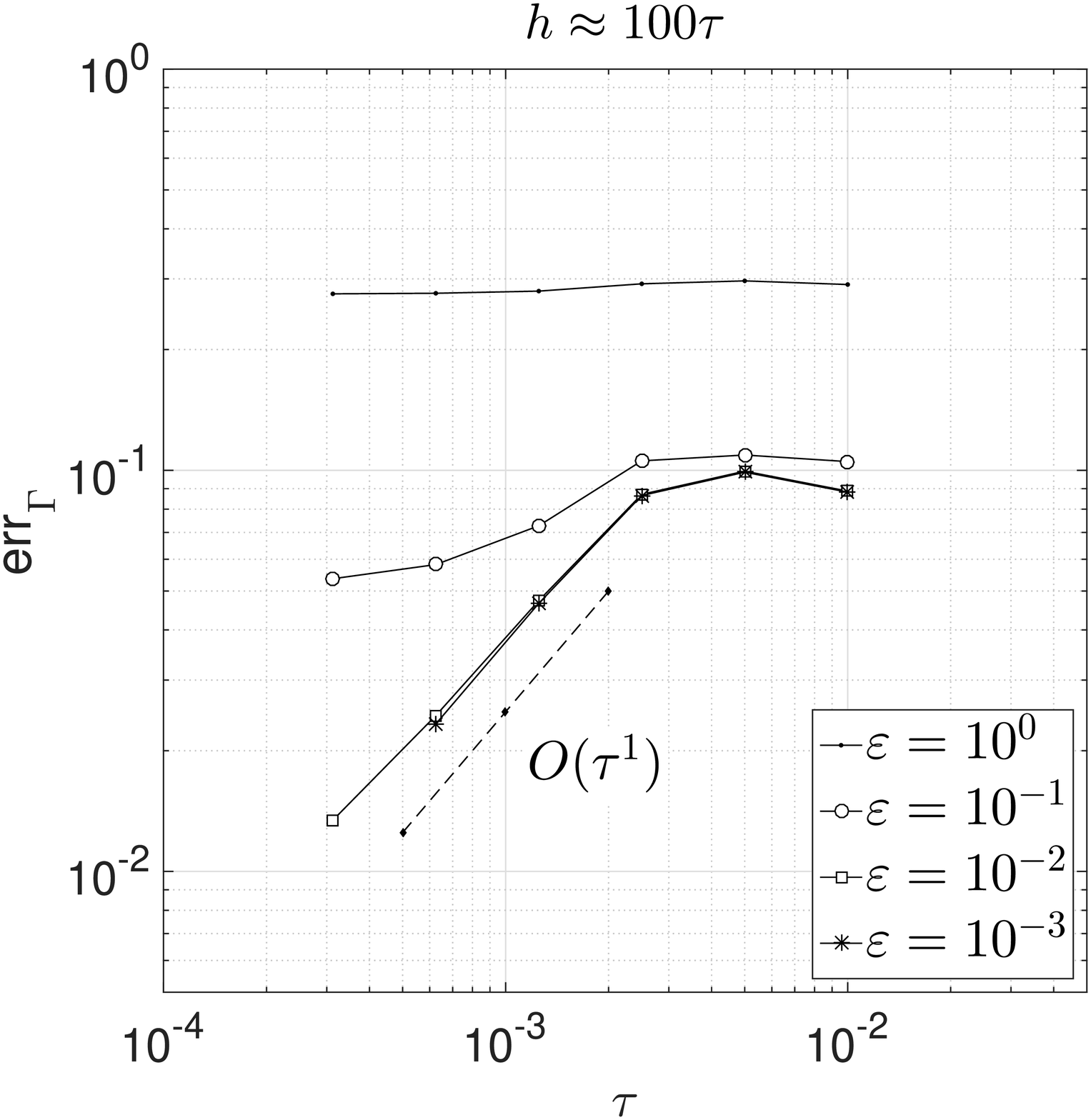}}
                \caption{$\tau$ vs $\operatorname{err}_{\Gamma}$ }
                \label{fig:Fig11a}
        \end{subfigure}%
        \hfill
	 \begin{subfigure}[b]{0.49\textwidth}
                \centering
                \resizebox{\textwidth}{!}{\includegraphics{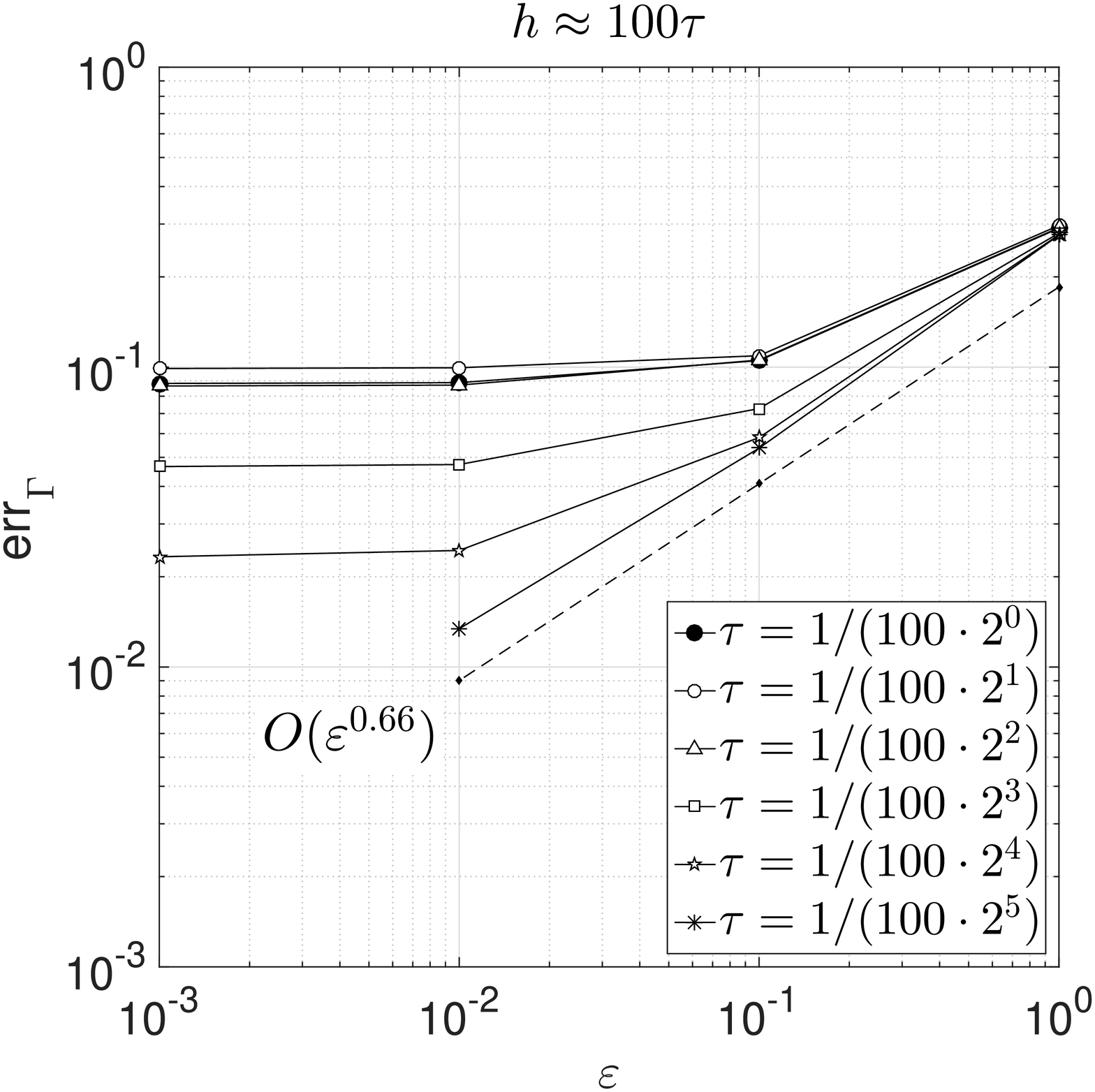}}
                \caption{$\varepsilon$ vs $\operatorname{err}_{\Gamma}$ }
                \label{fig:Fig11b}
        \end{subfigure}%
\caption{Error of convergences when $h \approx 100 \times \tau$}
\label{fig:Fig11}
\end{figure}
%%%%%%%%%%%%%%%%%%%%%%%%%%%%%%%%%%%%%%%%%%%%%%%%%%%%%%%%%%%%
\vskip-15pt
%%%%%%%%%%%%%%%%%%%%%%%%%%%%%%%%%%%%%%%%%%%%%%%%%%%%%%%%%%%%
%%% FIGURE 12
\begin{figure}[htbp]
\centering
        \begin{subfigure}[b]{0.49\textwidth}
                \centering
                \resizebox{\textwidth}{!}{\includegraphics{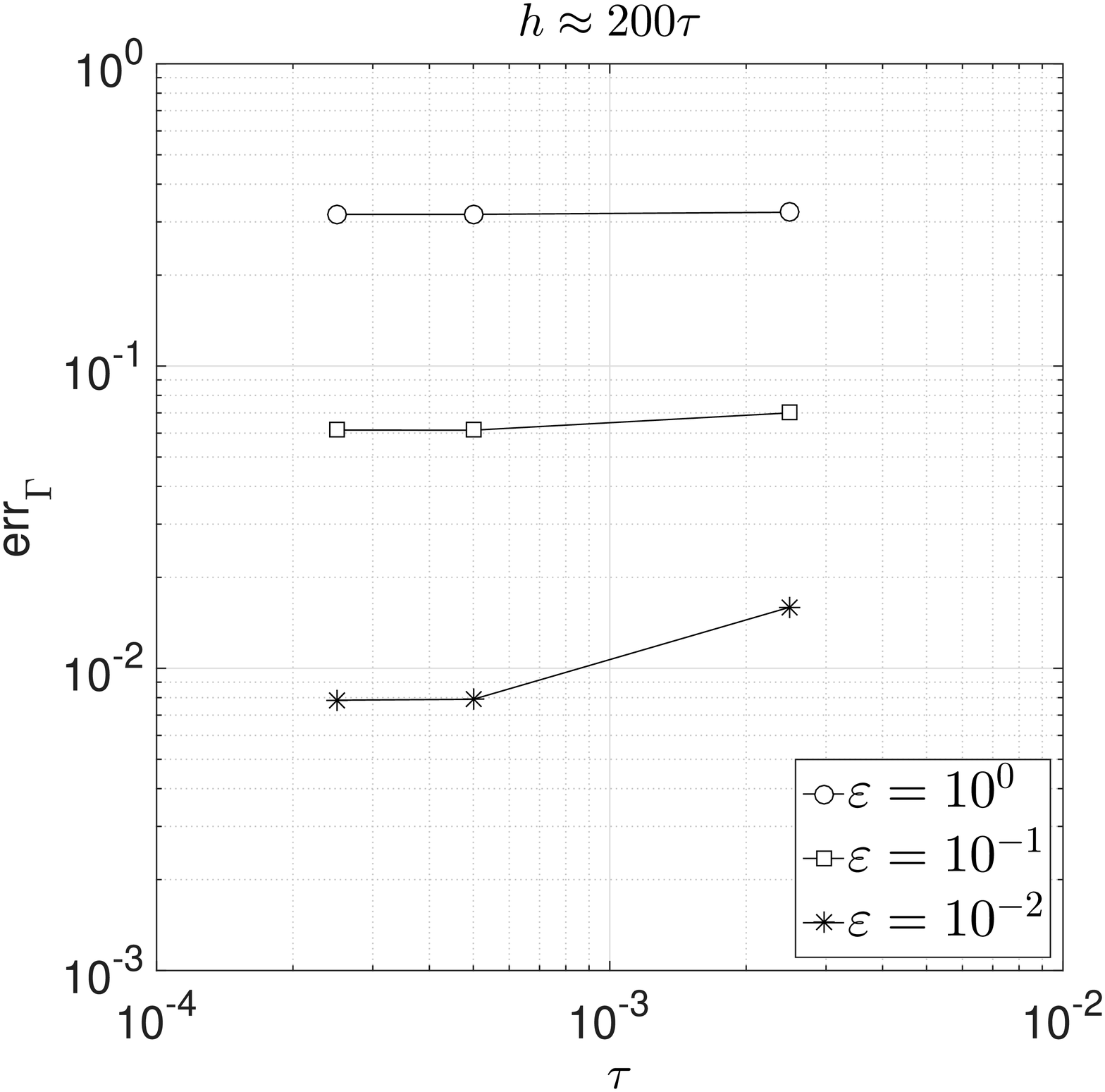}}
                \caption{$\tau$ vs $\operatorname{err}_{\Gamma}$ }
                \label{fig:Fig12a}
        \end{subfigure}%
        \hfill
	 \begin{subfigure}[b]{0.49\textwidth}
                \centering
                \resizebox{\textwidth}{!}{\includegraphics{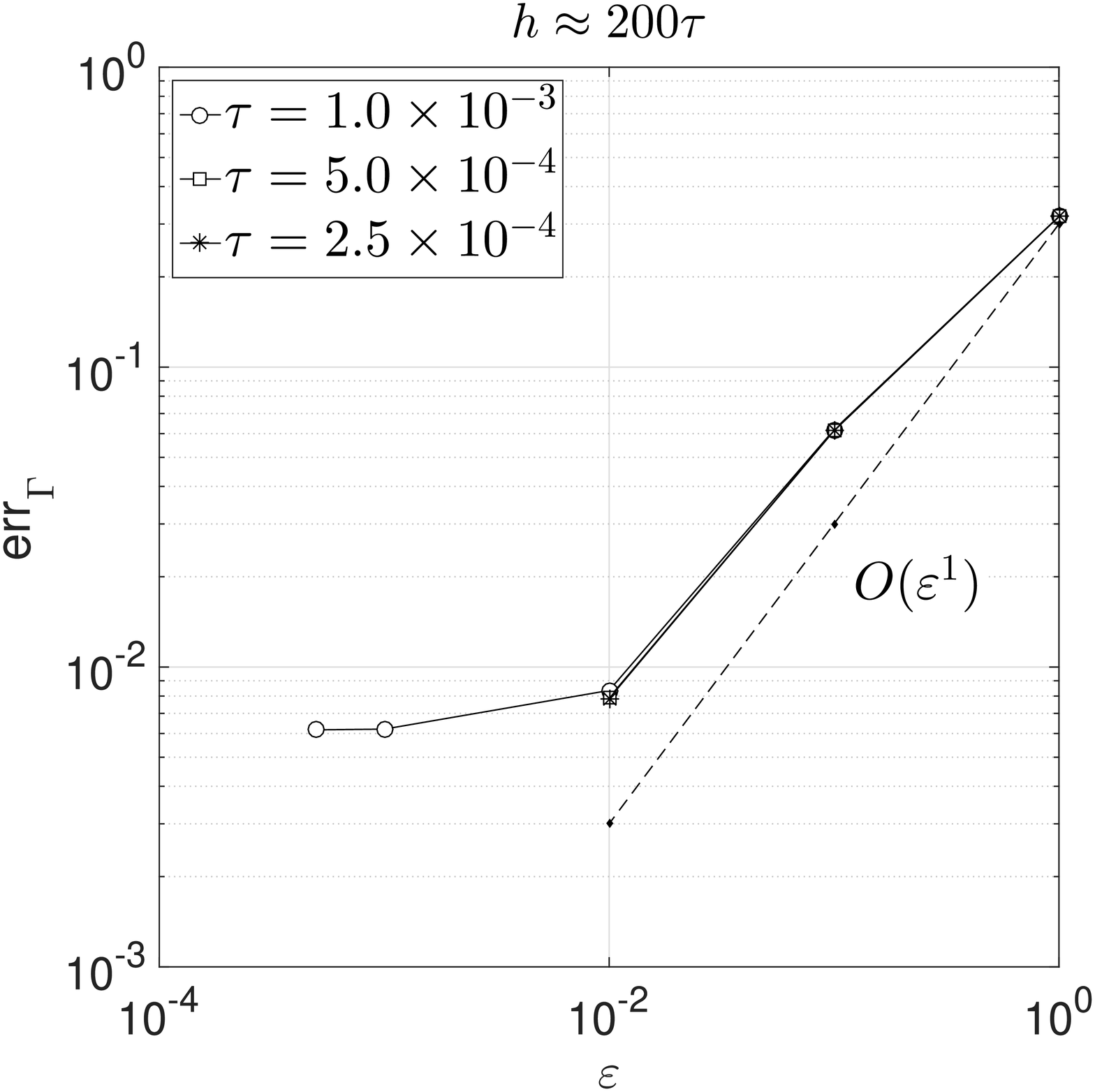}}
                \caption{$\varepsilon$ vs $\operatorname{err}_{\Gamma}$ }
                \label{fig:Fig12b}
        \end{subfigure}%
\caption{Error of convergences when $h \approx 200 \times \tau$}
\label{fig:Fig12}
\end{figure}
%%%%%%%%%%%%%%%%%%%%%%%%%%%%%%%%%%%%%%%%%%%%%%%%%%%%%%%%%%%%
%-----------------------------------------------------------
\section{Some Qualitative Properties of CMM}
\label{sec:properties}
In this section, we state and prove two simple properties of CMM related to the convergence to a stationary point $\Gamma^\ast$ of the moving boundary $\Gamma(t)$, $t \geqslant 0$, under a general description of the normal flow $V_{n}$, and another property we call the $\varepsilon$-approximation property of CMM.
In relation to the former result, let us consider the following abstract autonomous moving boundary problem.
 %%% ABSTRACT MOVING BOUNDARY PROBLEM
\begin{prob}
\label{prob:abstractMBP}
	Given the initial profile $\Gamma_0$ and a real-valued function $F(\,\cdot\,;\Gamma):\Gamma \to \mathbb{R}$, find a moving surface $\Gamma(t)$, $t\geqslant 0$, which satisfies
	%%% ABSTRACT MBP
	\begin{equation}
	\label{eq:abstractMBP}
	\left\{\arraycolsep=1.4pt\def\arraystretch{1.1}
	\begin{array}{rcll}
		V_{n}(x,t)		&=& F(x;\Gamma(t)),		&\quad x \in \Gamma(t), \quad t \geqslant 0,\\[0.5em]
		\Gamma(0)	&=& \Gamma_0.
	\end{array}
	\right.
	\end{equation}
\end{prob}
The particular forms of $F(x;\Gamma(t))$ that are of interest here are as follows:
  \begin{itemize}
    \item $F(x;\Gamma(t)) = (-\nabla u + \bb{\gamma}) \cdot {\nu} + \lambda$ in \eqref{eq:general_Hele-Shaw};
    \vspace{2pt}
    \item $F(x;\Gamma(t)) = - \kappa$ in \eqref{eq:curvature flow}.
\end{itemize}

Next, we define a stationary solution to Problem \ref{prob:abstractMBP}.
\begin{dfn}
  A domain $\Omega^*$ is said to be a \textit{stationary solution} to Problem \ref{prob:abstractMBP} if $\Gamma^* = \partial \Omega^*$, and $F(x;\Gamma^*) = 0$ for almost every $x \in \Gamma^*$.
\end{dfn}

%%% EPSILON REGULARIZE MOVING BOUNDARY PROBLEM
Then, we associate with Problem \ref{prob:abstractMBP} the $\varepsilon$-regularized moving boundary problem given as follows:
\begin{prob}
\label{prob:epsilon_regularized}
	Let $B$ and $\Omega$ be two bounded domains with respective Lipschitz boundary $\partial B$ and $\Gamma:=\partial\Omega$ such that $\overline{B} \subset \Omega$.
	Given the initial profile $\Gamma_0$, a real-valued function $F(\,\cdot\, ;\Gamma) \in L^2(\Gamma)$, and a fix number $\varepsilon > 0$, we seek to find a moving surface $\Gamma(t)$, which satisfies
	%%% EPSILON REGULARIZE MBP
	\begin{equation}
	\label{eq:epsilon_regularized}
	\left\{\arraycolsep=1.4pt\def\arraystretch{1.}
	\begin{array}{rcll}
		- \Delta {\bb{w}} 	&=& \bb{0}	&\quad \text{in $\Omega(t) \setminus \overline{B}$,\quad $t \geqslant 0$},\\
		{\bb{w}} 			&=& \bb{0}	&\quad \text{on $\partial B$},\\
 		\varepsilon \nabla \bb{w} \cdot \nu + {\bb{w}} &=& F(\,\cdot\, ;\Gamma(t)) {\nu}		&\quad \text{on $\Gamma(t)$,\quad $t \geqslant 0$},\\
		V_{n} 			&=& \bb{w} \cdot \nu		&\quad \text{on $\Gamma(t)$,\quad $t \geqslant 0$}.
	\end{array}
	\right.
	\end{equation}
\end{prob}
%%%
With respect to Problem \ref{prob:epsilon_regularized}, a stationary solution $\Omega^*$ is define as follows.
%%%
\begin{dfn}
  A domain $\Omega^*$ is said to be a \textit{stationary solution} to Problem \ref{prob:epsilon_regularized} if $\Gamma^* = \partial \Omega^*$, and ${\bb{w}} \in H^1_{\partial B, \bb{0}}(\Omega^\ast\setminus \overline{B};\mathbb{R}^d)$ satisfies the variational equation.

  \begin{align}
    &\displaystyle \varepsilon \int_{\Omega^\ast \setminus \overline{B}} \nabla {\bb{w}} : \nabla \bb{\varphi} \ {\rm d}x
         + \int_{\Gamma^\ast}  {\bb{w}} \cdot \bb{\varphi}\ {\rm d}s \nonumber\\
      &\displaystyle \hspace{0.75in} = \int_{\Gamma^\ast} F(\cdot;\Gamma ^*) {\nu} \cdot \bb{\varphi}\ {\rm d}s,
            \quad \forall \bb{\varphi} \in H_{\partial B, \bb{0}}^1(\Omega^\ast \setminus \overline{B};\mathbb{R}^d),\label{cmm_weakform}\\
        %%%
%%%			& \bb{w} = \bb{0}\quad \text{on $\partial B$}, \qquad\qquad \text{and}\qquad \bb{w} \cdot \nu = 0 \quad \text{on $\Gamma^\ast$}.\label{cmm_bc}
      & \text{and}\qquad \bb{w} \cdot \nu = 0 \quad \text{on $\Gamma^\ast$}.\label{cmm_bc}
  \end{align}
\end{dfn}

%%%
For Lipschitz domain $\Omega^\ast\setminus \overline{B}$ and $F(\,\cdot\, ;\Gamma) \in L^2(\Gamma)$, the variational problem \eqref{cmm_weakform} can be shown to have a weak solution ${\bb{w}} \in H^1(\Omega^\ast\setminus \overline{B};\mathbb{R}^d)$ via Lax-Milgram lemma.
With the above definition of a stationary point, we now state and prove our first result.
\begin{proposition}
\label{prop:convergence_to_a_stationary_point}
  We suppose $\Omega^* \supset \overline{B}$, $\Gamma^* = \partial \Omega^*$ is Lipschitz, and $F(\,\cdot\, ;\Gamma) \in L^2(\Gamma)$.
  Then, the following conditions are equivalent:
  \begin{enumerate}
    \renewcommand{\labelenumi}{(\roman{enumi})}
    \item $\Omega^*$ is a stationary solution to Problem \ref{prob:abstractMBP},
    \item $\Omega^*$ is stationary solution to Problem \ref{prob:epsilon_regularized}, for any $\varepsilon >0$,
    \item $\Omega^*$ is stationary solution to Problem \ref{prob:epsilon_regularized}, for some $\varepsilon >0$.
  \end{enumerate}
\end{proposition}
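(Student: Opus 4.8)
The plan is to prove the cycle of implications by exploiting the well-posedness of the variational problem \eqref{cmm_weakform} together with a single energy identity obtained by testing against the solution itself. The implication (ii)$\,\Rightarrow\,$(iii) is immediate, since a property holding for every $\varepsilon>0$ in particular holds for some $\varepsilon>0$; thus the real content lies in (i)$\,\Rightarrow\,$(ii) and (iii)$\,\Rightarrow\,$(i). As a preliminary, I would record that the bilinear form $a(\bb{w},\bb{\varphi}) = \varepsilon\int_{\Omega^\ast\setminus\overline{B}}\nabla\bb{w}:\nabla\bb{\varphi}\,{\rm d}x + \int_{\Gamma^\ast}\bb{w}\cdot\bb{\varphi}\,{\rm d}s$ is continuous and coercive on $H^1_{\partial B,\bb{0}}(\Omega^\ast\setminus\overline{B};\mathbb{R}^d)$, coercivity following from the Poincar\'e inequality, which is available because functions in this space vanish on $\partial B$. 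Hence, for each $\varepsilon>0$ and each $F(\,\cdot\,;\Gamma^\ast)\in L^2(\Gamma^\ast)$, the Lax--Milgram lemma (as already noted before the statement) produces a \emph{unique} $\bb{w}$ solving \eqref{cmm_weakform}.

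For (i)$\,\Rightarrow\,$(ii): if $\Omega^\ast$ is a stationary solution to Problem \ref{prob:abstractMBP}, then $F(\,\cdot\,;\Gamma^\ast)=0$ a.e.\ on $\Gamma^\ast$, so the right-hand side of \eqref{cmm_weakform} vanishes identically. By the uniqueness just noted, the solution is $\bb{w}=\bb{0}$, which trivially satisfies $\bb{w}\cdot\nu=0$ on $\Gamma^\ast$, i.e.\ \eqref{cmm_bc} holds. Since none of this reasoning depends on $\varepsilon$, $\Omega^\ast$ is a stationary solution to Problem \ref{prob:epsilon_regularized} for \emph{every} $\varepsilon>0$, which is exactly (ii).

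For (iii)$\,\Rightarrow\,$(i), the heart of the argument: suppose $\Omega^\ast$ is stationary for Problem \ref{prob:epsilon_regularized} for some $\varepsilon>0$, so that the unique solution $\bb{w}$ of \eqref{cmm_weakform} additionally satisfies $\bb{w}\cdot\nu=0$ on $\Gamma^\ast$. I would test \eqref{cmm_weakform} with $\bb{\varphi}=\bb{w}$; the right-hand side then equals $\int_{\Gamma^\ast}F(\,\cdot\,;\Gamma^\ast)\,(\bb{w}\cdot\nu)\,{\rm d}s=0$, leaving $\varepsilon\|\nabla\bb{w}\|_{L^2}^2 + \|\bb{w}\|_{L^2(\Gamma^\ast)}^2=0$, whence $\bb{w}=\bb{0}$. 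Substituting $\bb{w}=\bb{0}$ back into \eqref{cmm_weakform} gives $\int_{\Gamma^\ast}F(\,\cdot\,;\Gamma^\ast)\,\nu\cdot\bb{\varphi}\,{\rm d}s=0$ for all $\bb{\varphi}\in H^1_{\partial B,\bb{0}}(\Omega^\ast\setminus\overline{B};\mathbb{R}^d)$.

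The remaining, and genuinely delicate, step is to pass from this orthogonality to the pointwise conclusion $F=0$, and this is where I expect the main obstacle to lie. I would invoke the fact that, because $\operatorname{dist}(\partial B,\Gamma^\ast)>0$ and $\Gamma^\ast$ is a separate Lipschitz component of the boundary, the trace map restricted to $H^1_{\partial B,\bb{0}}(\Omega^\ast\setminus\overline{B};\mathbb{R}^d)$ is surjective onto $H^{1/2}(\Gamma^\ast;\mathbb{R}^d)$, which is dense in $L^2(\Gamma^\ast;\mathbb{R}^d)$; one may realize any prescribed $\Gamma^\ast$-datum by, e.g., a harmonic extension vanishing on $\partial B$. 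By density the identity extends to all test traces in $L^2(\Gamma^\ast;\mathbb{R}^d)$, forcing $F(\,\cdot\,;\Gamma^\ast)\,\nu=\bb{0}$ and hence $F(\,\cdot\,;\Gamma^\ast)=0$ a.e.\ on $\Gamma^\ast$ (since $|\nu|=1$). Thus $\Omega^\ast$ is a stationary solution to Problem \ref{prob:abstractMBP}, closing the chain (i)$\,\Rightarrow\,$(ii)$\,\Rightarrow\,$(iii)$\,\Rightarrow\,$(i) and establishing the equivalence.
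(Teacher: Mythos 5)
Your proposal is correct and takes essentially the same route as the paper's proof: the trivial implication (ii)$\,\Rightarrow\,$(iii), and for both substantive implications the energy identity obtained by testing \eqref{cmm_weakform} with $\bb{\varphi}=\bb{w}$, concluding $\bb{w}\equiv\bb{0}$ and then that $\int_{\Gamma^\ast}F(\,\cdot\,;\Gamma^\ast)\,{\nu}\cdot\bb{\varphi}\,{\rm d}s=0$ for all admissible $\bb{\varphi}$ forces $F=0$. The only differences are cosmetic: you derive (i)$\,\Rightarrow\,$(ii) from Lax--Milgram uniqueness rather than writing out the energy identity, and you make explicit the trace-surjectivity and $L^2$-density argument behind the final step, which the paper asserts without detail.
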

%%% PROOF OF THE PROPOSITION
\begin{proof}
	Consider equation \eqref{eq:epsilon_regularized} over the stationary shape $\overline{\Omega^\ast}$ with Lipschitz boundary $\Gamma^\ast$.
	For the implication $(i) \Rightarrow (ii)$, we assume that $L^2(\Gamma) \ni F(\,\cdot\, ;\Gamma) = 0$, and we need to show that $\bb{w} \cdot \nu = 0$ on $\Gamma^\ast$.
	To do this, we apply integration by parts to \eqref{cmm_weakform}, and note that $\bb{w} = \bb{0}$ on $\partial B$, to obtain
	\begin{align*}
		0 \leqslant
		\int_{\Gamma^\ast} |\bb{w}|^2 \ {\rm d}s
		= -  \varepsilon  \int_{\Gamma^\ast} \ddn{\bb{w}} \cdot \bb{w} \, {\rm d}s
		= - \varepsilon  \int_{\Omega^\ast \setminus \overline{B}} |\nabla \bb{w}|^2 \, {\rm d}x
		\leqslant 0.
	\end{align*}
	Evidently, $\bb{w} \equiv \bb{0}$ on $\overline{\Omega^{\ast}}$, and in particular, $\bb{w}\cdot {\nu} = 0$ on $\Gamma^\ast$.

  	The proof of the direction $(ii) \Rightarrow (iii)$ is trivial.
	Finally, for the implication $(ii) \Rightarrow (iii)$, we need to prove that if $\bb{w} \cdot \nu = 0$ on $\Gamma^\ast$, where $\bb{w}$ satisfies the system \eqref{eq:epsilon_regularized} on $\overline{\Omega^\ast} \setminus B$, then $F = 0$ on $\Gamma^\ast$.
	In \eqref{cmm_weakform}, we take $\bb{\varphi} = \bb{w} \in H_{\partial B, \bb{0}}^1(\Omega^\ast \setminus \overline{B};\mathbb{R}^d)$ so that we get
	\[
   	\varepsilon \int_{\Omega^\ast \setminus \overline{B}} \nabla {\bb{w}} : \nabla \bb{w} \ {\rm d}x
    		 + \int_{\Gamma^\ast}  |{\bb{w}}|^2\ {\rm d}s \nonumber\\
			= \int_{\Gamma^\ast} F(\,\cdot\, ;\Gamma^*) {\nu}\, \cdot\, \bb{w}\ {\rm d}s = 0.
	\]
	This implies, obviously, that $\bb{w} \equiv \bb{0}$ on $\overline{\Omega^{\ast}}$.
	Going back to \eqref{cmm_weakform}, we see that $\int_{\Gamma^\ast} F(\,\cdot\, ;\Gamma^*) {\nu} \cdot \bb{\varphi}\ {\rm d}s = 0$, for all $\bb{\varphi} \in H_{\partial B, \bb{0}}^1(\Omega^\ast \setminus \overline{B};\mathbb{R}^d)$, from which we conclude that $F = 0$ on $\Gamma^\ast$.
	This proves the assertion. 
\end{proof}
%%%%%%%%%%%%%%%%%%%%%%%%%%%%%%%%%%%%%%%%%%%%%%%%
%%% SECOND PART OF THE SECTION
In the rest of this section, we want to prove what we call the $\varepsilon$-approximation property of CMM.
For this purpose, we again fix $\Omega$ and $B$ and suppose that $\Gamma$ and $\partial B$ are Lipschitz regular.
Given a function $\bb{g} : \Gamma \to \mathbb{R}^d$, our main concern is the convergence of its Robin approximation to an original Dirichlet boundary condition associated with the following Laplace equation with pure Dirichlet boundary condition:
	\begin{equation}
	\label{eq:origsystem}
		- \Delta {\bb{v}} 	= \bb{0}	\quad \text{in $\Omega \setminus \overline{B}$},\qquad
		{\bb{v}} 			= \bb{0}	\quad \text{on $\partial B$},\qquad
		{\bb{v}} 			= \bb{g}	\quad \text{on $\Gamma$}.
	\end{equation}
%%%
For a given data $\bb{g} \in H^{1/2}(\Gamma; \mathbb{R}^d)$ and Lipschitz domain $\Omega \setminus \overline{B}$, it can be shown via Lax-Milgram lemma that the corresponding variational equation of \eqref{eq:origsystem} admits a unique weak solution $\bb{v} \in H^1(\Omega \setminus \overline{B}; \mathbb{R}^d)$.

%%% DISCUSSION OF THE ESTIMATE
	Now, we consider system \eqref{eq:origsystem} and denote its solution, depending on $\bb{g} \in H^{1/2}(\Gamma;\mathbb{R}^d)$, by $\bb{v}^i:=\bb{v}(\bb{g}^i)$.
	Also, we define the Dirichlet-to-Neumann map $\Lambda: H^{1/2}(\Gamma;\mathbb{R}^d) \to H^{-1/2}(\Gamma;\mathbb{R}^d)$.
	Then, we have the following lemma whose proof is given in the Appendix.
	\begin{lemma}
	\label{lem:inner_product}
		The map $(\,\cdot\,,\,\cdot\,)_{\Lambda} :  H^{1/2}(\Gamma;\mathbb{R}^d) \times  H^{1/2}(\Gamma;\mathbb{R}^d) \to \mathbb{R}$ defined as $(\bb{g}^1, \bb{g}^2)_{\Lambda} := (\Lambda\bb{g}^1, \bb{g}^2)_{L^2(\Gamma;\mathbb{R}^d)}$, for $\bb{g}^1, \bb{g}^2 \in H^{1/2}(\Gamma;\mathbb{R}^d)$, is an inner product on $H^{1/2}(\Gamma;\mathbb{R}^d)$, and is equivalent to the usual norm on $H^{1/2}(\Gamma;\mathbb{R}^d)$.
	\end{lemma}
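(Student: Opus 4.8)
The plan is to realize the form $(\cdot,\cdot)_{\Lambda}$ as the Dirichlet energy of the harmonic extensions, which renders the algebraic properties transparent and reduces the genuine analytic content to the trace theorem and a Poincar\'e inequality. Recall that, by definition, for $\bb{g}^i \in H^{1/2}(\Gamma;\mathbb{R}^d)$ the field $\bb{v}^i := \bb{v}(\bb{g}^i)$ is the unique weak solution of \eqref{eq:origsystem}, and the Neumann trace $\Lambda \bb{g}^i = \ddn{\bb{v}^i} \in H^{-1/2}(\Gamma;\mathbb{R}^d)$ is characterized weakly by $\langle \Lambda \bb{g}^i, \bb{\psi}\rangle = \int_{\Omega\setminus\overline{B}} \nabla \bb{v}^i : \nabla \bb{\Psi}\, {\rm d}x$ for any $\bb{\Psi} \in H^1_{\partial B,\bb{0}}(\Omega\setminus\overline{B};\mathbb{R}^d)$ with $\bb{\Psi}|_{\Gamma} = \bb{\psi}$. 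First I would take $\bb{\Psi} = \bb{v}^2$, which is an admissible extension of $\bb{g}^2$, to obtain the key identity
\begin{equation*}
(\bb{g}^1, \bb{g}^2)_{\Lambda} = \int_{\Omega\setminus\overline{B}} \nabla \bb{v}^1 : \nabla \bb{v}^2 \, {\rm d}x .
\end{equation*}
Bilinearity is then inherited from the linearity of $\bb{g}\mapsto\bb{v}(\bb{g})$ and the bilinearity of the Dirichlet form, while symmetry is immediate since the right-hand side is symmetric in $\bb{v}^1,\bb{v}^2$.

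Next I would establish positive definiteness. Setting $\bb{g}^1 = \bb{g}^2 = \bb{g}$ gives $(\bb{g},\bb{g})_{\Lambda} = \|\nabla \bb{v}\|_{L^2}^2 \geqslant 0$. If this quantity vanishes, then $\nabla \bb{v} = \bb{0}$, so $\bb{v}$ is componentwise constant on each connected component of the annular domain $\Omega\setminus\overline{B}$; since $\bb{v} = \bb{0}$ on $\partial B$ and each component of $\Omega\setminus\overline{B}$ meets $\partial B$, this forces $\bb{v}\equiv\bb{0}$, whence $\bb{g} = \bb{v}|_{\Gamma} = \bb{0}$. Hence $(\cdot,\cdot)_{\Lambda}$ is a genuine inner product.

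For the norm equivalence I would treat the two inequalities separately. For the upper bound I use that $\bb{v}$ minimizes the Dirichlet energy among all $H^1_{\partial B,\bb{0}}$ fields with trace $\bb{g}$ on $\Gamma$; comparing with a bounded lifting $E\bb{g}$ furnished by the trace theorem yields $\|\bb{g}\|_{\Lambda}^2 = \|\nabla\bb{v}\|_{L^2}^2 \leqslant \|\nabla E\bb{g}\|_{L^2}^2 \leqslant C\|\bb{g}\|_{H^{1/2}(\Gamma)}^2$. For the lower bound I combine a Poincar\'e inequality (valid because $\bb{v}$ vanishes on the set $\partial B$ of positive surface measure) with the continuity of the trace operator to get $\|\bb{g}\|_{H^{1/2}(\Gamma)} = \|\bb{v}\|_{H^{1/2}(\Gamma)} \leqslant C\|\bb{v}\|_{H^1(\Omega\setminus\overline{B})} \leqslant C\|\nabla\bb{v}\|_{L^2} = C\|\bb{g}\|_{\Lambda}$. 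Together these give the asserted equivalence.

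The main obstacle, I expect, is not the algebra but the careful justification of the weak Green identity and of the two functional-analytic inputs on a mere Lipschitz domain: namely, that the Neumann trace is well defined in $H^{-1/2}(\Gamma;\mathbb{R}^d)$, and that the Poincar\'e inequality holds on $H^1_{\partial B,\bb{0}}(\Omega\setminus\overline{B};\mathbb{R}^d)$. The former requires realizing $\Lambda$ through the variational formulation rather than as a pointwise normal derivative, and the latter relies on the connectedness of $\Omega\setminus\overline{B}$ so that the only energy-free field vanishing on $\partial B$ is trivial; these are precisely the points where the geometric hypotheses $\overline{B}\subset\Omega$ and the Lipschitz regularity enter.
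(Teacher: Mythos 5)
Your proof is correct and follows essentially the same route as the paper's: both hinge on realizing $(\bb{g}^1,\bb{g}^2)_{\Lambda}$ as the Dirichlet energy pairing $\int_{\Omega\setminus\overline{B}}\nabla\bb{v}^1:\nabla\bb{v}^2\,{\rm d}x$ via Green's identity, from which bilinearity, symmetry, and positive semi-definiteness are immediate. You in fact go further than the paper at the two places where it is terse --- the paper merely asserts the point-separating property and the norm equivalence for Lipschitz $\Gamma$ --- whereas your connectedness argument (every component of $\Omega\setminus\overline{B}$ meets $\partial B$, so $\nabla\bb{v}=\bb{0}$ forces $\bb{v}\equiv\bb{0}$), together with the Dirichlet-principle upper bound and the Poincar\'e--trace lower bound, supplies exactly those missing details.
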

	%%%
	Now, for $\varepsilon > 0$ and $\bb{g}\in H^{1/2}(\Gamma;\mathbb{R}^d)$, we define $\bb{g}_{\varepsilon}$ such that
		$\varepsilon \Lambda \bb{g}_{\varepsilon}  +  \bb{g}_{\varepsilon} = \varepsilon \nabla {\bb{v}_{\varepsilon}} \cdot \nu + \bb{v}_{\varepsilon}  =: \bb{g}$,
	and consider the boundary value problem \eqref{eq:origsystem} with $\bb{v}$ and $\bb{g}$ replaced by $\bb{v}_{\varepsilon}$ and $\bb{g}_{\varepsilon}$, respectively, and, instead of the Dirichlet condition, we imposed on $\Gamma$ the Robin condition $\varepsilon \nabla {\bb{v}_{\varepsilon}} \cdot \nu + \bb{v}_{\varepsilon} = \bb{g}$.
	More precisely, we consider the mixed Dirichlet-Robin problem
	\begin{equation}
	\label{eq:epssystem}
		- \Delta {\bb{v}_{\varepsilon}} 	= \bb{0}	\quad \text{in $\Omega \setminus \overline{B}$},\quad
		{\bb{v}_{\varepsilon}} 		= \bb{0}	\quad \text{on $\partial B$},\quad
		\varepsilon \nabla \bb{v}_{\varepsilon} \cdot \nu + \bb{v}_{\varepsilon} = \bb{g} 	\quad \text{on $\Gamma$}.
	\end{equation}
	%%%
	Let us define the bilinear form $a^\varepsilon(\,\cdot\,, \, \cdot \,)$ as follows:
	\[
		a^\varepsilon(\bb{\varphi}, \bb{\psi}):= \ _{H^{-1/2}}\langle (\varepsilon \Lambda + \bb{I})\bb{\varphi}, \bb{\psi} \rangle_{H^{1/2}}
			= \varepsilon(\bb{\varphi}, \bb{\psi})_{\Lambda} + (\bb{\varphi}, \bb{\psi} )_{L^2(\Gamma;\mathbb{R}^d)}.
	\]
	Then, we may write a weak formulation on $\Gamma$ for $\bb{g}^\varepsilon$ as follows:
	find $\bb{g}^\varepsilon \in H^{1/2}(\Gamma;\mathbb{R}^d)$ such that
	\begin{equation}
	\label{eq:weak_form_on_gamma}
		a^\varepsilon(\bb{g}^\varepsilon, \bb{\varphi}) = (\bb{g}, \bb{\varphi})_{L^2(\Gamma;\mathbb{R}^d)},\qquad \text{for all $\bb{\varphi} \in H^{1/2}(\Gamma;\mathbb{R}^d)$}.
	\end{equation}
	Again, the existence of unique weak solution $\bb{g}^\varepsilon\in H^{1/2}(\Gamma;\mathbb{R}^d)$ to the above variational problem can be proven using Lax-Milgram lemma.
%%%

We now exhibit our second convergence result in the following proposition which simply states the convergence of the Robin approximation to the original Dirichlet data in $L^2(\Gamma)$ sense as the parameter $\varepsilon$ goes to zero provided that the Neumann data $\Lambda \bb{g}$ is square integrable.
\begin{proposition}
\label{prop:epsilon_approximation}
Let $\bb{g} \in  H^{1/2}(\Gamma;\mathbb{R}^d)$ and $\Gamma$ be Lipschitz regular.
If $\Lambda \bb{g} \in L^2(\Gamma;\mathbb{R}^d)$, then the following estimate holds
$\left\| \bb{g}^\varepsilon -  \bb{g} \right\|_{L^2(\Gamma;\mathbb{R}^d)}
			\leqslant \varepsilon \left\| \Lambda \bb{g} \right\|_{L^2(\Gamma;\mathbb{R}^d)}$.
\end{proposition}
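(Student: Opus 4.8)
The plan is to work entirely with the weak formulation \eqref{eq:weak_form_on_gamma} defining $\bb{g}^\varepsilon$, and to test it against the error $\bb{g}^\varepsilon - \bb{g}$ itself. The one structural fact I would invoke is Lemma \ref{lem:inner_product}: since $(\,\cdot\,,\,\cdot\,)_\Lambda$ is a genuine inner product, it is in particular nonnegative, $(\bb{\varphi},\bb{\varphi})_\Lambda \geqslant 0$ for every $\bb{\varphi}\in H^{1/2}(\Gamma;\mathbb{R}^d)$, and by definition $(\bb{g}^1,\bb{g}^2)_\Lambda = (\Lambda\bb{g}^1,\bb{g}^2)_{L^2(\Gamma;\mathbb{R}^d)}$.

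First I would subtract the Dirichlet datum. Recalling $a^\varepsilon(\bb{g}^\varepsilon,\bb{\varphi}) = \varepsilon(\bb{g}^\varepsilon,\bb{\varphi})_\Lambda + (\bb{g}^\varepsilon,\bb{\varphi})_{L^2(\Gamma;\mathbb{R}^d)}$ and inserting $\bb{g}^\varepsilon = (\bb{g}^\varepsilon - \bb{g}) + \bb{g}$ into \eqref{eq:weak_form_on_gamma}, the zeroth-order term $(\bb{g},\bb{\varphi})_{L^2(\Gamma;\mathbb{R}^d)}$ cancels the right-hand side, leaving
\[
\varepsilon(\bb{g}^\varepsilon - \bb{g},\bb{\varphi})_\Lambda + (\bb{g}^\varepsilon - \bb{g},\bb{\varphi})_{L^2(\Gamma;\mathbb{R}^d)} = -\,\varepsilon(\bb{g},\bb{\varphi})_\Lambda, \qquad \forall\, \bb{\varphi}\in H^{1/2}(\Gamma;\mathbb{R}^d).
\]
This is the step where the hypothesis $\Lambda\bb{g}\in L^2(\Gamma;\mathbb{R}^d)$ becomes essential: it lets me rewrite the right-hand side as $-\varepsilon(\Lambda\bb{g},\bb{\varphi})_{L^2(\Gamma;\mathbb{R}^d)}$, an honest $L^2$ pairing rather than an $H^{-1/2}$–$H^{1/2}$ duality, so that a Cauchy--Schwarz estimate in $L^2$ becomes available.

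Then I would take $\bb{\varphi} = \bb{g}^\varepsilon - \bb{g}$, which is admissible since it lies in $H^{1/2}(\Gamma;\mathbb{R}^d)$. Discarding the nonnegative term $\varepsilon(\bb{g}^\varepsilon - \bb{g},\bb{g}^\varepsilon - \bb{g})_\Lambda \geqslant 0$ on the left and applying Cauchy--Schwarz on the right yields
\[
\left\|\bb{g}^\varepsilon - \bb{g}\right\|_{L^2(\Gamma;\mathbb{R}^d)}^2 \leqslant \varepsilon\,\left\|\Lambda\bb{g}\right\|_{L^2(\Gamma;\mathbb{R}^d)}\left\|\bb{g}^\varepsilon - \bb{g}\right\|_{L^2(\Gamma;\mathbb{R}^d)}.
\]
Dividing by $\|\bb{g}^\varepsilon - \bb{g}\|_{L^2(\Gamma;\mathbb{R}^d)}$ (the estimate being trivial when this quantity vanishes) gives the claimed bound.

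I do not expect a serious obstacle; the argument is short once the formulation is in place. The only point requiring care---and the reason the statement carries the extra regularity assumption---is precisely the passage from the duality pairing to the $L^2$ inner product on the right-hand side, which is exactly where $\Lambda\bb{g}\in L^2(\Gamma;\mathbb{R}^d)$ is used, and without which the right-hand side could only be controlled in the weaker $H^{-1/2}$ norm. I would also note that only the positivity of $(\,\cdot\,,\,\cdot\,)_\Lambda$ from Lemma \ref{lem:inner_product} is needed here; its symmetry plays no role in this one-sided estimate.
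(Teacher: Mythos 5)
Your proof is correct and is essentially identical to the paper's own: both arguments test \eqref{eq:weak_form_on_gamma} with $\bb{\varphi} = \bb{g}^\varepsilon - \bb{g}$ to obtain $a^\varepsilon(\bb{g}^\varepsilon - \bb{g}, \bb{g}^\varepsilon - \bb{g}) = -\varepsilon(\bb{g}, \bb{g}^\varepsilon - \bb{g})_\Lambda$, discard the nonnegative $\varepsilon(\,\cdot\,,\,\cdot\,)_\Lambda$ term on the left using the positivity from Lemma \ref{lem:inner_product}, invoke $\Lambda\bb{g} \in L^2(\Gamma;\mathbb{R}^d)$ to treat the right-hand side as an honest $L^2$ pairing, and conclude via Cauchy--Schwarz and division by $\left\| \bb{g}^\varepsilon - \bb{g} \right\|_{L^2(\Gamma;\mathbb{R}^d)}$. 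Your closing observations---that the $L^2$ regularity of $\Lambda\bb{g}$ is needed precisely to upgrade the duality pairing to an $L^2$ inner product, and that only positivity (not symmetry) of $(\,\cdot\,,\,\cdot\,)_\Lambda$ is used---are accurate and consistent with the paper's reasoning.
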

%%%
\begin{proof}
	Taking the test function in \eqref{eq:weak_form_on_gamma} as $\bb{\varphi}:= \bb{g}^\varepsilon -  \bb{g} \in H^{1/2}(\Gamma;\mathbb{R}^d)$ gives us the following sequence of equations:
		$a^\varepsilon(\bb{g}^\varepsilon -  \bb{g}, \bb{g}^\varepsilon -  \bb{g})
		= (\bb{g}, \bb{g}^\varepsilon -  \bb{g})_{L^2(\Gamma;\mathbb{R}^d)}
				- \varepsilon (\bb{g}, \bb{g}^\varepsilon -  \bb{g})_{\Lambda}
					- (\bb{g}, \bb{g}^\varepsilon -  \bb{g})_{L^2(\Gamma;\mathbb{R}^d)}
		= - \varepsilon (\bb{g}, \bb{g}^\varepsilon -  \bb{g})_{\Lambda}$.
	This gives us the estimate
	$\left\| \bb{g}^\varepsilon -  \bb{g} \right\|^2_{L^2(\Gamma;\mathbb{R}^d)}
			\leqslant - \varepsilon (\bb{g}, \bb{g}^\varepsilon -  \bb{g})_{\Lambda}$.
	Furthermore, if $\Lambda \bb{g} \in L^2(\Gamma;\mathbb{R}^d)$, then we can write this inequality as
	$\left\| \bb{g}^\varepsilon -  \bb{g} \right\|_{L^2(\Gamma;\mathbb{R}^d)}
			\leqslant \varepsilon \left\| \Lambda \bb{g} \right\|_{L^2(\Gamma;\mathbb{R}^d)}$, as desired.
\end{proof}
%-----------------------------------------------------------
\section{Conclusion}
\label{sec:conclusion}
We have developed a finite element scheme we called the `comoving mesh method' or CMM for solving certain families of moving boundary problems.
We applied the proposed scheme in solving the classical Hele-Shaw problem and the exterior Bernoulli free boundary problem.
In the latter case, we found that the generalized Hele-Shaw problem with normal velocity flow $V_{n} = -\nabla u \cdot {\nu} + \lambda$, where $\lambda <0$ converges to a stationary point which coincides with the optimal shape solution of the said free boundary problem.
We have also demonstrated the applicability of CMM in solving a moving boundary problem involving the mean curvature flow equation $V_{n} = -\kappa$.
The numerical experiments performed here showed that the experimental order of convergence of the approximate solutions obtained using CMM are mostly linear for both the Hele-Shaw problem and the mean curvature problem.
In case of the former problem, this linear order of convergence was seen for time step sizes that is as large as the mesh size value.
On the other hand, for the mean curvature problem, it was observed that the magnitude of the time step-size has to be well less than the width of the mesh in order for the numerical scheme to be stable and obtained a (nearly) linear order of convergence with respect to the parameter $\varepsilon$ against the boundary shape error.
Finally, we have also presented two simple properties of CMM pertaining to its stationary solution and a convergence result regarding the $\varepsilon$-approximation of $V_{n}$.

In our next investigation, we will apply the method in solving more general moving boundary problems such as the Stefan problem and the two-phase Navier-Stokes equations.
Moreover, we want to treat the Gibbs-Thomson law which assumes the condition $u=\sigma \kappa$ on the moving boundary.

\appendix
\section{Proof of Lemma \ref{lem:inner_product}}
Let us now prove Lemma \ref{lem:inner_product}.
\begin{proof}
	%%% DISCUSSION OF THE ESTIMATE
	Consider system \eqref{eq:origsystem} whose solution is given by $\bb{v}^i:=\bb{v}(\bb{g}^i)$.
	Also, consider the Dirichlet-to-Neumann map $\Lambda: H^{1/2}(\Gamma;\mathbb{R}^d) \to H^{-1/2}(\Gamma;\mathbb{R}^d)$.
	\sloppy Then, for $\bb{g}^1, \bb{g}^2 \in H^{1/2}(\Gamma;\mathbb{R}^d)$, the binary operation $(\bb{g}^1, \bb{g}^2)_{\Lambda} := (\Lambda\bb{g}^1, \bb{g}^2)_{L^2(\Gamma;\mathbb{R}^d)}$ is an inner product on $H^{1/2}(\Gamma;\mathbb{R}^d)$.
	Indeed, we have the following arguments
	\begin{description}
		\item[(i)] since, for any $\bb{g}^3 \in H^{1/2}(\Gamma;\mathbb{R}^d)$ and
			$c \in \mathbb{R}$, we have $(\Lambda (c \bb{g}^1 + \bb{g}^2), \bb{g}^3)_{L^2(\Gamma;\mathbb{R}^d)}
				= \int_{\Gamma} (c \nabla \bb{v}^1 + \nabla \bb{v}^2)\cdot \nu\, \bb{v}^3\, {\rm d}s
				= c (\Lambda \bb{g}^1, \bb{g}^3)_{L^2(\Gamma;\mathbb{R}^d)} + (\Lambda \bb{g}^2, \bb{g}^3)_{L^2(\Gamma;\mathbb{R}^d)}
			$, then $(\, \cdot \,, \, \cdot \,)_{\Lambda}$ is linear with respect to its first argument;
		%%%
		\item[(ii)] the binary operation $(\, \cdot \,, \, \cdot \,)_{\Lambda}$ is positive definite because, for any $\bb{g}^2 \in H^{1/2}(\Gamma;\mathbb{R}^d)$, we have $(\Lambda \bb{g}, \bb{g})_{L^2(\Gamma;\mathbb{R}^d)} = \int_{\Gamma} (\nabla \bb{v} \cdot \nu) \bb{v}\ {\rm d}s = \int_{\overline{\Omega}\setminus B} |\nabla \bb{v}|^2  \ {\rm d}x \geqslant 0$;
		%%%
		\item[(iii)] also, it is point-separating, that is $(\Lambda \bb{g}, \bb{g})_{L^2(\Gamma;\mathbb{R}^d)} = 0$ if and only if $\bb{g} \equiv \bb{0}$; and,
		%%%
		\item[(iv)] lastly, the operation is symmetric because
			$(\Lambda \bb{g}^1, \bb{g}^2)_{L^2(\Gamma;\mathbb{R}^d)}
				= \int_{\Gamma} (\nabla \bb{v}^1 \cdot \nu) \bb{v}^2\ {\rm d}s
					= \int_{\overline{\Omega}\setminus B} \nabla \bb{v}^1 : \nabla \bb{v}^2  \ {\rm d}x
						= \int_{\Gamma} \bb{v}^1 (\nabla \bb{v}^2 \cdot \nu) \ {\rm d}s
							= ( \bb{g}^1, \Lambda \bb{g}^2)_{L^2(\Gamma;\mathbb{R}^d)}$.
	\end{description}
	Additionally, for Lipschitz $\Gamma$, the inner product $(\, \cdot \,, \, \cdot \,)_{\Lambda}$ is equivalent to the natural one in $H^{1/2}(\Gamma;\mathbb{R}^d)$.
	Here, $H^{1/2}(\Gamma;\mathbb{R}^d)$ is viewed as the image of the trace operator $\gamma_{\Gamma}$ on $\Gamma$ (i.e., $\operatorname{Im}(\gamma_{\Gamma}) = \gamma_{\Gamma}(H^1(\Omega;\mathbb{R}^d))$).
	Consequently, by Riesz representation theorem, together with the embedding $H^{-1/2}(\Gamma;\mathbb{R}^d) \supset L^2(\Gamma;\mathbb{R}^d) \supset H^{1/2}(\Gamma;\mathbb{R}^d)$, we conclude that $\Lambda \in \operatorname{Isom}(H^{1/2}(\Gamma;\mathbb{R}^d),H^{-1/2}(\Gamma;\mathbb{R}^d))$.
	This proves the lemma.
\end{proof}

%-----------------------------------------------------------

%-----------------------------------------------------------

\end{document}